\newcommand{\Gal}{\mbox{Gal}}
\newcommand{\id}{\mbox{id}}
\newcommand{\wesley}[1]{\bigskip \textcolor{blue}{\textbf{$\clubsuit \clubsuit \clubsuit$ Wesley:} #1} \bigskip}
\theoremstyle{plain}
\newtheorem{theorem}{Theorem}[section]
\newtheorem{lemma}[theorem]{\bf Lemma}
\newtheorem{corollary}[theorem]{\bf Corollary}
\newtheorem{proposition}[theorem]{\bf Proposition}
\theoremstyle{definition}
\newtheorem{definition}[theorem]{\bf Definition}
\newtheorem{question}[theorem]{\bf Question}
\newtheorem{conjecture}[theorem]{\bf Conjecture}
\newtheorem{example}[theorem]{\bf Example}
\theoremstyle{remark}
\newtheorem{remark}[theorem]{\bf Remark}
\newtheorem{comment}{\bf Comment}
\newcommand{\calB}{{\mathcal B}}
\newcommand{\N}{{\mathbb N}}
\newcommand{\Q}{{\mathbb Q}}
\newcommand{\R}{{\mathbb R}}
\newcommand{\Z}{{\mathbb Z}}
\newcommand{\be}{\begin{enumerate}}
\newcommand{\ee}{\end{enumerate}}
\newcommand{\rat}{\Q}
\newcommand\cyr{%
\renewcommand\rmdefault{wncyr}%
\renewcommand\sfdefault{wncyss}%
\renewcommand\encodingdefault{OT2}%
\normalfont 
\selectfont}
\DeclareTextFontCommand{\textcyr}{\cyr}
\title{Computability in infinite Galois theory and algorithmically random algebraic fields}
\author{Wesley Calvert}
\address{School of Mathematical and Statistical Sciences, Mail Code 4408\\ 1245 Lincoln Drive\\ Southern Illinois University\\ Carbondale, Illinois 62901}
\email{wcalvert@siu.edu}
\author{Valentina Harizanov}
\address{Department of Mathematics\\ The George Washington University\\ Washington, DC 20052}
\email{harizanv@gwu.edu}
\author{Alexandra Shlapentokh}
\address{Department of mathematics, East Carolina University, Greenville, NC 27858}
\email{shlapentokha@ecu.edu}
\keywords{Algorithmic randomness, algebraic field, infinite Galois theory}
\thanks{This material is partially based upon work supported by the National Science Foundation under grant DMS-1928930 while all three authors participated in a program hosted by the Mathematical Sciences Research Institute in Berkeley, California during Fall 2020 and Summer 2022.  The first author was partially supported by a Fulbright-Nehru Senior Research Fellowship.  The second author was partially supported by NSF FRG grant DMS-2152095.  The third author was partially supported by NSF FRG grant DMS-2152098.  The authors are grateful to Philip Dittmann for helpful comments, and to the anonymous referees for a very careful reading.
}
\begin{document}
\maketitle

\begin{abstract} We introduce a notion of algorithmic randomness for algebraic fields.  We prove the existence of a continuum of algebraic extensions of $\rat$ that are random according to our definition.  We show that there are noncomputable algebraic fields which are not random. We also partially characterize the index set, relative to an oracle, of the set of random algebraic fields computable relative to that oracle.

In order to carry out this investigation of randomness for fields, we develop computability in the context of infinite Galois theory (where the relevant Galois groups are uncountable), including definitions of computable and computably enumerable Galois groups and computability of Haar measure on the Galois groups.
\end{abstract}

\section{Introduction}
We can often understand a class of structures better by understanding the ``typical" elements of the class.  In this paper, we propose a definition of a typical algebraic field.  Intuitively, we would expect that a typical object is the most likely result of a selection at random from the class, perhaps with respect to some probability distribution.  Thus, we refer to our typical fields as "random fields."

The study of random elements has been carried out for many classes of objects.  Perhaps the most well-developed and fruitful example of such a study is the case of random graphs \cite{ErdosRenyi1960,ErdosRenyiRG1,BollobasRG}.  While the randomness discussed in that work was not algorithmic, it demonstrates the potential benefits of the investigation of random elements.  In a benchmark example of the effectiveness of this technique, Pinsker used random graphs to answer a major question about the existence of a class of graphs (where the original problem made no mention of randomness) \cite{Pinsker1973}.  This leads us to hope that understanding random fields might lead to new insights about all fields.

There has also been work on random groups (including the work described in \cite{Gromov,CordesEtAl2018} and \cite{FranklinHoKnight2022}, and random structures more generally \cite{Fouche2013,GlasnerWeiss2002,Khoussainov2016,Khoussainov2014,HTKhoussainovTuretsky2019}.  These papers, collectively, give many definitions of typical elements of many classes of structures. It appears, however, that it is more difficult to make a meaningful definition of a typical element when discussing classes of structures that exhibit greater complexity.  

It turns out that there is a combinatorial property (trivial definable closure) that determines whether a class of structures will have a single isomorphism type representing a typical element (in the sense of a suitably invariant probability measure on the class concentrating on that isomorphism type) \cite{AFP2016,AFKP2017,AFP2019}.  Graphs have this property, but groups and, critically for this paper, fields, do not.


In view of this, the problem of describing a typical, or random, field, is more complicated than the analogous problem for graphs.  In this paper we will give a definition that we argue captures the notion of typicality on algebraic fields.

To achieve our results on random fields, we develop computability theory for absolute Galois groups, based on Krull topology.  Galois groups of many infinite fields are uncountable and therefore the usual notions of computability are not applicable in this context. However, the graphs of absolute Galois groups of countable fields (defined in Section \ref{SecCompGalois}) are countable and therefore can be used to define a computable absolute Galois group.  The fixed field of a Galois group which is computable in this sense turns out to be computable in the usual way confirming that our definition is a reasonable one. Things become more complicated when it is necessary to define c.e. absolute Galois groups that are not computable.  The reader can find this discussion in Section \ref{SecCompGalois}.

To make use of our definition of a computable absolute Galois group we had to make sure that the computability of a group and its fixed field extends to various properties of the field in an expected manner. The technical results pertaining to these matters can be found in Sections \ref{backgdcomp} and \ref{HaarSec} and especially in Section \ref{SecCompGalois}.

For reasons above, this paper combines results from computability and computable structure theory, algorithmic randomness, and infinite Galois theory. As we point out below, we included some basic facts from these areas to make the paper readable by a wider audience. 

In this paper we state our results in the language of algebraic extensions of $\Q$ to simplify the presentation.  Since $\Q$ has a splitting algorithm, we can construct a computable copy $\overline{\Q}$ of an algebraic closure in which the domain of $\Q$ is a computable subset.  However, our work actually shows a good deal more.  Everything in the present paper applies to any computable field with a splitting algorithm, e.g.,\ a countable function field.

\subsection{The structure of the paper}
Because we hope to accommodate readers from diverse backgrounds, we have attempted to make the paper self-contained.  In Section \ref{backgdcomp}, we review standard background in computability. Section \ref{FixedACl} introduces an important assumption used throughout the paper.  Section \ref{HaarSec} gives a similar background on Haar measure and absolute Galois groups, with Section \ref{compmeas} describing computability in this context.

Section \ref{SecCompGalois} begins the technical heart of the paper, establishing important relationships between the computable structure theory of algebraic fields and that of their absolute Galois groups.  This section establishes the vocabulary that allows Section \ref{SecRand} to finally define random fields, prove their existence, and make some preliminary investigations into their characterization and properties.  Section \ref{SecIdx} addresses the index set problem for random fields.

\section{Background on Computability}\label{backgdcomp}
\subsection{Computable and c.e.\ sets and computable functions}\label{SecComputability}
We follow the standard notations and definitions of computability theory.   Good general references for this section are \cite{soare,weber,ashknight}.
\subsubsection{Computable and computably enumerable sets}
A set $C$ of natural numbers (or of tuples of natural numbers) is \emph{computable} if there is a decision procedure for identifying its elements. That is, there is a Turing machine that on an input $x$ always halts and outputs $1$ if $x \in C$ and outputs $0$ if $x \notin C.$ Computable sets encode decidable problems (that is, problems that can be solved algorithmically). All finite sets are computable.  Appealing to Church's Thesis, we say that a function is computable if and only if there is an algorithm to compute it. 

A set $E$ of natural numbers is  \emph{computably enumerable} (abbreviated by c.e.) if $E$ is empty or there is a computable unary function $f$ such that 
\[
E =\text{ran}(f) =\{f(0) ,f(1) ,\ldots \}.
\]
Hence every computable set is c.e.. For an infinite c.e.\ set an enumeration $f(0),f(1),\ldots $ can be modified by eliminating repetitions to be one-to-one.  A classical result in computability shows there are c.e.\ sets that are not computable.

A partial function $\psi$ is \emph{partial computable} if there is a Turing machine that on every input in the domain of $\psi$ halts and outputs its value, while on every input that is not in the domain of $\psi$ it computes forever. It can be shown that a set $E$ is c.e.\ if and only if it is the domain of a partial computable function $\psi, $ i.e.,\ $E =\text{dom}(\psi ).$ Moreover, a set $E$ is c.e.\ if and only there is a computable binary relation $R$ such that for every $x$ it is the case that
\begin{center}
$x \in E \Leftrightarrow ( \exists y)R (x ,y)\text{.}$
\end{center}\par

All c.e.\ sets can be simultaneously algorithmically enumerated by effectively enumerating all Turing machines. In other words, there is is a computable enumeration of all unary partial computable functions and their domains.

Clearly, the complement of a computable set is computable. On the other hand, there are c.e.\ sets --- exactly the noncomputable c.e.\ sets --- with complements that are not c.e..  A well-studied example of a noncomputable c.e.\ set is the diagonal halting set $K$.  The set $K$ consists of all inputs $e$ on which the Turing machine with index $e$ (or the $e$-th computable function $\varphi_e$) halts. That is,
\[
K =\{e :e \in W_{e}\}=\{e: \varphi_e(e) \mbox{ halts}\},
\]
where $W_e$ is the c.e.\ set that is the domain of the function $\varphi_e$ computed by the Turing machine with index $e$.

\subsubsection{Arithmetical hierarchy} We have the following classification of subsets of natural numbers and, more generally, of relations on natural numbers.  $\Pi _{n}^{0}$ and $\Sigma _{n}^{0}$ sets (relations) are levels in the \emph{arithmetical hierarchy} obtained from computable relations by applying existential and universal quantifiers. More precisely, a set $A$ is $\Sigma _{0}^{0} =\Pi _{0}^{0}$ if it is computable. For $n >0 ,$ a set $A$ is $\Sigma _{n}^{0}$ if there is a computable $(n +1)$-ary relation $R$ such that for every $a \in \mathbb{N}$, \medskip 

\begin{center}
$a \in A \Leftrightarrow ( \exists x_{0})( \forall x_{1}) \cdots (Q x_{n -1}) R (a ,x_{0} ,x_{1} ,\ldots  ,x_{n -1})\text{}\text{}$, \medskip 
\end{center}\par
\noindent where $Q$ is $ \exists $ if $n$ is an odd number, and $Q$ is $ \forall $ if $n$ is an even number. 

$\Pi _{n}^{0}$ sets are defined similarly starting with the universal quantifier.  Clearly, the complement of a $\Sigma _{n}^{0}$ set is a $\Pi _{n}^{0}$ set and vice versa.  We say that a set is $\Delta _{n}^{0}$ if it is both $\Sigma _{n}^{0}$ and $\Pi _{n}^{0}$. It follows that $\Sigma _{1}^{0}$ sets are the c.e.\ sets, $\Pi _{1}^{0}$ sets are the co-c.e.\ sets, and  $\Delta _{0}^{0} =\Delta _{1}^{0}$ sets are the computable sets. A set is called \emph{arithmetical} if it is $\Pi _{m}^{0}$ for some $m$ (or $\Sigma _{m}^{0}$ for some $m$).  

The superscript $0$ in the notation $\Pi^0_n$ and $\Sigma^0_n$ indicates that all quantifiers will range only over natural numbers (and elements of structures that have been indexed by natural numbers).  A superscript of $1$ or more would indicate quantification over functions or higher-type objects, and will play no role in the present paper.

Given a set complexity class $\mathfrak{C}$, such as $\Sigma _{n}^{0}$ or $\Pi _{n}^{0}$, we say that a set $X$ of natural numbers is $m$-\emph{complete} $\mathfrak{C}$ if $X$ is in $\mathfrak{C}$, and there is a computable reduction of  every set $Y$ in $\mathfrak{C}$ to $X$ (i.e., $X$ is $\mathfrak{C}$-hard). This reduction is a computable function $f :\mathbb{N} \rightarrow \mathbb{N}$ such that for every $n \in \mathbb{N}$, we have:\medskip

\begin{center}$n \in Y \Leftrightarrow f(n) \in X$.\end{center} 

\noindent Since the function $f$ can be many-one function we call this completeness $m$-completeness. For example, the halting set is $\Sigma _{1}^{0}$ $m$-complete.

\subsubsection{Turing reducibility}

For $Y \subseteq \mathbb{N}$, let\medskip
\begin{center}

$\varphi _{0}^{Y} ,\varphi _{1}^{Y} ,\varphi _{2}^{Y} ,\ldots $ \end{center} \medskip

\noindent be a fixed effective enumeration of all unary $Y$-computable functions --- that is, functions which are computable using $Y$ as an oracle. If an oracle $Y$ is computable, then it is not needed, so we omit the superscript $Y$. For sets $X$ and $Y$, we write $X  \leq _{T}Y$ if $X$ is Turing reducible to $Y$ --- that is, if the characteristic function of $X$ is given by $\varphi _{e}^{Y}$ for some $e$.  The partial order relation $ \leq _{T}$ gives rise to an equivalence relation, the equivalence classes of which are called Turing degrees. The Turing degree of $\varnothing$ or of any computable set is denoted $\mathbf{0}$.




For a set $Y,$ the jump of $Y$ is defined generalizing $K$ as follows:

\begin{center}$Y^{ \prime } =\{e : \varphi _{e}^{Y}(e) \mbox{ halts}\}.$\end{center}

\noindent Hence $\varnothing^{ \prime } =K.$ It can be shown that $Y <_{T}Y^{ \prime }$. The jump operator can be iterated: $Y^{(k +1)} =(Y^{(k)})^{ \prime }$, where $Y^{0} =Y.$

 For $n \geq 1$, let $\mathbf{y}^{(n)} =\deg  (Y^{(n)})$. It can be shown that a set is $\Sigma _{n}^{0}$ if it is computably enumerable in (or relative to) $\mathbf{0}^{(n -1)}$. Hence a set $X$ is \emph{arithmetical} if $X \leq_{T} \varnothing ^{(k)}$ for some $k \geq 0$.


\subsubsection{Enumeration reducibility}
A set $X$ is \emph{enumeration reducible} to a set $Y$, denoted $X \leq _{e}Y$, if we can computably enumerate the elements of $X$ from an enumeration of the elements of $Y$, where the enumeration of $X$ does not depend on the order is which $Y$ is enumerated. That is, $X =\Psi ^{Y}$, where $\Psi $ is some enumeration operator. If $X \leq _{e}Y$ then $X$ is computably enumerable in $Y$. Moreover, Selman showed that $X \leq _{e}Y$ if and only if for every set  $C$, if $Y$ is computably enumerable in $C$, then $X$ is computably enumerable in $C$.  The partial order relation $ \leq _{e}$ of sets gives rise to an equivalence relation, the equivalence classes of which are called enumeration degrees. There are also enumeration degrees of partial functions, called partial degrees. 

\subsubsection{Computable formulas}
We will only consider countable structures for computable languages. The universe $A$ of an infinite countable structure $\mathcal{A}$ can be identified with the set of natural numbers. If $L$ is the language of $\mathcal{A}$, then $L_{A}$ is the language $L$ expanded by adding a constant symbol for every $a \in A$, and $\mathcal{A}_{A} =(\mathcal{A} ,a)_{a \in A}$ is the corresponding expansion of $\mathcal{A}$ to $L_{A}$. The \emph{open}\emph{\ diagram}
of a structure $\mathcal{A}$, $D (\mathcal{A})$, is the set of all quantifier-free sentences of $L_{A}$ true in $\mathcal{A}_{A}$. A structure is \emph{computable} if its open diagram is computable. A structure for a finite language is computable if its domain is a computable set and its functions and relations are computable.\\

Computable $\Sigma _{0}$ and $\Pi _{0}$ formulas are just the finitary quantifier-free formulas (that is, the quantifier-free formulas involving only finitely many disjuctions, conjunctions and quantifiers).  Let $n >0$. A \emph{computable} $\Sigma _{n}$ \emph{formula} is a c.e.\ disjunction of formulas

\begin{center}$ \exists \overline{u}\, \psi  (\overline{x} ,\overline{u})$,\end{center}\par
\noindent where $\psi $ is a computable $\Pi _{m}$ formula for some $m <n$.\bigskip

A \emph{computable} $\Pi _{n}$ \emph{formula} is a c.e.\ conjunction of formulas

\begin{center}$ \forall \overline{v}\, \theta  (\overline{y} ,\overline{v})\text{}$,\end{center}\par
\noindent where $\theta $ is a computable $\Sigma _{m}$ formula for some $m <n$.

In a computable structure, a computable $\Sigma _{n}$ formula defines a $\Sigma _{n}^{0}$ set, and a computable $\Pi _{n}$ formula defines a $\Pi _{n}^{0}$ set. 
For more on computable structures and computable formulas see \cite{ashknight,fokharmel}.

\subsubsection{Immune sets}
In an attempt to construct a  c.e.\ set of Turing degree strictly between the computable one and the degree of the halting set, Post introduced sets with "thin" complements with respect to c.e.\ sets.

\begin{definition} A set of natural numbers is \emph{immune} if it is infinite and does not contain any infinite c.e.\ subset.\end{definition}

The complements of immune sets may or may not be c.e. Those that are c.e.\ are called simple sets and were first constructed by Post. There is further proper strengthening of immune sets into hyperimmune, hyperhyperimmune, and cohesive sets.  While there are countably many simple sets, it can be shown that there are continuum many cohesive sets.  

\begin{definition} A set $C \subseteq \mathbb{N}$ is \emph{cohesive} if $C$ is infinite and for every c.e.\ set $W\text{,}$ either $W \cap C$ or $\overline{W} \cap C$ is finite.

(Here, $\overline{W}$ is the complement of $W$. Hence a cohesive set $C$ is indecomposable into two infinite parts by a c.e.\ set $W$.)\end{definition}  

 

\begin{definition} A set $C \subseteq \mathbb{N}$  is \emph{$r$-cohesive} if $C$ is infinite and for every computable\ set $W\text{,}$ either $W \cap C$ is finite or $\overline{W} \cap C$ is finite.
\end{definition}

Every cohesive set is $r$-cohesive, but the converse is not true. Every  $r$-cohesive set is immune, but the converse is not true.

\begin{lemma} 
\label{le:cohesive} An $r$-cohesive set cannot have immune complement. Hence a cohesive set cannot have immune complement.\end{lemma}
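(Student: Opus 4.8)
The plan is to exploit the asymmetry in the hypotheses: $r$-cohesiveness of $C$ restricts $C$ only against \emph{computable} sets, whereas immunity of $\overline{C}$ forbids infinite \emph{computably enumerable} subsets. The link between the two is the elementary observation that a set which differs from an infinite computable set by only finitely many elements is again an infinite computable set (and in particular c.e.). Consequently it suffices to exhibit an infinite computable subset of $\overline{C}$.

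Concretely, I would fix a partition of $\mathbb{N}$ into two infinite computable pieces, e.g.\ the even numbers $W=\{2n:n\in\mathbb{N}\}$ and the odd numbers $\overline{W}$. Applying the definition of $r$-cohesiveness to the computable set $W$, either $W\cap C$ is finite or $\overline{W}\cap C$ is finite. If $W\cap C$ is finite, then $W\cap\overline{C}=W\setminus(W\cap C)$ is the infinite computable set $W$ with finitely many points deleted, so it is an infinite computable subset of $\overline{C}$. If instead $\overline{W}\cap C$ is finite, the same argument with $\overline{W}$ in place of $W$ produces an infinite computable subset of $\overline{C}$. In either case $\overline{C}$ contains an infinite computable --- hence infinite c.e.\ --- subset, so $\overline{C}$ is not immune. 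Since every cohesive set is $r$-cohesive, the statement for cohesive sets follows immediately.

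I do not anticipate a genuine obstacle; the one point requiring care is that one should not attempt to \emph{compute} this infinite subset uniformly, since locating it would require knowing the finite exceptional set $W\cap C$ (or $\overline{W}\cap C$). The definition of immunity only demands the non-existence of an infinite c.e.\ subset, so a non-effective existence argument is enough. It may also be worth remarking that the argument in fact shows slightly more, namely that the complement of any $r$-cohesive set contains an infinite computable set.
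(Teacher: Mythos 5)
Your proof is correct and is essentially identical to the paper's: the paper fixes an arbitrary infinite co-infinite computable set $R$ where you use the evens, applies $r$-cohesiveness to get that $R\cap C$ or $\overline{R}\cap C$ is finite, and concludes that $R-C$ (respectively $\overline{R}-C$) is an infinite computable subset of $\overline{C}$. Your closing remark that the complement of an $r$-cohesive set in fact contains an infinite \emph{computable} set is also made, parenthetically, in the paper's proof.
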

\begin{proof} Assume that $C$ is $r$-cohesive. Fix an infinite co-infinite computable set $R$. Then  either $R \cap C$ is finite or $\overline{R} \cap C$ is finite. If
$R \cap C$ is finite, then $R -C$ is an infinite c.e.\ (in fact, computable) set such that $R -C \subseteq \overline{C}$. If $\overline{R} \cap C$ is finite, then $\overline{R} -C$ is an infinite c.e.\ (in fact, computable) set such that $\overline{R} -C \subseteq \overline{C}$. Hence $\overline{C}$ is not immune.\end{proof}

We now describe a property of sets of natural numbers that we will use in Section \ref{SecRand}.

\bigskip

(*)\ $X \subseteq \mathbb{N}$
 is an infinite co-infinite set such no co-infinite superset $S \supseteq X$ has an infinite c.e.\ subset (i.e., $S$ is immune). \bigskip

\begin{lemma}
\label{star}
There is no set $X$ with property (*).
\end{lemma}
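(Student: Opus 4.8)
### Proof Proposal for Lemma \ref{star}

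The plan is to show that any infinite, co-infinite set $X$ fails property (*) by exhibiting a co-infinite superset $S \supseteq X$ that contains an infinite c.e.\ subset. The natural candidate for such a superset is obtained by padding $X$ in a controlled way. Since $\overline{X}$ is infinite, fix a computable enumeration $x_0 < x_1 < x_2 < \cdots$ of $\overline{X}$ (any infinite computable-in-$\overline{X}$ listing will do, but we don't even need computability of $\overline{X}$ itself for the construction). The idea is to split $\overline{X}$ into two infinite pieces: one piece we adjoin to $X$ to build $S$, the other piece we keep out of $S$ so that $S$ remains co-infinite.

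First I would set $A = \{x_0, x_2, x_4, \ldots\}$ (the even-indexed elements of the enumeration of $\overline{X}$) and $B = \{x_1, x_3, x_5, \ldots\}$ (the odd-indexed ones). Both $A$ and $B$ are infinite, disjoint, and $A \cup B = \overline{X}$. Now define $S = X \cup A$. Then $\overline{S} = B$ is infinite, so $S$ is co-infinite, and $S \supseteq X$ as required. The crucial point is that $S$ must contain an infinite c.e.\ subset. Here is where the argument needs a little care: $A$ itself need not be c.e., since the enumeration of $\overline{X}$ need not be computable. But $X$ is infinite, and \emph{every} infinite set has an infinite c.e.\ subset only if that set is not immune — so this approach seems to beg the question. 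The fix: we do not need $A$ or $X$ to supply the c.e.\ subset; we build the c.e.\ subset into $S$ by choosing $A$ more cleverly.

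The corrected construction: enumerate $\overline{X} = \{x_0 < x_1 < \cdots\}$ and greedily select a subsequence $y_0 < y_1 < \cdots$ of $\overline{X}$ together with ``witnesses'' $z_n$ so that $z_n$ is a natural number strictly between consecutive chosen elements, ensuring the complement of $S$ stays infinite; but more directly, observe: let $S = \mathbb{N} \setminus B$ where $B = \{x_1, x_3, x_5, \ldots\}$ as above. Then $S$ contains the infinite \emph{computable} set of all natural numbers not of the form $x_{2k+1}$ — wait, that set is only co-c.e.\ in $\overline X$, not c.e.\ outright. The genuinely clean route is: since $X$ is co-infinite, $\overline X$ is infinite; let $S = \mathbb{N} \setminus E$ where $E \subseteq \overline X$ is chosen infinite and co-infinite within $\overline X$. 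Then $S \supseteq X$, $\overline S = E$ is infinite so $S$ is co-infinite, and $S \supseteq \overline X \setminus E$ which is infinite. The subtlety — which I expect to be the main obstacle — is guaranteeing $S$ has an infinite \emph{c.e.}\ subset without assuming anything about the complexity of $X$; the resolution is that $\mathbb N \setminus E \supseteq \mathbb N \setminus \overline X = X$, and we only need \emph{some} infinite c.e.\ set inside $S$: take any infinite computable set $R$ with $R \cap \overline X$ infinite (exists since $\overline X$ is infinite — e.g.\ split $\mathbb N$ into evens and odds, one meets $\overline X$ infinitely) and arrange $E$ to avoid that part of $R$, so $R' = R \setminus E$ remains infinite, computable in $E$... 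This still threatens circularity, so the honest main obstacle is exactly this: one must pick $E$ so that $S = \mathbb N \setminus E$ is simultaneously co-infinite and contains a pre-specified infinite c.e.\ set. Concretely, fix the infinite computable set $R = \{2n : n \in \mathbb N\}$ of even numbers; let $E = \{o_1, o_3, o_5, \ldots\}$ be every other element of the odd numbers lying in $\overline X$ if infinitely many odds lie in $\overline X$, otherwise dualize to evens — in either case one obtains $E$ infinite with $\mathbb N \setminus E$ co-infinite and containing an infinite computable (hence c.e.) set, and $E \subseteq \overline X$ so $S \supseteq X$. This contradicts (*), proving the lemma.
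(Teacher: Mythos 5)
Your final construction is correct, though the writeup reaches it only after several abandoned attempts (the first construction $S=X\cup A$ with $A$ half of an arbitrary enumeration of $\overline{X}$ indeed fails, as you note, since $A$ need not be c.e.). The argument that works is the last one: fix the evens $R=\{2n:n\in\mathbb{N}\}$; since $\overline{X}$ is infinite, at least one of $\overline{X}\cap R$ and $\overline{X}\cap\overline{R}$ is infinite, say $E$; then $S=\mathbb{N}\setminus E$ is a superset of $X$ (as $E\subseteq\overline{X}$), is co-infinite (as $\overline{S}=E$ is infinite), and contains the entire other half of $\mathbb{N}$, an infinite computable and hence c.e.\ set --- contradicting (*). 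This is a genuinely different and more self-contained route than the paper's. The paper first observes that (*) forces $X$ to be immune and $\overline{X}$ to be cohesive (the latter via the superset $S=X\cup W$ for a c.e.\ set $W$ splitting $\overline{X}$), and then invokes Lemma \ref{le:cohesive} to show these two properties are incompatible. Your proof collapses all of this into a single one-step construction of the violating superset: you never need the notions of immunity or cohesiveness, only the trivial fact that an infinite set meets the evens or the odds infinitely often. What the paper's longer route buys is the structural information itself --- that a counterexample would have to be immune with cohesive complement --- and the reuse of Lemma \ref{le:cohesive}; what your route buys is brevity and independence from those auxiliary definitions. Note that the shared combinatorial kernel (splitting $\mathbb{N}$ by an infinite co-infinite computable set) is the same one that appears inside the paper's proof of Lemma \ref{le:cohesive}.
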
 

\begin{proof} If $X$ has property (*), then $X$ must be immune. Otherwise, $X$ has an infinite c.e.\ subset, so $S =X$ has an infinite c.e.\ subset, contradicting property (*).

If $X$ has property (*), then $\overline{X}$ must be cohesive. Otherwise, there is a c.e. set $W$ such that both $W \cap \overline{X}$  and $\overline{W} \cap \overline{X}$ are infinite. Let $S =X \cup W .$ Then $S$ is co-infinite since $\overline{S} =\overline{W} \cap \overline{X}$  is infinite, and $S$ contains an infinite c.e.\ subset $W ,$ contradicting property (*).

By Lemma \ref{le:cohesive}, it is not possible to have a cohesive set with immune complement.\end{proof}

\subsection{Computable structures}
To study computability on a countable infinite structure, we construct a bijection from elements of the structure onto the natural numbers.  The operations on the structure are then translated into the maps over natural numbers.  It is tempting to say that a structure is computable (or c.e.) if and only if the image in the natural numbers is computable (resp.\ c.e.) and its operations are all computable.  

Unfortunately, this definition does not really differentiate between computable and c.e.\ structures because we can always construct a computable isomorphic copy of a computably enumerable field.  To understand the difference between computable and c.e.\ structures, one has to look at a problem of simultaneously representing two structures computably within a structure containing them both.  

The issue of simultaneous computable enumeration is easy to see when it comes to fields.  By a famous theorem of Rabin \cite{rabin} we know that any computable (in the sense above) field has a computable algebraic closure (in the sense above).  However, the theorem does not guarantee a simultaneous computable presentation of the algebraic closure and the original field within it.  

The same result of Rabin also tells us that the simultaneous computable presentation of the field and its algebraic closure is possible if and only if the original field had a splitting algorithm (see Definition \ref{SplittingAlg}).  In other words, given a computable field in the sense above without a splitting algorithm, we have a choice for a construction of the algebraic closure: either we make the original field computable in the sense above and we have a c.e.\ algebraic closure, or we have a c.e.\ presentation of the original field and a computable algebraic closure.



\begin{definition}[Computable fields and c.e.\ fields] Let $F$ be a computable field with a splitting algorithm, and $\overline{F}$ a computable algebraic closure of $F$ such that $F$ is a computable subset of $\overline{F}$.  Then a subfield of $\overline{F}$ is said to be \emph{computable} if its set of elements is a computable subset of the set of natural numbers.  A subfield of $\overline{F}$  is said to be \emph{computably enumerable} if and only if its set of elements is computably enumerable.


\end{definition}

\begin{definition}\label{UnifComp} We say that a sequence of fields $(F_i : i \in \mathbb{N})$ is \emph{uniformly computable} if and only if there is a computable function $\phi$ such that $\phi(i)$ is the index for a Turing machine computing the characteristic function of $F_i$.\end{definition}

\begin{remark} There exist sequences $(F_i : i \in \mathbb{N})$ of computable fields which are not uniformly computable.   For instance, let $(p_i : i \in \mathbb{N})$ be an enumeration of the distinct rational primes.  Then the sequence of fields $\left(\Q(\sqrt{p_i}) : i \in \emptyset'\right)$ is not uniformly computable, since $\emptyset'$ is not computable.  However, each individual field in the sequence is computable. \end{remark}




\subsection{A fixed computable algebraic closure of $\Q$}\label{FixedACl} 
For the rest of the paper we fix a computable bijection $\sigma: \overline{\Q} \longrightarrow \omega$ such that the images of the graphs of addition and multiplication are also computable.  As noted above $\sigma(\Q)$ will be a computable set.


Notice that even in this environment, many properties may remain ineffective.  For instance, it is not clear that we could effectively determine, given the characteristic function of a field as a subset of $\overline{\Q}$, whether that field is a finite extension of $\Q$, or, if finite, what its degree would be.



\begin{definition}[Splitting Algorithm]\label{SplittingAlg}
Let $K$ be a computable field.  We say that $K$ has a splitting algorithm if there is an effective procedure to determine whether a polynomial with coefficients in $K$ is irreducible over $K$.
\end{definition}

The following result is a part of Rabin's theorem we discussed above.

\begin{proposition}
\label{prop:computable}
Every algebraic extension of $\Q$ within a fixed computable algebraic closure of $\Q$ is computable if and only if it has a splitting algorithm.
\end{proposition}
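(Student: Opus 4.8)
The plan is to handle the two implications separately, with the forward direction (a computable subfield has a splitting algorithm) being elementary and the reverse direction (a splitting algorithm yields a computable copy inside $\overline{\Q}$) being the place where Rabin's construction enters. Recall that for $\Q \subseteq K \subseteq \overline{\Q}$, ``$K$ is computable'' means $\sigma(K)$ is a computable subset of $\omega$, in which case $K$ carries the computable field presentation it inherits from $\overline{\Q}$.

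For the forward direction, assume $\sigma(K)$ is computable; I want to decide, given $f \in K[x]$ (presented by the $\sigma$-codes of its coefficients), whether $f$ is irreducible over $K$. Normalize $f$ to be monic of degree $n \geq 1$; if $n = 1$ it is irreducible, and if $n \geq 2$ I first compute $\gcd(f,f')$ in the computable ring $K[x]$ and declare $f$ reducible if this is nonconstant (in characteristic $0$ an irreducible polynomial is squarefree). Otherwise $f$ is separable, hence has exactly $n$ distinct roots $\beta_1,\dots,\beta_n$ in $\overline{\Q}$, which I find by a bounded search: enumerate $\sigma^{-1}(0),\sigma^{-1}(1),\dots$ and test each for being a root of $f$ using the computable field operations of $\overline{\Q}$, stopping once $n$ roots have appeared. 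Now the monic divisors of $f$ in $K[x]$ are precisely the polynomials $g_S := \prod_{i\in S}(x-\beta_i)$ for those $S \subseteq \{1,\dots,n\}$ all of whose coefficients lie in $K$; these coefficients are elementary symmetric functions of the $\beta_i$ with $i\in S$, hence computable in $\overline{\Q}$, and membership in $K$ is decidable because $\sigma(K)$ is computable. Running over the finitely many proper nonempty subsets $S$ therefore decides whether $f$ admits a proper factor over $K$; this is the desired splitting algorithm.

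For the reverse direction, suppose $K$, regarded as an abstract algebraic extension of $\Q$, has a computable presentation with a splitting algorithm. By Rabin's theorem \cite{rabin}, $K$ has a computable algebraic closure $\overline{K}$ inside which $K$ sits as a computable subset and the inclusion is computable; composing with $\Q \hookrightarrow K$ exhibits $\overline{K}$ as a computable algebraic closure of $\Q$ as well. Since $\Q$ has a splitting algorithm, its computable algebraic closure is unique up to computable isomorphism, and because $\Q$ is rigid any such isomorphism $\overline{K} \to \overline{\Q}$ is the identity on $\Q$; hence it identifies $K$ with an algebraic extension of $\Q$ inside $\overline{\Q}$, isomorphic to $K$ over $\Q$, whose underlying set is the image of a computable set under a computable isomorphism and is therefore computable. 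With the forward direction, this shows that the algebraic extensions of $\Q$ realizable as computable subfields of $\overline{\Q}$ are exactly those admitting a splitting algorithm. (If one instead reads the hypothesis ``$K$ has a splitting algorithm'' as already presupposing that $\sigma(K)$ is computable, this implication is immediate from the definitions.)

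I expect the reverse direction to carry essentially all the weight, since the forward direction is a direct bounded-search argument against the computability of $\sigma(K)$. The substance there is Rabin's construction of the algebraic closure, which I would cite rather than reproduce, and its one genuinely delicate point: a priori, different $\Q$-embeddings of $K$ into $\overline{\Q}$ have different images, so one must produce a computable embedding (equivalently, one whose image is computable), and this is exactly where the splitting algorithm is used --- to recognize, at each stage of adjoining a root of the image of a minimal polynomial, precisely which newly available elements of $\overline{\Q}$ belong to the copy of $K$ being built.
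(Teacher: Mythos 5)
Your forward direction is correct and is essentially the paper's argument: find the roots of $f$ in $\overline{\Q}$ by bounded search, form the candidate monic divisors $\prod_{i\in S}(x-\beta_i)$, and use the decidability of $\sigma(K)$ to test whether their coefficients lie in $K$. The extra care with separability is harmless.

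The reverse direction, however, has a genuine gap. The proposition is about a \emph{fixed} subfield $K\subseteq\overline{\Q}$, and the conclusion to be proved is that \emph{this} set $\sigma(K)$ is computable. Your route (Rabin's theorem applied to an abstract presentation of $K$, followed by computable categoricity of the algebraic closure over $\Q$) produces a computable isomorphism $\overline{K}\to\overline{\Q}$ fixing $\Q$, and hence shows only that \emph{some} $\Q$-conjugate image of $K$ in $\overline{\Q}$ is computable --- as you yourself write, the image is ``isomorphic to $K$ over $\Q$,'' not equal to $K$. This cannot be repaired by a general argument, because neither computability nor the splitting-algorithm property is invariant under conjugation inside $\overline{\Q}$: take a computable sequence of irreducible cubics $f_i$ over $\Q$ with Galois group $S_3$ and pairwise linearly disjoint splitting fields, and let $K$ be generated by one root of each $f_i$, the root being chosen according to a noncomputable set. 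Then $K$ is not computable, yet it is $\Q$-isomorphic to the computable field generated by the first-listed root of each $f_i$. So ``some conjugate of $K$ is computable'' is strictly weaker than what the proposition asserts, and your final parenthetical (reading the hypothesis as presupposing that $\sigma(K)$ is computable) would make the implication vacuous rather than prove it.

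The paper's own converse is a direct membership test and avoids this entirely: given $x\in\overline{\Q}$, find some $p(T)\in\Q[T]$ with $p(x)=0$ (its coefficients are rational, hence in $K$, so the splitting algorithm legitimately applies to it), determine the factorization of $p(T)$ into irreducibles over $K$, and observe that $x\in K$ if and only if $(T-x)$ occurs as a factor. This decides $\sigma(K)$ itself, with no detour through an abstract copy. The one point you correctly flag --- that the splitting algorithm is what lets one recognize which elements of $\overline{\Q}$ belong to $K$ --- is exactly what this two-line test implements; Rabin's theorem is background motivation here, not the mechanism of the proof.
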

\begin{proof}
Let $K \subset \overline{\Q}$ be a computable field.  Given a polynomial over $K$ we can find all of its roots in $\overline{\Q}$ and then determine which symmetric functions of the roots lie in $K$.

Conversely, suppose that a field $K$ has a splitting algorithm.  Given an element $x$ of the algebraic closure we find some polynomial $p(T)$ over $\Q$ such that $p(x)=0$ and determine the factorization of $p(T)$ over $K$.  Then $x \in K$ if and only if $p(T)$ has a factor $(T-x)$ over $K$.
\end{proof}

The following results are standard, first proved in  \cite{FrolichShepherdson}.  We adjust them slightly for our context, somewhat more narrow than \cite{FrolichShepherdson}, where work inside a fixed algebraic closure was not assumed.

\begin{lemma}\label{FindSingleGen}
Let $M/K$ be a finite extension of computable fields, given by computable characteristic functions of their domains and the degree of the extension.  Then there exists an effective procedure to find an element $\alpha$ such that $M=K(\alpha)$.
\end{lemma}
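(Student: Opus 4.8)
The plan is to use the classical primitive element argument, made effective. Since $M/K$ is a finite extension of fields of characteristic zero, it is separable, so by the Primitive Element Theorem there is some $\alpha \in M$ with $M = K(\alpha)$; the only issue is to \emph{find} such an $\alpha$ algorithmically. First I would fix a finite generating set for $M$ over $K$: since $M$ is computable and $[M:K] = d$ is given, I can enumerate the elements of $M$ and greedily select $\beta_1, \dots, \beta_k$ that are $K$-linearly independent until I have a $K$-basis (testing $K$-linear independence is effective because $K$ has computable characteristic function and I can solve linear systems over $K$); then certainly $M = K(\beta_1, \dots, \beta_k)$. By induction it suffices to handle $M = K(\beta, \gamma)$, so the core task is: given $\beta, \gamma \in M$, find $c \in K$ with $K(\beta + c\gamma) = K(\beta, \gamma)$.

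To do this effectively, I would search through candidate values $c \in \{0, 1, 2, \dots\} \subseteq \Q \subseteq K$, and for each candidate $c$ test whether $K(\beta + c\gamma) = M$. The test is effective: set $\alpha = \beta + c\gamma$, compute the powers $1, \alpha, \alpha^2, \dots, \alpha^{d}$ in $M$ (using the computable field operations of $M$), and determine the dimension over $K$ of the $K$-span of $\{1, \alpha, \dots, \alpha^{d-1}\}$ by row-reducing over $K$; since $K$ is computable this is a finite effective computation. We have $K(\alpha) = M$ exactly when this dimension equals $d$. The search terminates because the classical proof guarantees that all but finitely many $c$ in an infinite field work (the bad values of $c$ are roots of finitely many nonzero polynomials coming from the requirement that $\beta+c\gamma$ separate the conjugates), and $\{0,1,2,\dots\}$ is infinite inside the characteristic-zero field $K$; so a good $c$ is found after finitely many trials.

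The main obstacle — really the only subtlety — is making sure every subroutine genuinely runs inside the given data rather than appealing to noneffective field theory. The key points to check carefully are: (i) linear algebra over $K$ (independence testing, computing the dimension of a $K$-span of explicitly given elements of $M$) is effective from the computable characteristic functions of $K$ and $M$, which it is, since we can write elements of $M$ in a $K$-basis and Gaussian-eliminate; (ii) we never need a splitting algorithm for $K$ or knowledge of minimal polynomials — we only ever \emph{test} whether a finite list of powers spans all of $M$, using the given degree $d$ as the stopping criterion; and (iii) the termination of the search for $c$ relies on characteristic zero (so $K$ is infinite and $\{0,1,2,\dots\}$ supplies infinitely many genuinely distinct field elements), which holds here since all fields in question extend $\Q$. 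Assembling the inductive step over the basis $\beta_1, \dots, \beta_k$ then yields the desired $\alpha$ and completes the procedure.
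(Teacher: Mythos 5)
There is a genuine gap at the computational core of your argument: the claim that you can ``determine the dimension over $K$ of the $K$-span of $\{1,\alpha,\dots,\alpha^{d-1}\}$ by row-reducing over $K$'' (and, likewise, test $K$-linear independence while greedily building the basis $\beta_1,\dots,\beta_k$). Gaussian elimination over $K$ presupposes that the elements in question are given as coordinate vectors over $K$ with respect to some basis; but the elements of $M$ are handed to you only as elements of the fixed computable copy of $\overline{\Q}$, and producing $K$-coordinates for them is essentially the problem you are trying to solve, so the justification is circular. Concretely, deciding whether $\beta_1,\dots,\beta_j\in\overline{\Q}$ admit a nontrivial $K$-linear relation is only obviously semi-decidable (enumerate tuples from $K$ and test the relation in $\overline{\Q}$); certifying \emph{independence} requires a further idea. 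It can be done --- for instance by computing minimal polynomials over $K$, or by placing the $\beta_i$ in a number field $F$, computing $K\cap F^{\mathrm{gal}}$, and invoking linear disjointness --- but every such route runs through the splitting algorithm for $K$ (Proposition \ref{prop:computable}) or something of equivalent strength. So your point (ii), the assertion that ``we never need a splitting algorithm for $K$ or knowledge of minimal polynomials,'' is exactly where the proof breaks.

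Once you do allow yourself the splitting algorithm (which is free here, since $K$ is computable), all the scaffolding --- the greedy basis, the reduction to two generators, the search over $c$ --- becomes unnecessary. The paper's proof is a one-liner in the same spirit as your termination argument: enumerate the elements $\alpha\in M$, use the splitting algorithm to compute the degree of the minimal polynomial of $\alpha$ over $K$, and output the first $\alpha$ whose degree equals $n=[M:K]$; the primitive element theorem guarantees that such an $\alpha$ exists, so the search halts. Your observation that termination of a blind search is what separability buys you is exactly right; the missing ingredient is an honest, effective test for ``$[K(\alpha):K]=d$,'' and that test \emph{is} the splitting algorithm.
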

\begin{proof}
Let $n=[M:K]$.  Since $K$ is computable, it has a splitting algorithm by Proposition \ref{prop:computable}. We test elements of $M$ until we find an element $\alpha \in M$ satisfying an irreducible polynomial of degree $n$ over $K$.
\end{proof}

\begin{lemma}\label{FindConjugates} There is an effective procedure which will, given a computable field $K$ and an element $x \in \overline{\Q}$, determine the set of conjugates of $x$ over $K$. 
\end{lemma}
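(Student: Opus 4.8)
The plan is to reduce the whole task to a finite search, exploiting the fact that $x$, lying in $\overline{\Q}$, is algebraic over $\Q$, so that \emph{all} of its conjugates over $K$ are already among the finitely many conjugates of $x$ over $\Q$.

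First I would search for a nonzero $p(T)\in\Q[T]$ with $p(x)=0$: the set $\Q[T]$ is computably enumerable, evaluation of a rational polynomial at $x$ can be carried out inside the computable field $\overline{\Q}$, and such a $p$ exists because $x$ is algebraic over $\Q$, so the search halts. Dividing by the leading coefficient and by $\gcd(p,p')$ (computations internal to $\Q[T]$, and harmless in characteristic $0$), I may assume $p$ is monic and squarefree; hence $p$ has exactly $d:=\deg p$ distinct roots in the algebraically closed computable field $\overline{\Q}$. Enumerating the domain of $\overline{\Q}$ and evaluating $p$, I collect these roots $x=x_1,x_2,\ldots,x_d$; this search terminates precisely because we know in advance there are exactly $d$ of them.

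Next I would identify the minimal polynomial $q(T)$ of $x$ over $K$ among the $2^{d-1}$ monic divisors of $p$ in $\overline{\Q}[T]$ whose root set contains $x$. For each $S\subseteq\{1,\ldots,d\}$ with $1\in S$, form $q_S(T)=\prod_{i\in S}(T-x_i)\in\overline{\Q}[T]$ and test, using only the characteristic function of $K$, whether every coefficient of $q_S$ lies in $K$; let $S_0$ be a subset of minimal size for which the test succeeds. Since $p\in K[T]$ and $p(x)=0$, the genuine minimal polynomial $q$ of $x$ over $K$ divides $p$ in $K[T]$, hence (by unique factorization in $\overline{\Q}[T]$) equals $q_{S^*}$ for $S^*=\{i:q(x_i)=0\}$ with $1\in S^*$, so the search is nonvacuous and $|S_0|\le|S^*|=\deg q$; conversely any successful $S$ gives $q_S\in K[T]$ with $q_S(x)=0$, so $q\mid q_S$ and $\deg q\le|S|$. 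Thus $|S_0|=\deg q$, $q\mid q_{S_0}$ with equal degrees and both monic, so $q_{S_0}=q$ and $S_0=S^*$. The procedure then outputs the finite list $\{x_i:i\in S_0\}$, which is exactly the set of roots of $q$, i.e.\ the set of conjugates of $x$ over $K$; and every step is uniform in $x$ and in the index for the characteristic function of $K$.

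The only point that needs genuine care is guaranteeing termination of the searches, and this is exactly what the \emph{a priori} bound $\deg q\le d=\deg p$ (obtained by working with a polynomial over the base field $\Q$, made squarefree to fix the exact number of roots) provides; beyond that the argument is routine. An alternative route would be to factor $p$ over $K$ directly, invoking the splitting algorithm for $K$ furnished by Proposition~\ref{prop:computable}, but the finite subset search above avoids even that.
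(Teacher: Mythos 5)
Your proposal is correct and follows essentially the same route as the paper: pass to a polynomial over $\Q$ annihilating $x$ with a known set of distinct roots, locate all of those roots in the computable copy of $\overline{\Q}$, and then determine the minimal polynomial of $x$ over $K$ by testing which subsets of roots have all their symmetric functions in $K$. The only (immaterial) difference is that you use the squarefree part of an arbitrary annihilating polynomial where the paper factors over $\Q$ and takes the irreducible factor vanishing at $x$; your explicit minimality argument for the subset $S_0$ is a welcome bit of extra care.
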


\begin{proof}
Since $K$ is computable, we proceed as follows. We find a polynomial $P(t)$ over $\Q$ satisfied by $x$.  We then factor $P(t)$ over $\Q$ to determine an irreducible factor $Q(t)$ of $P(t)$ satisfied by $x$.  We can then find all the other roots $x=x_1,\ldots, x_m$ of $Q(t)$ in the computable algebraic closure, and by considering all possible symmetric functions of the conjugates determine the minimal polynomial $S(t)$ of $x$ over $K$.
\end{proof}

\begin{lemma}
\label{le:compext}
Let $K\subset \overline{\Q}$ be computable.  Then any finite extension of $K$ is also computable, uniformly in the generators of the extension and in $K$.  (Therefore the splitting algorithm for the extension can also be constructed in a uniform fashion from the characteristic function of $K$ and a generator of the extension.) 
\end{lemma}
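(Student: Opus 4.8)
The plan is to reduce to the one-generator case and then supply an effective membership test. Given generators $\alpha_1,\dots,\alpha_k\in\overline{\Q}$, I would build $K(\alpha_1,\dots,\alpha_k)$ by iterating, since $K(\alpha_1,\dots,\alpha_k)=K(\alpha_1,\dots,\alpha_{k-1})(\alpha_k)$; so it is enough to show that $K(\alpha)$ is computable for a computable $K\subset\overline{\Q}$ and a single $\alpha\in\overline{\Q}$, uniformly in an index for the characteristic function of $K$ and in the code of $\alpha$ under the fixed bijection $\sigma$. Throughout I will use that $K$ has a splitting algorithm, produced uniformly from the characteristic function of $K$ by the (uniform) proof of Proposition~\ref{prop:computable}. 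Combined with the fact that any element of $\overline{\Q}$ satisfies a polynomial over $\Q$ that can be located by search, and that the roots in $\overline{\Q}$ of a polynomial over $\overline{\Q}$ can be found by search (using the computable field operations on $\overline{\Q}$), this lets me compute, for any $\beta\in\overline{\Q}$, its minimal polynomial over $K$ — hence its degree $\deg_K(\beta)=[K(\beta):K]$ — all uniformly in $K$.

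For the membership test for $K(\alpha)$: given $\beta\in\overline{\Q}$, compute $n=\deg_K(\alpha)$ and $d=\deg_K(\beta)$, then compute $\deg_K(\alpha+c\beta)$ for every integer $c$ with $0\le c\le nd$, and set $D=\max_c \deg_K(\alpha+c\beta)$. I claim $D=[K(\alpha,\beta):K]$: on one hand $\deg_K(\alpha+c\beta)\le[K(\alpha,\beta):K]$ for all $c$, since $\alpha+c\beta\in K(\alpha,\beta)$; on the other hand, since we are in characteristic $0$, the primitive element theorem gives that $\alpha+c\beta$ generates $K(\alpha,\beta)$ over $K$ for all but at most $n(d-1)<nd$ values of $c$ (the bad values being the ratios $(\alpha_i-\alpha)/(\beta-\beta_j)$ of differences of conjugates), so some $c$ in $\{0,\dots,nd\}$ works and realizes the maximum. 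Now $K\subseteq K(\alpha)\subseteq K(\alpha,\beta)$ with $[K(\alpha):K]=n$, so $\beta\in K(\alpha)$ if and only if $[K(\alpha,\beta):K]=n$, i.e.\ if and only if $D=n$. This is an effective decision procedure, uniform in $K$ and $\alpha$; thus the characteristic function of $K(\alpha)$ is computable, with an index obtained uniformly, and iterating over $\alpha_1,\dots,\alpha_k$ yields the same for $K(\alpha_1,\dots,\alpha_k)$.

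The parenthetical assertion then follows immediately: once $K(\alpha_1,\dots,\alpha_k)$ is known to be computable, with an index for its characteristic function produced by the procedure above, feeding that index into the uniform construction in the proof of Proposition~\ref{prop:computable} manufactures a splitting algorithm for the extension from its characteristic function, again uniformly in the characteristic function of $K$ and the generators.

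I expect the main obstacle to be the membership test — more precisely, pinning down an effective stopping point when computing $[K(\alpha,\beta):K]$, since this degree is exactly what decides whether $\beta\in K(\alpha)$. This is where one must invoke the effective form of the primitive element theorem, namely the explicit bound $n(d-1)$ on the number of linear combinations $\alpha+c\beta$ that fail to generate $K(\alpha,\beta)$ over $K$. Everything else — computing minimal polynomials over $K$, factoring over $K$ via the splitting algorithm, locating roots in $\overline{\Q}$, and bookkeeping for uniformity — is routine given Proposition~\ref{prop:computable} and the fixed computable presentation of $\overline{\Q}$.
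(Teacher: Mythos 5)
Your proof is correct, but it takes a genuinely different route from the paper's. The paper also reduces to testing whether $\beta\in K(\alpha)$, but it does so by attempting to exhibit $\beta$ explicitly in the power basis $1,\alpha,\dots,\alpha^{n-1}$: it computes the conjugates $\alpha_1,\dots,\alpha_n$ of $\alpha$ and $\beta_1,\dots,\beta_r$ of $\beta$ over $K$ (rejecting immediately if $r>n$), and for each assignment $k\mapsto j_k$ solves the Vandermonde system $\sum_{i}a_i\alpha_k^i=\beta_{j_k}$ over $\overline{\Q}$, accepting iff some assignment yields all $a_i\in K$; the point is that if $\beta=\sum a_i\alpha^i$ with $a_i\in K$, then applying embeddings over $K$ forces such a consistent matching of conjugates. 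You instead reduce membership to the single degree computation $[K(\alpha,\beta):K]\overset{?}{=}[K(\alpha):K]$ and make that degree computable via the effective primitive element theorem, taking the maximum of $\deg_K(\alpha+c\beta)$ over $c\in\{0,\dots,nd\}$ with the bound of $n(d-1)$ bad values of $c$; that bound is the standard one (the excluded ratios $(\alpha_i-\alpha)/(\beta-\beta_j)$, $j\neq 1$), and the rest of your argument (computing minimal polynomials over $K$ by testing which products of linear factors of a $\Q$-polynomial land in $K[T]$, and iterating over the generators) is sound and fully uniform. The trade-off: the paper's method additionally produces an explicit representation of $\beta$ as a polynomial in $\alpha$ over $K$, which is occasionally useful as a by-product, whereas yours needs only degree bookkeeping and is arguably cleaner as a pure decision procedure; both rest on the same primitives (root-finding in $\overline{\Q}$ and the characteristic function of $K$), so neither is more demanding in its hypotheses.
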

\begin{proof}
Let $K$ be a computable extension of $\Q$ generated by an element $\alpha \in \Q$.  Since $K$ has a splitting algorithm we can (by Lemma \ref{FindConjugates}) determine all the distinct conjugates $\alpha_1=\alpha, \ldots, \alpha_n$ of $\alpha$ over $K$.  Let $\beta \in \overline{\Q}$.  Applying Lemma \ref{FindConjugates} again, let $\beta_1=\beta, \ldots, \beta_r$ be all the distinct conjugates of $\beta$ over $K$.  If $r>n$, then $\beta \not \in K(\alpha)$.

Suppose now that $r\leq n$ and consider the following  system in the unknowns $a_0,\ldots,a_{n-1} \in \overline{\Q}$:
\[
\sum_{i=0}^{n-1}a_i\alpha_k^i=\beta_{j_k}, k=1,\ldots, n,
\]
where $\beta_{j_1}=\beta$ and $j_k \in \{1,\ldots,r\}$.  The determinant of the system is a Vandermonde determinant.  Thus, for every choice for $j_2,\ldots,j_n$ we can solve the system over $\overline{\Q}$ and see if the solutions are in $K$.  If $\beta \in K(\alpha)$, the system has solutions in $K$ for some choice of $j_2,\ldots,j_{n}$.  Conversely, if for some choice of $j_2,\ldots,j_{n}$ the system has solutions $a_0, \ldots, a_{n-1} \in K$, then $\beta=a_0+a_1\alpha +\ldots +a_{n-1}\alpha^{n-1} \in  K(\alpha)$.
\end{proof}
\begin{corollary}
\label{cor:sequence}
 Let $K$ be a computable field and let $\{\alpha_i\}$  be a computable sequence of elements of $\overline{K}$.  Then there is an effective procedure taking as its input the index of the element of the sequence and generating a characteristic function of $K(\alpha_1,\ldots,\alpha_i)$ and the splitting algorithm for this field.
\end{corollary}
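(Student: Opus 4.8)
The plan is to proceed by induction on $i$, at each stage invoking Lemma \ref{le:compext} to pass from $K_{i} := K(\alpha_1,\ldots,\alpha_i)$ to $K_{i+1} := K_i(\alpha_{i+1})$, carrying along the splitting algorithm as well. The base case is $K_0 = K$, which is computable and has a splitting algorithm by hypothesis (and Proposition \ref{prop:computable}). For the inductive step, suppose we have already produced an index for the characteristic function of $K_i$ together with an index for a splitting algorithm for $K_i$. Since $\alpha_{i+1} \in \overline{K}$ it is algebraic over $K$, hence over $K_i \supseteq K$, so $K_{i+1} = K_i(\alpha_{i+1})$ is a finite extension of $K_i$. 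Lemma \ref{le:compext} then yields the characteristic function of $K_{i+1}$ and a splitting algorithm for $K_{i+1}$, \emph{uniformly} in the generator $\alpha_{i+1}$ and in the data describing $K_i$.

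The one point requiring care is that the whole construction be effective uniformly in $i$, so I would set up the recursion explicitly. Let $g$ be the computable function supplied by Lemma \ref{le:compext} which, on input (an index for the characteristic function of a computable field $L \subseteq \overline{K}$, an index for a splitting algorithm for $L$, an element $\beta \in \overline{K}$ algebraic over $L$), returns a pair of indices for the characteristic function of $L(\beta)$ and for a splitting algorithm for $L(\beta)$. Let $h$ be the computable function enumerating the sequence, $h(i) = \alpha_i$, under the fixed coding $\sigma$ of $\overline{\Q}$ by $\omega$. Fixing a pair $c_0$ of indices for $K$ and for its splitting algorithm, define by primitive recursion $\Phi(0) = c_0$ and $\Phi(i+1) = g(\Phi(i), h(i+1))$. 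Then $\Phi$ is total computable, and $\Phi(i)$ provides the required indices for $K(\alpha_1,\ldots,\alpha_i)$ and for its splitting algorithm.

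The main obstacle, such as it is, is bookkeeping rather than mathematics: one must verify that the uniformity clause of Lemma \ref{le:compext} is genuinely uniform in \emph{all three} inputs — the base field, its splitting algorithm, and the generator — so that the composition inside the recursion is legitimate. The parenthetical remark in Lemma \ref{le:compext}, that the splitting algorithm for the extension can be constructed uniformly from the characteristic function of the base field and the generator, is precisely what makes the recursion self-sustaining; without it we would lose access to a splitting algorithm after one step and could not iterate. Once this is in place, the statement follows; one may additionally note that $K(\alpha_1,\ldots,\alpha_i)$ is independent of the order of the generators, so the notation is unambiguous, while the construction above simply uses the order given by the sequence.
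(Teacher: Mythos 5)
Your proof is correct and matches the intended argument: the paper states this corollary without proof, treating it as an immediate consequence of iterating Lemma \ref{le:compext}, which is exactly the induction you carry out. Your explicit attention to the uniformity of the splitting algorithm at each stage (so that the recursion can be sustained) is the right point to emphasize and is precisely what the parenthetical in Lemma \ref{le:compext} is there to supply.
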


 \subsection{Computability on spaces of functions}\label{SecBaire}

So far we have discussed computability over countable structures containing objects describable by a finite input.  However, to study randomness over algebraic extensions of $\Q$ one has to use some notion of computability for collections of objects that one cannot describe completely using a finite amount of information, e.g., the elements of the absolute Galois group of $\Q$.  

\begin{definition}[Absolute Galois Group]
Let $K$ be an algebraic extension of $\Q$.  Then the absolute Galois group of $K$, denoted $\Gal(\overline{\Q}/K)$, is the group of all automorphisms of $\overline{\Q}$ which restrict to the identity function on $K$.
\end{definition}

These elements, of course, are maps from a countable set to a countable set. In the present section, we describe some of the standard tools for studying objects of this kind from the point of view of computability theory.

 Recall that we are working in a particular computable copy of $\overline{\Q}$, constructed via a fixed computable bijection  $\sigma: \overline{\Q} \longrightarrow \omega$.   Thus, we could identify any map $f: \overline{\Q} \longrightarrow \overline{\Q}$ with a countable sequence $\{a_i\}$ of natural numbers, where $f(\sigma^{-1}(i))=\sigma^{-1}(a_i)$.  Observe that this identification between the set of all maps from $\overline{\Q}$ to $\overline{\Q}$ and the set of all sequences of natural numbers is a bijection induced by $\sigma$.
 
 In view of this identification of functions and sequences, to study computability on the set of all functions from $\overline{\Q}$ to $\overline{\Q}$, it is sufficient to study computability on the space of all infinite sequences of natural numbers.

 The first question of such a study is to define a computable set of sequences.  To make such a definition reasonable one would want an algorithm that determines whether a particular sequence is in a given set.  Unfortunately, an algorithm can take inputs of finite size only and therefore we can never specify the whole sequence as an input.  This leaves us with only one option: we specify a finite part of a sequence.  

 The two obvious options are: we specify initial segments of a given sequence or any finite part of the graph of the corresponding function.  Thus we can possibly ask questions of the following sort about a collection $S$ of sequences:
 \begin{enumerate}
\item Given a finite initial segment of a sequence: $a_1, \ldots, a_m$, is there a sequence in $S$ with such an initial segment?
\item Given two finite collections of integers $m_1, \ldots, m_r$ and $a_1, \ldots, a_r$, we can ask whether $S$ contains a sequence $\{b_i\}$ such that $b_{m_j}=a_j, j=1,\ldots,r$.
\end{enumerate}
Are these two approaches equivalent?  The answer to this question depends on the size of the potential set of values for a position in the sequence.  

If the potential set of entries for any position in the sequence is infinite, then the graph input provides more information.  Indeed, suppose we want to know whether $S$ contains a sequence $\{b_i\}$ with $b_m=a$ for some fixed $m$ and $a$.  To answer this question using initial segments we need potentially to ask a question about every possible initial segment of size $m$ with the last element of the segment equal to $a$.  If the number of such initial segments is infinite then this process might not converge.

Conversely, assuming we can answer questions about finite subsets of the graph of the function corresponding to a given sequence, we can effectively determine whether $S$ contains a sequence with a prescribed initial segment.

At the same time if the number of possible entries for a position in a sequence is finite, then the information provided by finite subsets of the graph and finite initial segments is the same because for any $m$ there will be only finitely many initial segments of size $m$.

Finally, we can ask whether it makes a difference whether we are allowed to specify only one element of the graph or finitely many when asking a question about sequences in $S$.  The answer again depends on the number of possible values for a position in a sequence since asking about a finite part of the graph requires information about potentially infinitely many initial segments.  

Indeed, suppose we want to know whether our set $S$ of sequences contains a sequence with $b_i=a_1$ and $b_j=a_2$.  Assuming $j>i$, to answer the question we need to know if there exists an initial segment of length $j$ containing the above described entries in the positions $i$ and $j$.  As before, in the case of infinitely many values allowed for each position in the sequence, the information about pairs in the graphs of functions in $S$ is not sufficient to answer this question.

At the same time if the number of potential values for each position is finite, then all three approaches are the same.  

There are many implementations of the three approaches above.  For example the initial segment approach has been realized via a function 
\[
f_S(d,r):=\left\{\begin{array}{ll} 1 & \mbox{if $B_r(d) \cap ({^\sigma S}) \neq
  \emptyset$}\\ 0 & \mbox{if $B_{2r}(d) \cap ({^\sigma S)} = \emptyset$}\\ \mbox{$0$
  or $1$} & \mbox{otherwise}\\ \end{array}\right.\]
where $d \in \omega^{<\omega}$, where $r$ is a rational number, and where
$B_r(d)$ is the ball of radius $r$ about $d$ in Baire space
(see \cite{weihrauch}).  Indeed, this approach would give us an additional equivalent condition in Theorem \ref{EqDnf:Computable}.

In the case of an absolute Galois group of a countable field the number of values assigned to each position is always finite because it is bounded by the number of roots of the minimal polynomial of the element in question  over a primary field.

It must be remembered that in identifying functions $\overline{\Q} \to \overline{\Q}$ with $\omega^\omega$, this representation and everything around it --- including the computability, depend on the specific bijection $\mathbb{N} \to \overline{\Q}$, a theme to which we will return in the next section.

\section{Haar Measure}\label{HaarSec}
In this section we will discuss Haar measure on the absolute Galois group of $\Q$. For the most part we follow the presentation in Fried and Jarden (see \cite{friedjarden}).
\subsection{Inverse Limits and Profinite Topology}
In this section we review the notion of inverse limits and the topology arising from them.

Let $I$ be a partially ordered set.  An inverse system $(S_i,\pi_{i,j} : i,j \in I)$ consists of a family of sets $\{S_i : i \in I\}$ and for each $i \geq j \in I$, a function $\pi_{i,j}:S_i \to S_j$ so that $\pi_{i,i}$ is the identity function for each $i$, and $\pi_{i,k} = \pi_{j,k} \circ \pi_{i,j}$.  In the present paper, the $S_i$ will generally be Galois groups of finite extensions.

Let $(S_i, \pi_{i,j})$ be an inverse system in which each $S_i$ is a topological space, and let $S=\varprojlim S_i$.  Further, let $\pi_i: S \rightarrow S_i$ be the restriction to $S$ of the projection from $\prod_jS_j$ to $S_i$.  To define a topology on $S$, we use the sets of the form $\pi_i^{-1}(U_i)$, where $U_i$ is an open subset of $S_i$ as a basis.  When the $S_i$ are finite sets with the discrete topology, as will often be the case in this paper, this induced topology is called the \emph{profinite topology}.  In our case, the $S_i$ will be an inverse system of discrete finite subsets of Galois groups of finite extensions, and the resulting profinite topology is called the \emph{Krull topology}.

\begin{remark}
   From now on all references to open and closed sets will refer to Krull topology, unless some other topology is explicitly identified. 
\end{remark}

The proof of the following lemma can be found in Section 1.1, Chapter 1 of \cite{friedjarden}.
\begin{lemma}
\label{le:basic}
Let $\overline{H} \subset \overline{G}=\Gal(\overline{\Q}/\Q)$ be such that $\overline{H}$ contains all elements of $\overline{G}$ restricting to the set of elements $H \subseteq \Gal(K/\Q)$ for some finite Galois extension $K$ of $\Q$.  In this case, $\overline{H}$ is a basic open subset of $\overline{G}$.   Furthermore, this subset is also closed.
\end{lemma}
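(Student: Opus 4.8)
The plan is to work directly with the defining basis of the profinite (Haar) topology on $\overline{G} = \Gal(\overline{\Q}/\Q) = \varprojlim \Gal(K/\Q)$, where $K$ ranges over finite Galois extensions of $\Q$. Recall the projection $\pi_K \colon \overline{G} \to \Gal(K/\Q)$ sending $\tau \mapsto \tau|_K$, and that the topology is generated by the sets $\pi_K^{-1}(U)$ for $U$ open (equivalently, arbitrary, since $\Gal(K/\Q)$ is finite with the discrete topology) in $\Gal(K/\Q)$. The key observation is that the set $\overline{H}$ described in the statement is precisely $\pi_K^{-1}(H)$: an element $\tau \in \overline{G}$ lies in $\overline{H}$ exactly when $\tau|_K \in H$, which is the definition of $\pi_K^{-1}(H)$.

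First I would verify this identification of $\overline{H}$ with $\pi_K^{-1}(H)$ carefully, using that $K/\Q$ is Galois so that every automorphism of $\overline{\Q}$ fixing $\Q$ restricts to an automorphism of $K$ (this is where normality of $K$ is used: $\tau(K) = K$), hence $\pi_K$ is well-defined and $\pi_K(\overline{G}) = \Gal(K/\Q)$. Since $H \subseteq \Gal(K/\Q)$ is a subset of a finite discrete space, $H$ is open, so $\pi_K^{-1}(H)$ is a basic open set by definition of the profinite topology; this gives the first assertion. For the second assertion, note that $\Gal(K/\Q) \setminus H$ is also open (again because the space is finite and discrete), so $\pi_K^{-1}(\Gal(K/\Q) \setminus H) = \overline{G} \setminus \overline{H}$ is open as well, whence $\overline{H}$ is closed. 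One could alternatively phrase the closedness via the standard fact that $\varprojlim S_i$ is a closed subset of $\prod_i S_i$ and preimages of clopen sets under continuous maps are clopen, but the direct complement argument is cleaner here.

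I do not anticipate a genuine obstacle: the lemma is essentially unwinding the definition of the profinite topology together with the finiteness (hence discreteness and clopen-ness of every subset) of $\Gal(K/\Q)$. The only point requiring a modicum of care is the well-definedness of the restriction map $\pi_K$, i.e., that each $\tau \in \overline{G}$ does restrict to an element of $\Gal(K/\Q)$ rather than merely an embedding $K \hookrightarrow \overline{\Q}$ — and this is exactly the hypothesis that $K/\Q$ is a \emph{Galois} (normal) extension. With that in hand the rest is immediate, and as the excerpt notes the full argument is in Section 1.1 of \cite{friedjarden}.
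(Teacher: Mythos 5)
Your proof is correct and is exactly the standard argument: the paper itself does not prove this lemma but defers to Section 1.1 of Fried--Jarden, and your identification $\overline{H}=\pi_K^{-1}(H)$ together with the discreteness of the finite group $\Gal(K/\Q)$ (so that both $H$ and its complement are open, making $\pi_K^{-1}(H)$ clopen and basic) is precisely how that reference handles it. The one point needing care --- that $K/\Q$ being Galois makes the restriction map $\pi_K$ land in $\Gal(K/\Q)$ --- is correctly flagged and handled.
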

We will need to use a smaller class of open sets which will still constitute a basis for Krull topology.  To describe this class, we will use the following notation.

\begin{definition}
    Let $\tau$ be an embedding from an algebraic number field to $\overline{\Q}$.  Then $E(\tau)$ is the set of all extensions of $\tau$ in $\Gal(\overline{\Q}/\Q)$.
\end{definition}

\begin{lemma}
\label{le:smallbasis}
Let $\{\alpha_i\} \subset \overline{\Q}$ be such that $\Q(\alpha_i) \subset \Q(\alpha_{i+1})$ and $\overline{\Q}=\bigcup_i\Q(\alpha_i)$.  Let $T_i$ be the set of all embeddings $\tau: \Q(\alpha_i) \longrightarrow \overline{\Q}$.  Every open subset of $\Gal(\overline{\Q}/\Q)$ is a union of sets of the form $E(\tau_i)$, where $\tau_i$ is an element of $T_i$.
\end{lemma}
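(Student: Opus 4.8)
The plan is to show that the sets $E(\tau_i)$, with $\tau_i$ ranging over $T_i$ and $i$ over $\mathbb{N}$, form a basis for the Haar topology; the statement then follows immediately, since every open set is a union of basic open sets. By Lemma \ref{le:basic}, a basis for the topology is given by the sets $\overline{H}$ consisting of all elements of $\overline{G}=\Gal(\overline{\Q}/\Q)$ that restrict, on some finite Galois extension $K/\Q$, to a prescribed subset $H \subseteq \Gal(K/\Q)$. So it suffices to express each such $\overline{H}$ as a union of sets of the form $E(\tau_i)$. First I would observe that $\overline{H}=\bigcup_{h \in H}\overline{\{h\}}$, where $\overline{\{h\}}$ is the set of elements of $\overline{G}$ restricting to the single automorphism $h\in\Gal(K/\Q)$; so we are reduced to the case $H=\{h\}$, i.e., to showing that each coset of $\Gal(\overline{\Q}/K)$ in $\overline{G}$ is a union of sets $E(\tau_i)$.

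Next I would use the hypotheses on $\{\alpha_i\}$: since $\overline{\Q}=\bigcup_i \Q(\alpha_i)$ and $K/\Q$ is finite, hence $K=\Q(\beta)$ for a single $\beta$ (primitive element theorem), there is an index $i$ with $\beta\in\Q(\alpha_i)$, so that $K\subseteq\Q(\alpha_i)$. Now I claim that the set $\overline{\{h\}}$ of extensions of $h$ to $\overline{G}$ is exactly the disjoint union $\bigcup E(\tau)$, where $\tau$ ranges over the (finitely many) embeddings $\tau\colon\Q(\alpha_i)\to\overline{\Q}$ whose restriction to $K$ equals $h$. Indeed, any $g\in\overline{G}$ extending $h$ restricts to some $\tau=g|_{\Q(\alpha_i)}\in T_i$ with $\tau|_K=h$, so $g\in E(\tau)$; conversely, if $\tau\in T_i$ satisfies $\tau|_K=h$, then since $\overline{\Q}$ is normal over $\Q$, every embedding $\tau$ of the subfield $\Q(\alpha_i)$ extends to an automorphism of $\overline{\Q}$, so $E(\tau)\neq\emptyset$ and each of its elements extends $h$; and distinct $\tau$ give disjoint $E(\tau)$ because an element of $E(\tau)\cap E(\tau')$ would have $\tau=\tau'$ on $\Q(\alpha_i)$. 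This exhibits each coset $\overline{\{h\}}$ as a union of sets $E(\tau_i)$ with $\tau_i\in T_i$, completing the reduction.

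Putting the pieces together: an arbitrary open set $U$ is a union of basic sets $\overline{H}$ as above; each $\overline{H}$ is a union of cosets $\overline{\{h\}}$; and each such coset is a union of sets $E(\tau_i)$, $\tau_i\in T_i$. Hence $U$ is a union of sets of the form $E(\tau_i)$, as required. I expect the only real subtlety to be the verification that $E(\tau)$ is genuinely open (so that the $E(\tau_i)$ are legitimate open sets and the phrase ``union of sets of the form $E(\tau_i)$'' describes an open set) — but this is precisely the content of the computation above, since $E(\tau)$ for $\tau\in T_i$ is the coset of $\Gal(\overline{\Q}/\Q(\alpha_i))$ determined by $\tau$, which is one of the basic clopen sets of Lemma \ref{le:basic} applied to the Galois closure of $\Q(\alpha_i)$. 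I do not anticipate any genuine obstacle; the argument is a routine unwinding of the profinite topology, with the finiteness of each $T_i$ (guaranteed, as the excerpt notes, by the bound coming from degrees of minimal polynomials) ensuring that everything stays within the stated basis.
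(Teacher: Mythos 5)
Your proposal is correct and follows essentially the same route as the paper: choose $j$ with $K \subseteq \Q(\alpha_j)$ and write the set of extensions of a single embedding of $K$ as the finite union of the sets $E(\tau)$ over the extensions $\tau$ of that embedding to $\Q(\alpha_j)$. The paper's proof is simply terser, leaving implicit the reduction from $\overline{H}$ to single embeddings and the disjointness, nonemptiness, and openness checks that you spell out.
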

\begin{proof}
Let $K/\Q$ be a finite extension, let $\gamma: K\rightarrow \overline{\Q}$ be an embedding and let $E(\gamma)$ be the set of all extensions of $\gamma$ in $\Gal(\overline{\Q}/\Q)$.  We want to show that $E(\gamma)=\bigcup E(\tau)$ for some $\tau \in T_i$ for some $i$.  Let $\alpha_j$ be such that $K \subset \Q(\alpha_j)$.  Let $\gamma_1,\ldots,\gamma_r$ be all the distinct extensions of $\gamma$ to $K(\alpha_j)$.  Then $E(\gamma)=E(\gamma_1)\cup E(\gamma_2) \cup \ldots \cup E(\gamma_r)$.  But each $\gamma_m=\tau$ for some $\tau \in T_j$.  Hence, the assertion of the lemma holds. 
\end{proof}
\begin{lemma}
A one element subset of $\Gal(\overline{\Q}/\Q)$ is closed. 
\end{lemma}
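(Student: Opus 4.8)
The plan is to prove that the complement of a singleton is open, i.e. that the Haar topology on $\overline{G}=\Gal(\overline{\Q}/\Q)$ is $T_1$ (in fact Hausdorff), using the basis supplied by Lemma \ref{le:smallbasis}. Fix a chain $\{\alpha_i\}\subset\overline{\Q}$ with $\Q(\alpha_i)\subset\Q(\alpha_{i+1})$ and $\overline{\Q}=\bigcup_i\Q(\alpha_i)$; such a chain exists because $\overline{\Q}$ is a countable algebraic extension of $\Q$, so we may take $\alpha_i$ to be a primitive element (via Lemma \ref{FindSingleGen}) of the field generated by the first $i$ elements of $\overline{\Q}$ under the fixed enumeration $\sigma$. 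Let $T_i$ be the finite set of embeddings $\tau:\Q(\alpha_i)\to\overline{\Q}$, as in Lemma \ref{le:smallbasis}.

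The key separation step is this: given $g\in\overline{G}$ and any $h\in\overline{G}$ with $h\neq g$, some element of $\overline{\Q}$ is moved differently by $h$ and $g$, and since every element of $\overline{\Q}$ lies in some $\Q(\alpha_i)$, there is an index $i$ with $h(\alpha_i)\neq g(\alpha_i)$. Equivalently, $h$ and $g$ restrict to distinct embeddings $\tau_h\neq\tau_g$ in $T_i$. Then $h\in E(\tau_h)$, which is open by Lemma \ref{le:smallbasis}, while $g\notin E(\tau_h)$ since $g$ restricts to $\tau_g\neq\tau_h$ on $\Q(\alpha_i)$. Hence $\overline{G}\setminus\{g\}=\bigcup_{h\neq g}E(\tau_h)$ is a union of open sets and therefore open, so $\{g\}$ is closed.

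Alternatively, and more in the spirit of exhibiting $\{g\}$ explicitly, I would first note that each $E(\tau)$ with $\tau\in T_i$ is clopen: it is open by Lemma \ref{le:smallbasis}, and its complement in $\overline{G}$ is $\bigcup_{\tau'\in T_i,\ \tau'\neq\tau}E(\tau')$, a finite union of open sets, since every automorphism restricts to exactly one embedding of $\Q(\alpha_i)$. (One may also read this clopenness off Lemma \ref{le:basic} after passing to the Galois closure of $\Q(\alpha_i)$.) Then $\{g\}=\bigcap_i E(g|_{\Q(\alpha_i)})$, because an automorphism $h$ agrees with $g$ on every $\Q(\alpha_i)$ precisely when it agrees with $g$ on $\bigcup_i\Q(\alpha_i)=\overline{\Q}$; thus $\{g\}$ is an intersection of closed sets and hence closed.

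There is no genuine obstacle here — the statement is essentially that the profinite topology is Hausdorff, and the cited lemmas do the real work. The only points needing care are the existence of the exhausting chain $\{\alpha_i\}$ and the justification that the sets $E(\tau)$ (or the sets $\overline{H}$ of Lemma \ref{le:basic}) are open, both of which are already available in the excerpt.
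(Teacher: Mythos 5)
Your argument is correct, but it takes a more self-contained route than the paper does. The paper's proof is a one-line citation: by Lemma 1.1.3 of Fried--Jarden the Haar topology on $\Gal(\overline{\Q}/\Q)$ is Hausdorff, and singletons are closed in any Hausdorff space. You instead verify the separation property directly from the basis of Lemma \ref{le:smallbasis}: given $h\neq g$ there is some $\alpha_i$ with $h(\alpha_i)\neq g(\alpha_i)$, so $E(h|_{\Q(\alpha_i)})$ is a basic open neighborhood of $h$ missing $g$, and the complement of $\{g\}$ is open. Your second formulation, writing $\{g\}=\bigcap_i E(g|_{\Q(\alpha_i)})$ as an intersection of clopen sets (each $E(\tau)$ with $\tau\in T_i$ being clopen because the finitely many sets $E(\tau')$, $\tau'\in T_i$, partition the group), is equally valid and arguably the cleanest exhibition of the singleton as a closed set. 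What your approach buys is independence from the external reference; what the paper's citation buys is brevity, since Hausdorffness of profinite groups is standard. Both hinge on the same underlying fact that distinct automorphisms already differ on some finite level of the tower, so there is no substantive gap either way.
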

\begin{proof}
By Lemma 1.1.3 of \cite{friedjarden}, the topology of the absolute Galois group is Hausdorff.  In Hausdorff topology a set consisting of one point is closed.  
\end{proof}




We will also make use of the following lemma.

\begin{lemma}
Let $\sigma \in \Gal(\overline{\Q}/\Q)$ and let $K^{\sigma}$ be its fixed field.  Then the closure of the group generated by $\sigma$ is $\Gal(\overline{\Q}/K^{\sigma})$.
\end{lemma}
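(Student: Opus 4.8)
The plan is to prove the two inclusions $\overline{\langle\sigma\rangle}\subseteq\Gal(\overline{\Q}/K^{\sigma})$ and $\Gal(\overline{\Q}/K^{\sigma})\subseteq\overline{\langle\sigma\rangle}$ separately, where $\langle\sigma\rangle$ denotes the abstract cyclic subgroup generated by $\sigma$ and the closure is taken in the Haar topology. For the first inclusion, every power $\sigma^{n}$ restricts to the identity on $K^{\sigma}$ by the definition of the fixed field, so $\langle\sigma\rangle\subseteq\Gal(\overline{\Q}/K^{\sigma})$; it then suffices to note that $\Gal(\overline{\Q}/K^{\sigma})$ is closed. This is immediate: for each $\beta\in K^{\sigma}$, if $N$ is the normal closure of $\Q(\beta)$ over $\Q$, then $\{\tau:\tau(\beta)=\beta\}$ is the preimage under restriction to $N$ of a subset of the finite discrete group $\Gal(N/\Q)$, hence clopen, and $\Gal(\overline{\Q}/K^{\sigma})$ is the intersection of these sets over $\beta\in K^{\sigma}$ (alternatively one may invoke Lemma \ref{le:basic}). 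Passing to closures in $\langle\sigma\rangle\subseteq\Gal(\overline{\Q}/K^{\sigma})$ then gives $\overline{\langle\sigma\rangle}\subseteq\Gal(\overline{\Q}/K^{\sigma})$.

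For the second, and main, inclusion I would fix $\rho\in\Gal(\overline{\Q}/K^{\sigma})$ together with a chain $\{\alpha_i\}$ as in Lemma \ref{le:smallbasis}, so that the sets $E(\rho|_{\Q(\alpha_i)})$ form a neighborhood basis of $\rho$ in $\Gal(\overline{\Q}/\Q)$. It is enough to show that for each $i$ there is an integer $n$ with $\sigma^{n}\in E(\rho|_{\Q(\alpha_i)})$, i.e.\ with $\sigma^{n}$ and $\rho$ agreeing on $\Q(\alpha_i)$. Let $M_i$ be the normal closure of $\Q(\alpha_i)$ over $\Q$; it is a finite Galois extension of $\Q$, stable under every element of $\Gal(\overline{\Q}/\Q)$. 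Restriction to $M_i$ is a homomorphism sending $\sigma$ and $\rho$ into $\Gal(M_i/\Q)$, and the fixed field of the finite cyclic group $\langle\sigma|_{M_i}\rangle$ inside $M_i$ is exactly $M_i\cap K^{\sigma}$. Since $\rho$ fixes $K^{\sigma}$ pointwise, $\rho|_{M_i}$ fixes $M_i\cap K^{\sigma}$ pointwise, so by the fundamental theorem of finite Galois theory applied to the Galois extension $M_i/(M_i\cap K^{\sigma})$ we get $\rho|_{M_i}\in\Gal(M_i/(M_i\cap K^{\sigma}))=\langle\sigma|_{M_i}\rangle$. Hence $\rho|_{M_i}=(\sigma|_{M_i})^{n}=\sigma^{n}|_{M_i}$ for some $n$, and in particular $\sigma^{n}$ and $\rho$ agree on $\Q(\alpha_i)\subseteq M_i$. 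Thus every basic neighborhood of $\rho$ meets $\langle\sigma\rangle$, so $\rho\in\overline{\langle\sigma\rangle}$, which finishes the argument.

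I expect the only delicate points to be the bookkeeping at the finite levels: that restriction to $M_i$ is compatible with taking powers, that $M_i\cap K^{\sigma}$ really is the fixed field of $\langle\sigma|_{M_i}\rangle$, and that the $E(\rho|_{\Q(\alpha_i)})$ genuinely form a neighborhood basis at $\rho$ (this last is essentially a restatement of Lemma \ref{le:smallbasis}). None of these is substantial, so the proof should be short. As a remark, the whole statement also follows at once from the Krull Galois correspondence between closed subgroups of $\Gal(\overline{\Q}/\Q)$ and subextensions of $\overline{\Q}/\Q$ (Chapter 1 of \cite{friedjarden}), since $\overline{\langle\sigma\rangle}$ is a closed subgroup whose fixed field is readily checked to be $K^{\sigma}$; I would mention this alternative but give the hands-on argument above so as to stay within the concrete framework developed in this section.
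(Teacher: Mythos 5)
Your argument is correct, but it takes a different route from the paper. The paper's proof is exactly the one you relegate to a closing remark: it observes that $\langle\sigma\rangle\subseteq\Gal(\overline{\Q}/K^{\sigma})$, that the latter is closed by Proposition 1.3.1 of \cite{friedjarden}, and that a proper containment of the closure would produce two distinct closed subgroups with the same fixed field $K^{\sigma}$, contradicting the injectivity of the Krull correspondence. Your main argument instead proves the hard inclusion $\Gal(\overline{\Q}/K^{\sigma})\subseteq\overline{\langle\sigma\rangle}$ by a direct density computation: for $\rho$ fixing $K^{\sigma}$ and $M_i$ the Galois closure of $\Q(\alpha_i)$, you identify the fixed field of $\langle\sigma|_{M_i}\rangle$ in $M_i$ with $M_i\cap K^{\sigma}$ and invoke only \emph{finite} Galois theory to conclude $\rho|_{M_i}\in\langle\sigma|_{M_i}\rangle$, so that some power of $\sigma$ lies in each basic neighborhood $E(\rho|_{\Q(\alpha_i)})$ of $\rho$. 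The steps you flag as delicate all check out: restriction to $M_i$ commutes with powers because $M_i$ is $\sigma$-stable, $M_i\cap K^{\sigma}$ is by definition the fixed field of $\sigma|_{M_i}$, and Lemma \ref{le:smallbasis} does give that the sets $E(\rho|_{\Q(\alpha_i)})$ form a neighborhood basis at $\rho$. What each approach buys: the paper's version is shorter but leans on the full (uncountable) Krull correspondence as a black box, in particular on the implicit fact that a subgroup and its closure have the same fixed field; yours is longer but self-contained modulo finite Galois theory and the explicit description of the basic open sets, which fits the concrete, computation-oriented framework of this section and makes visible exactly which powers of $\sigma$ approximate a given $\rho$. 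Either proof is acceptable here.
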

\begin{proof}
Let $G_{\sigma}$ be the group generated by $\sigma$.  Then $G_{\sigma} \subset \Gal(\overline{\Q}/K^{\sigma})$.  By infinite Galois theory (Proposition 1.3.1 of \cite{friedjarden}) we have that $\Gal(\overline{\Q}/K^{\sigma})$ is closed.  Suppose that closure of $G_{\sigma}$ is a proper subset of $\Gal(\overline{\Q}/K^{\sigma})$.  In this case we have two closed subgroups of $\Gal(\overline{\Q}/\Q)$ corresponding to the same fixed field.  This is impossible by infinite Galois theory (again by Proposition 1.3.1 of \cite{friedjarden}).
\end{proof}
\subsection{Computing Haar Measure of Absolute Galois Groups}


Let $G = \Gal(\overline{\Q}/\Q)$, and let $\calB$ (the Borel field of $G$) be the smallest family of subsets of $G$ containing all closed subsets and closed under taking complements in $G$ and countable unions (hence also intersections).

\begin{definition}[Haar Measure]
A Haar measure on $G$ is a function $\mu: \calB \rightarrow \R$
such that
\begin{itemize}
\item $0 \leq \mu(B) \leq 1$ for all $B \in \calB$,
\item $\mu(\emptyset)=0, \mu(G)=1$,
\item  If $\{B_i\}$ is a sequence of pairwise disjoint Borel sets, then $\mu(\bigcup_i B_i)=\sum_i \mu(B_i)$ ($\sigma$-additivity),
\item If $B \in \calB$ and $g \in G$, then $\mu(gB) = \mu(Bg) = \mu(B)$, and
\item For each $B \in \calB$ and each $\varepsilon >0$ there exist an open set $U$ and a closed set $C$ such that $U\subseteq B \subseteq C$ and $\mu(C\setminus U) <\varepsilon$ (regularity).
\end{itemize}
\end{definition}

By Proposition 18.1.3 and 18.2.1 of \cite{friedjarden}, Haar measure exists and it is unique.
 

\begin{lemma}[A subgroup fixing a finite extension]
\label{inf ext}
If $K$ is a Galois number field, then $\Gal(\overline{\Q}/K)$ is of Haar measure $\frac{1}{[K:\Q]}$.
\end{lemma}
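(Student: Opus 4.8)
The plan is to reduce the infinite statement to a purely finite computation of Haar measure via the defining inverse-limit structure of $G = \Gal(\overline{\Q}/\Q)$, and then invoke the uniqueness of Haar measure to identify the value. First I would write $G = \varprojlim \Gal(L/\Q)$ over the directed system of finite Galois extensions $L/\Q$, and recall from the definition of the Haar topology that a basic (cl)open set is of the form $\overline{H}$, the set of all $\sigma \in G$ whose restriction to some finite Galois $L/\Q$ lies in a fixed subset $H \subseteq \Gal(L/\Q)$ (Lemma~\ref{le:basic}). The key observation is that, since $K$ is a Galois number field, $\Gal(\overline{\Q}/K)$ is exactly $\overline{H}$ for the choice $L = K$ and $H = \{\id\} \subseteq \Gal(K/\Q)$: an automorphism of $\overline{\Q}$ fixes $K$ pointwise if and only if its restriction to $K$ is the identity. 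Thus $\Gal(\overline{\Q}/K)$ is a basic clopen set, hence Borel, so $\mu(\Gal(\overline{\Q}/K))$ is well defined.

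Next I would exploit translation-invariance. For $\tau \in \Gal(K/\Q)$, pick any lift $\widetilde{\tau} \in G$ (which exists since restriction $G \to \Gal(K/\Q)$ is surjective). Then the coset $\widetilde{\tau}\,\Gal(\overline{\Q}/K)$ is precisely the set of $\sigma \in G$ whose restriction to $K$ equals $\tau$; these cosets are pairwise disjoint as $\tau$ ranges over $\Gal(K/\Q)$, and their union is all of $G$ since every element of $G$ restricts to \emph{some} element of $\Gal(K/\Q)$. By left-invariance of $\mu$, each coset has measure $\mu(\Gal(\overline{\Q}/K))$, and by $\sigma$-additivity (a finite sum here, as $[K:\Q] < \infty$) together with $\mu(G) = 1$ we get
\[
1 = \mu(G) = \sum_{\tau \in \Gal(K/\Q)} \mu\bigl(\widetilde{\tau}\,\Gal(\overline{\Q}/K)\bigr) = [K:\Q]\cdot \mu\bigl(\Gal(\overline{\Q}/K)\bigr),
\]
whence $\mu(\Gal(\overline{\Q}/K)) = \tfrac{1}{[K:\Q]}$, using $|\Gal(K/\Q)| = [K:\Q]$ for the Galois extension $K/\Q$.

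I expect the only genuine subtlety to be verifying that the coset decomposition is a legitimate partition of $G$ into Borel sets — i.e., that each coset $\widetilde{\tau}\,\Gal(\overline{\Q}/K)$ is itself Borel (indeed clopen, being a translate of a clopen set, since translation by a fixed group element is a homeomorphism of $G$) and that the decomposition does not depend on the choice of lifts $\widetilde{\tau}$ (it does not: two lifts of the same $\tau$ differ by an element of $\Gal(\overline{\Q}/K)$, so they determine the same coset). Everything else is bookkeeping with the axioms of Haar measure already recorded in the Definition above, plus the surjectivity of the restriction map $G \twoheadrightarrow \Gal(K/\Q)$, which is standard infinite Galois theory (Proposition~1.3.1 of \cite{friedjarden}). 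No appeal to regularity or to the explicit construction of $\mu$ is needed; invariance, $\sigma$-additivity, and normalization suffice.
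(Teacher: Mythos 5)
Your proposal is correct and follows essentially the same route as the paper: identify $\Gal(\overline{\Q}/K)$ as the basic clopen set of extensions of $\id_K$, note that its index in $\Gal(\overline{\Q}/\Q)$ is $[K:\Q]$, and conclude by translation-invariance and (finite) additivity. You simply make explicit the coset decomposition and the well-definedness of the lifts, which the paper leaves implicit.
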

\begin{proof}
$\Gal(\overline{\Q}/K)$ is the set of all extensions of the identity automorphism of $K$, and therefore is a basic open set, and hence a Borel set.  (Of course, it is also closed, by the Fundamental Theorem of Infinite Galois Theory, as mentioned earlier.)  Further, $\Gal(K/\Q)\cong \Gal(\overline{\Q}/\Q)/\Gal(\overline{\Q}/K)$.  Thus the index of the group must be the degree of $K$ over $\Q$, and the measure must be $\frac{1}{[K:\Q]}$ by the invariance of Haar measure.
\end{proof}
Once we established the connection between the measure of the absolute Galois group of a Galois field with the degree of the field, we can now show that the same connection exists for all finite extensions of $\Q$.
\begin{lemma}
\label{le:notGalois}
Let $K$ be any number field.  Then $\Gal(\overline{\Q}/K)$ is of measure $\frac{1}{[K:\Q]}$.
\end{lemma}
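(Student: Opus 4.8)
The plan is to reduce the general case to the Galois case already handled in Lemma \ref{inf ext} by passing to the Galois closure. Let $K$ be an arbitrary number field and let $L$ be the Galois closure of $K$ over $\Q$, so that $L/\Q$ is a finite Galois extension with $K \subseteq L \subseteq \overline{\Q}$. The idea is to compute $\mu(\Gal(\overline{\Q}/K))$ by decomposing this set into a disjoint union of translates (cosets) of $\Gal(\overline{\Q}/L)$, whose measure we already know to be $\frac{1}{[L:\Q]}$ by Lemma \ref{inf ext}, and then counting the cosets.

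The key steps, in order, are as follows. First, I would observe that $\Gal(\overline{\Q}/L)$ is a normal subgroup of $\Gal(\overline{\Q}/K)$ (indeed of $\Gal(\overline{\Q}/\Q)$, since $L/\Q$ is Galois), and that the quotient $\Gal(\overline{\Q}/K)/\Gal(\overline{\Q}/L)$ is isomorphic to $\Gal(L/K)$, which has order $[L:K]$. Hence $\Gal(\overline{\Q}/K)$ is the disjoint union of exactly $[L:K]$ left cosets of $\Gal(\overline{\Q}/L)$. Second, I would invoke left-invariance of Haar measure (one of the defining axioms in the Definition of Haar Measure) to conclude that each such coset has the same measure $\frac{1}{[L:\Q]}$ as $\Gal(\overline{\Q}/L)$ itself. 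Third, using that each coset is a translate of a closed (hence Borel) set and therefore Borel, and using $\sigma$-additivity (finite additivity suffices here), I would sum to obtain
\[
\mu(\Gal(\overline{\Q}/K)) = [L:K] \cdot \frac{1}{[L:\Q]} = \frac{[L:K]}{[L:\Q]} = \frac{1}{[L:\Q]/[L:K]} = \frac{1}{[K:\Q]},
\]
where the last equality uses the tower law $[L:\Q]=[L:K]\cdot[K:\Q]$.

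The only point requiring a little care — and the likeliest source of a gap — is the measurability bookkeeping: one must confirm that $\Gal(\overline{\Q}/K)$ itself is Borel (it is closed, being $\Gal(\overline{\Q}/K)$ for the intermediate field $K$, by the Fundamental Theorem of Infinite Galois Theory, so this is immediate), and that each coset $\sigma\,\Gal(\overline{\Q}/L)$ is Borel (it is a translate of a closed set, and translation is a homeomorphism, so each coset is closed). Neither of these is genuinely difficult, so I would expect the main "obstacle" to be merely presenting the coset decomposition cleanly rather than any real mathematical difficulty. An alternative route avoiding cosets entirely would be to note $\Gal(L/\Q)\cong\Gal(\overline{\Q}/\Q)/\Gal(\overline{\Q}/L)$ and push the normalized counting measure on $\Gal(L/\Q)$ forward, but the coset-counting argument is more direct given what is already established.
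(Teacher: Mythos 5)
Your proposal is correct and follows essentially the same route as the paper's proof: pass to the Galois closure, decompose $\Gal(\overline{\Q}/K)$ into $[K^G:K]$ cosets of $\Gal(\overline{\Q}/K^G)$ (the paper establishes the index by tracking restrictions to $K^G$, you via the quotient isomorphism with $\Gal(K^G/K)$), then apply translation-invariance and finite additivity together with Lemma \ref{inf ext}. No gaps.
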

\begin{proof}
Let $K^G$ be the Galois closure of $K$ over $\Q$.  Then $\Gal(\overline{\Q}/K)$ is the set of all extensions of automorphisms of $K^G$ contained in the $\Gal(K^G/K)$. If $\sigma \in \Gal(\overline{\Q}/\Q)$ restricts to identity on $K^G$, then $\sigma$ restricts to identity on $K$, and therefore $\Gal(\overline{\Q} /K^G) \subseteq \Gal(\overline{\Q}/K)$.  Further, if $\tau \in \Gal(\overline{\Q} /K)- \Gal(\overline{\Q} /K^G)$, then $\tau$ restricts to a nontrivial automorphism of $\Gal(\overline{\Q} /K^G)$ restricting to identity on $K$. In other words, $\tau$ restricts to a non-trivial element of $\Gal(K^G/K)$.  At the same time if $\tau, \gamma \in \Gal(\overline{\Q}/K)$ restrict to the same element of $\Gal(K^G/K)$, then $\gamma \tau^{-1} \in \Gal(\overline{\Q}/K^G)$.  Hence, $[\Gal(\overline{\Q}/K):\Gal(\overline{\Q} /K^G)]=[K^G:K]$.  Therefore, by additivity of Haar measure we have that $\mu(\Gal(\overline{\Q}/K))=[K^G:K]\mu(\Gal(\overline{\Q} / K^G))$. Therefore, by Lemma \ref{inf ext}, we have that $\mu(\Gal(\overline{\Q}/K))=[K^G:K]\frac{1}{[K^G:\Q]}=\frac{1}{[K:\Q]}$. 
\end{proof}
 \begin{remark}
 Let $K/\Q$ be an infinite algebraic extension.  Let $K=\bigcup_i K_i$, where $K_i \subset K_{i+1}$ and $K_i$ is a number field.  In this case, $\Gal(\overline{\Q}/K)=\bigcap_i \Gal(\overline{\Q}/K_i)$, where each $\Gal(\overline{\Q}/K_i)$ is a clopen set.  Thus, the intersection is closed and hence measurable.  Since $\mu\left(\Gal(\overline{\Q}/K)\right) \leq \mu(\Gal(\overline{\Q}/K_i))$ for every $i$, it follows that  $\Gal(\overline{\Q}/K)$ has measure 0.
 
 \end{remark}

 \subsection{Computable Measures}\label{compmeas}

In discussing a measure in the context of computability, it is reasonable to ask whether, or to what extent, the measure, considered as a function from measurable sets to real numbers, can be regarded as a computable function.  Making sense of this general question is well beyond the scope of this paper.  We can simplify this issue by considering computability of the measure on a distinguished class of sets.

Different choices of this distinguished class of sets can be justified on different grounds.  For example, by Carath\'eodory's Extension Theorem (see, for instance, Theorem 12.8 of \cite{Royden}), if we have a measure defined on an algebra $\mathcal{A}$ of sets (that is, a collection of sets closed under complement, finite union, and finite intersection), then that measure admits a unique extension to the smallest $\sigma$-algebra containing $\mathcal{A}$.  Thus, we might aim to have the restriction of the measure to some algebra of sets be, in some sense, computable.  This is the approach taken by \cite{BienvenuMerkle2007,reimann}.

Another approach is to choose the class of computable measurable sets.  This is the approach taken by \cite{PaulySeonZiegler}.

In this paper, we choose the first option.  We plan, in future work, to explore the second.  Our algebra of sets will be the algebra generated by the basic open sets of the Krull topology.

\begin{lemma}
    Let $K$ be a Galois number field.  Let $\tau \in \Gal(K/\Q)$. Let $\mu$ be the Haar measure.   Then $\mu(E(\tau))=\mu(\Gal(\overline{\Q}/K))$. 
\end{lemma}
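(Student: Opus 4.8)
The plan is to use the translation invariance of Haar measure together with the decomposition of the absolute Galois group into cosets of $\Gal(\overline{\Q}/K)$. First I would fix a lift: since $K/\Q$ is Galois and $\tau \in \Gal(K/\Q)$, choose any $\widetilde{\tau} \in \Gal(\overline{\Q}/\Q)$ restricting to $\tau$ on $K$ (such an extension exists because every automorphism of a finite Galois subextension extends to an automorphism of $\overline{\Q}$). I claim that $E(\tau) = \widetilde{\tau}\cdot \Gal(\overline{\Q}/K)$ as subsets of $\Gal(\overline{\Q}/\Q)$. Indeed, an element $g \in \Gal(\overline{\Q}/\Q)$ restricts to $\tau$ on $K$ if and only if $\widetilde{\tau}^{-1} g$ restricts to the identity on $K$, i.e.\ $\widetilde{\tau}^{-1}g \in \Gal(\overline{\Q}/K)$, i.e.\ $g \in \widetilde{\tau}\,\Gal(\overline{\Q}/K)$.

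Given this identification, the conclusion is immediate from the left-invariance of Haar measure in the definition of Haar measure (the clause $\mu(gB) = \mu(B)$): taking $B = \Gal(\overline{\Q}/K)$ and $g = \widetilde{\tau}$, we get
\[
\mu(E(\tau)) = \mu\bigl(\widetilde{\tau}\,\Gal(\overline{\Q}/K)\bigr) = \mu\bigl(\Gal(\overline{\Q}/K)\bigr).
\]
One should note that $E(\tau)$ is indeed a Borel (in fact basic open, hence also closed) set: it is the set of all extensions of an automorphism of the finite Galois extension $K$, so Lemma \ref{le:basic} applies directly, and $\Gal(\overline{\Q}/K)$ is the special case $\tau = \id$.

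I do not anticipate a serious obstacle here; the only point requiring a word of care is the existence of the lift $\widetilde{\tau}$ and the verification that the coset description of $E(\tau)$ is correct, both of which are standard facts from infinite Galois theory. Alternatively, one could avoid picking a lift and instead observe directly that the $[\Gal(K/\Q)]$ sets $\{E(\tau) : \tau \in \Gal(K/\Q)\}$ partition $\Gal(\overline{\Q}/\Q)$, are pairwise translates of one another, and hence by $\sigma$-additivity and invariance each has measure $\frac{1}{[K:\Q]} = \mu(\Gal(\overline{\Q}/K))$ by Lemma \ref{inf ext}; but the coset argument above is cleaner and more direct.
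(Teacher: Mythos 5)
Your proposal is correct and follows essentially the same route as the paper: both arguments identify $E(\tau)$ as a coset of $\Gal(\overline{\Q}/K)$ (the paper writes it as a right coset $\Gal(\overline{\Q}/K)\tau^{\#}$ using an arbitrary $\tau^{\#}\in E(\tau)$, you as a left coset $\widetilde{\tau}\,\Gal(\overline{\Q}/K)$ of a chosen lift) and then invoke the translation invariance of Haar measure. The only difference is cosmetic, and your version of the coset identity is if anything stated more carefully.
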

\begin{proof}
      If $\tau \in \Gal(K/\Q)$, then let $\tau^*, \tau^{\#} \in E(\tau)$.  Then $\tau^*\left(\tau^{\#}\right)^{-1}\upharpoonright_K = \id_K$.
    Therefore, $\tau^*\left(\tau^{\#}\right)^{-1} \in E(\id_K)$.  Hence,
    $\tau^* \in E(\id_K)\left(\tau^{\#}\right)^{-1}=\Gal(\overline{\Q}/K)\left(\tau^{\#}\right)^{-1}$. So $E(\tau)=\Gal(\overline{\Q}/K)(\tau^{\#})^{-1}$. Thus, $\mu(E(\tau))=\mu(\Gal(\overline{\Q}/K)(\tau^{\#})^{-1})=\mu(\Gal(\overline{\Q}/K))$.
\end{proof}
 \begin{corollary}
    Let $\{K_1,\ldots, K_n\}$ be a finite collection of number fields.  Let $\tau_i \in \Gal(K_i/\Q)$.  Then there is an effective procedure to compute the Haar measure of $\bigcup\limits_{i=1}^n E(\tau_i)$.
 \end{corollary}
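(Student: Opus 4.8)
The plan is to compute $\mu\!\left(\bigcup_{i=1}^{n}E(\tau_i)\right)$ by inclusion--exclusion, which reduces the task to computing $\mu\!\left(\bigcap_{i\in S}E(\tau_i)\right)$ for each nonempty $S\subseteq\{1,\dots,n\}$, and then to show that each such finite intersection is either empty or of the form $E(\tau_S)$ for a single embedding $\tau_S$ of a number field, so that its measure is the reciprocal of a degree that can be found algorithmically. Concretely, finite additivity of $\mu$ gives
\[
\mu\!\left(\bigcup_{i=1}^{n}E(\tau_i)\right)=\sum_{\emptyset\neq S\subseteq\{1,\dots,n\}}(-1)^{|S|+1}\,\mu\!\left(\bigcap_{i\in S}E(\tau_i)\right),
\]
a finite sum, so it suffices to show each summand is effectively computable and then add them up.

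Fix a nonempty $S$ and let $L_S$ be the compositum of $\{K_i:i\in S\}$. An automorphism $\sigma\in\Gal(\overline{\Q}/\Q)$ lies in $\bigcap_{i\in S}E(\tau_i)$ exactly when the restriction $\sigma|_{L_S}$ is an embedding of $L_S$ into $\overline{\Q}$ restricting to $\tau_i$ on each $K_i$; since the $K_i$ ($i\in S$) generate $L_S$, there is at most one such embedding $\tau_S$, and hence $\bigcap_{i\in S}E(\tau_i)$ is empty when no common extension exists and equals $E(\tau_S)$ otherwise. Both alternatives are decidable: by Corollary~\ref{cor:sequence} applied to generators $\theta_i$ of the $K_i$ we obtain a characteristic function and splitting algorithm for $L_S$; iterating Lemma~\ref{FindConjugates} over the successive subfields $\Q(\theta_{i_1})\subseteq\Q(\theta_{i_1},\theta_{i_2})\subseteq\cdots$ computes $d_S=[L_S:\Q]$; and then Lemmas~\ref{FindSingleGen} and~\ref{FindConjugates} produce a primitive element $\gamma$ with $L_S=\Q(\gamma)$ together with the complete list $\gamma^{(1)},\dots,\gamma^{(d_S)}$ of its conjugates over $\Q$. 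Writing each $\theta_i$ as a $\Q$-polynomial $P_i(\gamma)$ (found by linear algebra over $\Q$ inside the computable field $L_S$), the embedding $\gamma\mapsto\gamma^{(k)}$ sends $\theta_i$ to $P_i(\gamma^{(k)})$, which we evaluate in $\overline{\Q}$ and compare with $\tau_i(\theta_i)$; if some $k$ matches all $i\in S$ we have located $\tau_S$, and if none does the intersection is empty.

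It remains to compute $\mu(E(\tau_S))$, for which I would first record that for any number field $F$ and any embedding $\tau\colon F\to\overline{\Q}$ one has $\mu(E(\tau))=\tfrac{1}{[F:\Q]}$. When $F$ is Galois this is the lemma preceding this corollary together with Lemma~\ref{le:notGalois}. In general, let $F^{G}$ be the Galois closure of $F$ over $\Q$; the embedding $\tau$ has exactly $[F^{G}:F]$ extensions to $F^{G}$, each an automorphism of $F^{G}$ since $F^{G}/\Q$ is normal, and every $\sigma\in E(\tau)$ restricts on $F^{G}$ to one of them, so $E(\tau)$ is the disjoint union of the corresponding $[F^{G}:F]$ sets $E(\tau')$. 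By the Galois case each summand has measure $\tfrac{1}{[F^{G}:\Q]}$, whence $\mu(E(\tau))=\tfrac{[F^{G}:F]}{[F^{G}:\Q]}=\tfrac{1}{[F:\Q]}$. Applying this with $F=L_S$ gives $\mu\!\left(\bigcap_{i\in S}E(\tau_i)\right)\in\{0,\ \tfrac{1}{d_S}\}$, with the correct value determined by the procedure of the previous paragraph; plugging the finitely many values into the inclusion--exclusion formula yields the desired rational number.

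I expect the only real obstacle to be the middle step: recognizing $\bigcap_{i\in S}E(\tau_i)$ as a single set $E(\tau_S)$ and extracting $[L_S:\Q]$ and the gluing data algorithmically from the given presentations. Everything else is just finite additivity and left-invariance of the Haar measure, together with the uniform effectivity of field operations on composita provided by Lemma~\ref{le:compext} and Corollary~\ref{cor:sequence} and the conjugate-finding procedure of Lemma~\ref{FindConjugates}.
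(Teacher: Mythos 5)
Your proof is correct, but it proceeds differently from the paper's. The paper fixes a single Galois number field $K$ containing all the $K_i$, refines each $E(\tau_i)$ into a disjoint union of atoms $E(\lambda)$ with $\lambda\in\Gal(K/\Q)$, each of measure $1/[K:\Q]$, and then simply counts the atoms in the union (the two-set case displayed there, with answer $\tfrac{r+m-l}{d}$, is exactly this count; the general case is the same refinement applied to all $n$ sets at once). You instead run inclusion--exclusion and identify each intersection $\bigcap_{i\in S}E(\tau_i)$ as either empty or a single basic set $E(\tau_S)$ over the compositum $L_S$, which is a genuinely different decomposition. Both are effective; the paper's version is lighter because all the measure computation is concentrated in one degree $d=[K:\Q]$ and one counting step, whereas yours requires deciding, for each $S$, whether a common extension $\tau_S$ exists. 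In exchange, your route yields two reusable byproducts the paper leaves implicit: the formula $\mu(E(\tau))=1/[F:\Q]$ for embeddings of arbitrary (not necessarily Galois) number fields, obtained by passing to the Galois closure, and the observation that finite intersections of basic open sets are again basic or empty --- which is precisely what the paper gestures at in the sentence after the corollary about computing measures of all sets in the algebra. One small caution: your appeal to ``the lemma preceding this corollary together with Lemma~\ref{le:notGalois}'' for the Galois case is the right combination, but note that the preceding lemma only treats $\tau$ defined on a Galois field, so your reduction to the Galois closure is genuinely needed if one wants the statement for embeddings of non-normal fields; for the corollary as stated (where the notation $\tau_i\in\Gal(K_i/\Q)$ presumes the $K_i$ Galois, hence $L_S$ Galois), that extra step could be skipped.
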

 \begin{proof}
     By induction, it is enough to show how to compute the Haar measure of a union of two sets $E(\tau_1)$ and $E(\tau_2)$.  Let $K$ be any Galois number field containing $K_1$ and $K_2$.  Let $\lambda_1, \ldots, \lambda_r$ be all the extensions of $\tau_1$ to $\Gal(K/\Q)$.  Similarly, let $\theta_1, \ldots,\theta_m$ be all the extensions of $\tau_2$ to $\Gal(K/\Q)$.  Then $E(\tau_1)=\bigcup_{i=1}^r E(\lambda_i)$ and $E(\tau_2)=\bigcup_{j=1}^m E(\theta_j)$.  Observe that $E(\lambda_i) \cap E(\lambda_j)=\emptyset$ for $i \ne j$.  Similarly, $E(\theta_i)\cap E(\theta_j)=\emptyset$ for $i\ne j$. Let $\{\nu_1,\ldots,\nu_l\}=\{\lambda_1,\ldots,\lambda_r\}\cap \{\theta_1,\ldots,\theta_m\}$.  Finally let $d=[K:\Q]$.  Then the measure 
     \[
     \mu\left(E(\tau_1)\cup E(\tau_2)\right)=\frac{r+m-l}{d}.
     \]
 \end{proof}
It is clear that a similar procedure will effectively compute the measure of a finite intersection of basic open sets and the complements.  So there is an effective way of computing the Haar measure of every set in the algebra.

\section{Computability Theory of Absolute Galois Groups}\label{SecCompGalois}

Because absolute Galois groups contain maps with countable domain we will use the discussion in Section \ref{SecBaire} to define computable and c.e.\ subsets of $\Gal(\overline{\Q}/\Q)$. All methods for representing functions in Section \ref{SecBaire} rely on a version of a graph of the set of functions under consideration.  At this point, we take a closer look at different versions of such graphs.   Below we list what seem to us the four most natural options for describing absolute Galois groups.
\begin{enumerate}
\item The graph consists of pairs, $\left(\alpha,\beta\right)$ where $\beta$ is one image of $\alpha$ under the action of the absolute Galois group, and $\alpha \in \overline{\Q}$.
\item The graph consists of pairs, $\left(\vec{\alpha},\vec{\beta}\right)$ where $\vec{\beta}$ is one image of $\vec\alpha$ under the action of the absolute Galois group, and $\vec{\alpha} \in \overline{\Q}^n$ for some positive integer $n$.
\item The graph consists of sequences $(\alpha,\beta_1, \dots, \beta_n)$, where $\beta_1, \dots, \beta_n$ are \emph{all possible images} of $\alpha$ under the action of the absolute Galois group, and $\alpha \in \overline{\Q}$.
\item The graph consists of sequences $(\vec{\alpha},\vec{\beta}_1, \dots, \vec{\beta}_n)$, where $\vec{\beta}_1, \dots, \vec{\beta}_n$ are \emph{all possible images} of $\vec{\alpha}$ under the action of the absolute Galois group, and $\vec{\alpha} \in \overline{\Q}^n$ for some positive integer $n$.
\end{enumerate}
Thus, we have four possibilities for how to represent absolute Galois groups.  It would at first seem, for instance, that possibility 2 would contain more information than possibility 1, because it also includes information about consistency of images, and it would seem that possibility 4 would contain the most information.  However, by working with a computable sequence generating $\overline{\Q}$ over $\Q$, and using complements of the graphs we will conclude that all versions of the graph are equivalent for some of our purposes (see Lemma \ref{GaloisGraphTuples}).

 In Lemma \ref{GraphEqStrGraph}, we will show that all four sets are Turing equivalent.  However, some differences arise at the level of enumeration reducibility, as we will see (Section \ref{ceagg}).

\subsection{Computable Absolute Galois Groups}

Here we remind the reader that by a computable field we mean a field computable within a fixed computable algebraic closure of $\Q$.  

\begin{lemma}
Let $K \subset \overline{\Q}$ be a computable field.  Then there exists a computable sequence $\{\alpha_i: i \in \mathbb{N}\}$  such that $K=\Q(\{\alpha_i: i \in \mathbb{N}\})$.
\end{lemma}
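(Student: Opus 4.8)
The plan is to produce the sequence by simply listing the elements of $K$ in the order induced by our fixed computable bijection $\sigma\colon\overline{\Q}\to\omega$. The one structural fact we need is that, since $K$ is a \emph{subfield} of $\overline{\Q}$ (characteristic $0$), it contains $\Q$ and is therefore infinite; this is the only place where we use that $K$ is a field rather than merely a computable subset of $\overline{\Q}$.

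First I would recall that, by the standing hypothesis, $\sigma(K)\subseteq\omega$ is a computable set, and by the previous paragraph it is infinite. A standard fact of computability theory is then that $\sigma(K)$ can be enumerated in strictly increasing order by a (total) computable function: put
\[
g(0)=\min \sigma(K),\qquad g(i+1)=\min\{\,n\in\sigma(K): n>g(i)\,\}.
\]
Each search terminates because $\sigma(K)$ is infinite, and each membership test is decidable because $\sigma(K)$ is computable, so $g$ is computable. Now set $\alpha_i:=\sigma^{-1}(g(i))$. Since $\sigma$ is a fixed computable bijection, the map $i\mapsto\sigma(\alpha_i)=g(i)$ is computable, so $\{\alpha_i:i\in\N\}$ is a computable sequence of elements of $\overline{\Q}$, each lying in $K$.

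Finally, because $g$ enumerates \emph{all} of $\sigma(K)$, we have $\{\alpha_i:i\in\N\}=K$ as sets, whence $\Q(\{\alpha_i:i\in\N\})=\Q(K)=K$, which is the desired conclusion. I do not expect any genuine obstacle here: the only point requiring attention is to observe that $K$ is infinite, so that the increasing enumeration of $\sigma(K)$ is total; everything else is routine bookkeeping with $\sigma$. (If one wanted, in addition, to generate uniformly the characteristic functions and splitting algorithms of the finitely generated subfields $\Q(\alpha_1,\dots,\alpha_i)$, one could appeal to Proposition \ref{prop:computable} and Corollary \ref{cor:sequence}, but this is not needed for the bare statement.)
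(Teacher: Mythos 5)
Your proof is correct, but it takes a genuinely different and more elementary route than the paper's. The paper builds the sequence greedily: at each stage it picks the element of $K$ of least code \emph{not already in the field generated so far}, using Lemma \ref{le:compext} to keep each intermediate field $\Q(\alpha_1,\ldots,\alpha_i)$ computable so that the next search is effective. That produces a non-redundant generating sequence (each new $\alpha_{i+1}$ lies outside $\Q(\alpha_1,\ldots,\alpha_i)$), which is stronger than what the lemma asserts, but the search for the next generator terminates only if $K$ has infinite degree over the field generated so far; as written it does not cover the case of a number field (or $K=\Q$). Your proof simply enumerates the computable infinite set $\sigma(K)$ in increasing order and takes preimages, so it needs no auxiliary lemma, handles every computable subfield uniformly, and trivially yields $\Q(\{\alpha_i\})=\Q(K)=K$; the price is that the resulting generating sequence is maximally redundant. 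Since the statement only asks for \emph{some} computable sequence generating $K$ over $\Q$, your argument fully suffices, and the only point you needed to check --- that $K\supseteq\Q$ is infinite so the increasing enumeration is total --- you did check.
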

\begin{proof}
Given an element $\alpha \in \overline{\Q}$ we can determine whether $\alpha \in K$.  Let $\alpha_1 \in K- \Q$ be such an element with the smallest code and let $K_1=\Q(\alpha_1)$.  (Note that this step is effective since $K$ is computable.)  By Lemma \ref{le:compext} we have that $K_1$ is also computable.  Thus proceeding inductively we construct a computable sequence $\alpha_i$ such that $\bigcup_{i=1}^{\infty}\Q(\alpha_1,\ldots, \alpha_i)=K$.
\end{proof}
\begin{lemma}
If $K$ is computable, then there exists a computable set of elements $\{\alpha_i: i \in \mathbb{N}\}$ of $\, \overline \Q$ such that $K(\{\alpha_i: i \in \mathbb{N}\})=\overline{\Q}$.  Further the sequence can be selected so that for each $j \in \mathbb{N}$ we have that $K(\alpha_0,\ldots,\alpha_j)$ is Galois over $K$.
\end{lemma}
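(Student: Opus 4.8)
The plan is to build the sequence $(\alpha_i)_{i\in\mathbb{N}}$ in stages, stage $s$ being responsible for forcing the $s$-th element $\beta_s:=\sigma^{-1}(s)$ of $\overline{\Q}$ into the field generated so far, while keeping every finite piece of the tower Galois over $K$. I would maintain an increasing chain $K\subseteq M_0\subseteq M_1\subseteq\cdots$ of finite Galois extensions of $K$, each carrying (uniformly in $s$) a computable characteristic function, a splitting algorithm, and its degree $[M_s:K]$, and I would take $\alpha_s$ to be a primitive element of $M_s$ over $K$. Since the $M_s$ are nested and $M_s=K(\alpha_s)$, one gets $K(\alpha_0,\dots,\alpha_j)=M_j$ for every $j$, which is Galois over $K$ by construction; and since $\beta_s\in M_s$, it follows that $\overline{\Q}=\bigcup_s\{\beta_s\}\subseteq\bigcup_s M_s=\bigcup_j K(\alpha_0,\dots,\alpha_j)=K(\{\alpha_i:i\in\mathbb{N}\})$, as required.

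To run stage $s$, assume $M_{s-1}$ (with $M_{-1}:=K$), a finite Galois extension of $K$ equipped with the data above, has been constructed. First apply Lemma \ref{FindConjugates} to $K$ and $\beta_s$ to compute the full set $\gamma_1,\dots,\gamma_m$ of conjugates of $\beta_s$ over $K$ (the roots of its minimal polynomial over $K$), and put $M_s:=M_{s-1}(\gamma_1,\dots,\gamma_m)$. Corollary \ref{cor:sequence}, applied to $M_{s-1}$ and the finite sequence $\gamma_1,\dots,\gamma_m$, supplies a characteristic function and splitting algorithm for $M_s$, and for each intermediate field $M_{s-1}(\gamma_1,\dots,\gamma_k)$, uniformly. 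The field $M_s$ is Galois over $K$: it is the compositum of the Galois extension $M_{s-1}/K$ with $K(\gamma_1,\dots,\gamma_m)$, the splitting field over $K$ of the minimal polynomial of $\beta_s$ (separable, since we are in characteristic $0$), and a compositum of Galois extensions is Galois. To obtain $\alpha_s$ I would invoke Lemma \ref{FindSingleGen}, which needs $[M_s:K]$; I would compute it as $[M_{s-1}:K]$ (carried over from the previous stage) times $[M_s:M_{s-1}]$, the latter obtained by adjoining $\gamma_1,\dots,\gamma_m$ to $M_{s-1}$ one at a time and, at each step, using the splitting algorithm of the current intermediate field (from Corollary \ref{cor:sequence} and Lemma \ref{le:compext}) to factor the minimal polynomial of the next generator and read off the relative degree. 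With $[M_s:K]$ in hand, Lemma \ref{FindSingleGen} returns $\alpha_s$ with $M_s=K(\alpha_s)$. Every operation of stage $s$ is effective relative to the data from stage $s-1$, so $s\mapsto\alpha_s$ is computable.

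The step I expect to be the main obstacle is computing the degrees $[M_s:K]$, which is exactly what licenses the use of Lemma \ref{FindSingleGen}: as noted in Section \ref{FixedACl}, for an arbitrary computable subfield of $\overline{\Q}$ one cannot in general decide whether it is a finite extension of $\Q$, nor compute its degree, so Lemma \ref{FindSingleGen} does not apply to a ``black-box'' field. Here the point is that $M_s$ is \emph{not} a black box: it is obtained by explicitly adjoining named algebraic elements to $K$, so its degree can be tracked multiplicatively along the tower using the splitting algorithms furnished by Lemma \ref{le:compext} and Corollary \ref{cor:sequence}. Granting this, the remaining verifications are routine: the Galois property at each finite level is immediate from nestedness together with ``the compositum of Galois extensions is Galois,'' and exhaustion of $\overline{\Q}$ is immediate from $\beta_s\in M_s$. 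Finally, if one wants $\{\alpha_i:i\in\mathbb{N}\}$ to be a computable subset of $\overline{\Q}$ rather than merely the range of a computable sequence, it suffices, at stage $s$, to continue the search in Lemma \ref{FindSingleGen} until it yields a primitive element of $M_s/K$ whose $\sigma$-code exceeds those of $\alpha_0,\dots,\alpha_{s-1}$ (there are infinitely many primitive elements, e.g.\ $\theta+c$ for $c\in K$ whenever $\theta$ is one), which makes membership in $\{\alpha_i\}$ decidable.
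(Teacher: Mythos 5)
Your proposal is correct and follows essentially the same route as the paper: build an increasing tower of finite Galois extensions of $K$ by adjoining, at each stage, all $K$-conjugates of the next element of $\overline{\Q}$, so that the union exhausts $\overline{\Q}$. The only difference is that you are more explicit than the paper about two bookkeeping points --- replacing each stage's generators by a primitive element (via Lemma \ref{FindSingleGen}, with the degree tracked multiplicatively along the tower) so that $K(\alpha_0,\dots,\alpha_j)$ is literally Galois for every $j$, and arranging increasing codes so that $\{\alpha_i\}$ is a computable set rather than just a computable sequence --- both of which the paper's proof leaves implicit.
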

\begin{proof}
The sequence $\alpha_i$ can be constructed inductively in the following  fashion.  Find $\beta_0 \in \overline{\Q}- K$ with the smallest code.  Next find all conjugates of $\beta_0$ over $\Q$. (This is an effective step since $K$ is computable.) Let $N_0$ be the extension  of $K$ obtained by adjoining all conjugates of $\beta_0$ to $K$. Then $N_0/K$ is Galois.   Assume inductively that we have constructed $N_i$ such that $N_i/K$ is Galois.  Now we find $\beta_i \not \in N_i$ with the smallest code and find all of its conjugates over $K$ and adjoin them to $N_i$.  The resulting field $N_{i+1}$ is still Galois over $\Q$. 

Since we always select an element not in $N_i$ with the smallest code, every element of $\overline{\Q} - K$ will eventually be included at some step of the construction.  Thus,  $\overline{\Q}=\bigcup_{i=0}^{\infty}N_i$.

\end{proof}

\begin{definition}[The graph and strong graph of a subset of an absolute Galois group]
Let $S$ be a subset of $\Gal(\overline{\Q}/\Q)$.
\begin{enumerate}
    \item The \emph{graph of $S$}, denoted $\Gamma(S)$, is the set of pairs of the form $(\alpha,\alpha_1)$, where every element of $\overline{\Q}$ appears as $\alpha$, and $\alpha_1$ is an image of $\alpha$ under the action of $S$.  We denote by $S(\alpha)$ the set $\{\sigma (\alpha) : \sigma \in S\}$.
    \item Let $S$ be a subset of $\Gal(\overline{\Q}/\Q)$.  The \emph{strong graph of $S$}, denoted $\Gamma_+(S)$, is the set of tuples of the form $(\vec{\alpha},\vec{\alpha}_2,\ldots , \vec{\alpha}_{k_{\alpha}})$, where every tuple of $\overline{\Q}$ appears as $\vec{\alpha}$ and $\vec{\alpha}_2, \dots, \vec{\alpha}_{k_\alpha}$ constitute a list (in order of increasing index) of \emph{all} images of $\vec{\alpha}$ under the action of the subgroup. 
    \end{enumerate}
\end{definition}

\begin{lemma}\label{parttowhole1}
Let $S$ be a closed subset of $G(\overline{\Q}/\Q)$.  Let $\tau \in G(\overline{\Q}/\Q)$ be such that for every $\alpha \in \overline{\Q}$ we have that $\tau(\alpha) \in S(\alpha)$.  Then $\tau \in S$.
\end{lemma}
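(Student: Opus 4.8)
The plan is to show that $\tau$ lies in the closure $\overline{S}$ of $S$; since $S$ is closed, this immediately yields $\tau\in S$. To do this I would verify that every open set containing $\tau$ meets $S$. The natural tool is the explicit basis for the Haar topology from Lemma \ref{le:smallbasis}: fix an increasing chain $\{\alpha_i\}\subset\overline{\Q}$ with $\Q(\alpha_i)\subset\Q(\alpha_{i+1})$ and $\overline{\Q}=\bigcup_i\Q(\alpha_i)$ (such a chain exists by enumerating $\overline{\Q}$ and repeatedly passing to the compositum with the next element, each step giving a finite, hence simple, extension), and let $T_i$ be the set of embeddings $\tau_i\colon\Q(\alpha_i)\to\overline{\Q}$. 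By Lemma \ref{le:smallbasis}, every open subset of $\Gal(\overline{\Q}/\Q)$ is a union of sets $E(\tau_i)$ with $\tau_i\in T_i$, so it suffices to show that whenever $\tau\in E(\tau_i)$ for some $i$ and some $\tau_i\in T_i$, we have $E(\tau_i)\cap S\neq\emptyset$.

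So suppose $\tau\in E(\tau_i)$, i.e.\ $\tau$ restricts to $\tau_i$ on $\Q(\alpha_i)$; in particular $\tau(\alpha_i)=\tau_i(\alpha_i)$. By hypothesis $\tau(\alpha_i)\in S(\alpha_i)=\{\sigma(\alpha_i):\sigma\in S\}$, so there is some $\sigma\in S$ with $\sigma(\alpha_i)=\tau(\alpha_i)=\tau_i(\alpha_i)$. Since $\sigma$ fixes $\Q$ and $\Q(\alpha_i)$ is generated over $\Q$ by $\alpha_i$, the restriction $\sigma\upharpoonright_{\Q(\alpha_i)}$ is completely determined by the value $\sigma(\alpha_i)$; hence $\sigma\upharpoonright_{\Q(\alpha_i)}=\tau_i$, which is exactly the statement that $\sigma\in E(\tau_i)$. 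Therefore $\sigma\in E(\tau_i)\cap S$, and this basic neighborhood meets $S$. As the $E(\tau_i)$ exhaust a basis of the topology, every open set containing $\tau$ meets $S$, so $\tau\in\overline{S}=S$.

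The only point that requires genuine thought — the rest being bookkeeping about the topology — is the passage from an \emph{arbitrary} basic open neighborhood of $\tau$ to one of the form $E(\tau_i)$, i.e.\ one controlled by a single generator $\alpha_i$. This is precisely where the profinite/Haar structure enters: basic open sets are pulled back from the finite quotients $\Gal(K/\Q)$, these $K$ are cofinal, and each is simple, so ``agreeing with $\tau$ on all of $K$'' is the same as ``agreeing with $\tau$ at a primitive element''. Once that reduction is made, the hypothesis $\tau(\alpha)\in S(\alpha)$ does all the work, and it is harmless that the witnessing automorphism $\sigma\in S$ may depend on $\alpha_i$: closure only demands \emph{some} element of $S$ in each basic neighborhood, and each such neighborhood constrains only one coordinate. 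It is worth flagging in the write-up that we never need the hypothesis simultaneously for all of $\overline{\Q}$, only separately for the generators $\alpha_i$ of the finite subextensions — this is what makes the varying witnesses unproblematic, and it is also the feature that makes the lemma useful as a ``part to whole'' principle in the later sections.
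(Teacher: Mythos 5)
Your proof is correct and uses the same essential ingredients as the paper's: the basis of sets $E(\tau_i)$ from Lemma \ref{le:smallbasis} together with the fact that an embedding of the simple extension $\Q(\alpha_i)$ is determined by the image of $\alpha_i$. The paper merely phrases the argument in contrapositive form (assuming $\tau\in S^c$ and covering $S^c$ by basic sets), whereas you argue directly that every basic neighborhood of $\tau$ meets $S$; the two are the same proof.
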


\begin{proof}
Let $\{\alpha_i: i \in \mathbb{N}\} \subset \overline{\Q}$ be such that $\Q(\alpha_i) \subset \Q(\alpha_{i+1})$ and $\overline{\Q}=\bigcup\limits_{i\in \mathbb{N}}\Q(\alpha_i)$. Let $T_i$ be the collection of all embeddings of $\Q(\alpha_i)$ into $\overline{\Q}$.   Suppose $\tau \not \in S$.  Then $\tau \in S^c$, where $S^c$, the complement of $S$ in $\Gal(\overline{\Q}/\Q)$, is open.  

By Lemma \ref{le:smallbasis}, we know that the sets of the form $E(\nu)$  --- that is, extensions of a single embedding $\nu:\Q(\alpha_i) \to \overline{\Q}$ for some $i$, constitute a basis for Krull topology.  Therefore, for some collection $\Sigma=\{\tau_{i,j} : i,j \in \mathbb{N}\}$ where each $\tau_{i,j} \in T_i$ for some $i$,  we have that $S^c=\bigcup E(\tau_{i,j})$. Hence, if $\tau \in S^c$,  then for some $i, j$ we have that $\tau_{i,j} \in \Sigma$ and $\tau_{|\Q(\alpha_i)}=\tau_{i,j}$ or equivalently $\tau(\alpha_i)=\tau_{i,j}(\alpha_i)$. 

At the same time, for any $\mu \in S$ we have that $\mu$ cannot restrict to any $\tau_{i,j} \in \Sigma$ because $E(\tau_{i,j}) \subset S^c$.  In other words, $\mu(\alpha_i) \ne \tau_{i,j}(\alpha_i)$.  Therefore, if $\tau \in S^c$, we have that $\tau(\alpha_i) \not \in S(\alpha_i)$ contradicting our assumptions on $\tau$.
\end{proof}

We will frequently apply Lemma \ref{parttowhole1} in the case where $S$ is the absolute Galois group of a field.


\begin{lemma}\label{GaloisGraphTuples}
Let $S \subseteq\Gal(\overline{\Q}/\Q)$.  Let $(\alpha_1,\ldots,\alpha_k) \in \overline{\Q}^k$.  Let $A_S$ be the set of $n$-tuples $(\beta_1,\ldots,\beta_n)$ such that there exists $\sigma \in S$ satisfying $\sigma(\alpha_i)=\beta_i$ for $i=1,\ldots, n$.  Then $\Gamma(S) \geq_T A_S$.
\end{lemma}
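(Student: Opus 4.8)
The plan is to reduce the decision problem for $A_S$ to a single oracle query to $\Gamma(S)$, preceded by purely field-theoretic preprocessing that uses no oracle. First I would collapse the tuple $(\alpha_1,\ldots,\alpha_n)$ to a single primitive element. The field $L=\Q(\alpha_1,\ldots,\alpha_n)$, together with its splitting algorithm, is produced effectively by Corollary \ref{cor:sequence} (applied to $\Q$ and the finite sequence $\alpha_1,\ldots,\alpha_n$), so membership in $L$ is decidable; searching $\overline{\Q}$ for the first $\gamma\in L$ with $\alpha_i\in\Q(\gamma)$ for all $i$ --- membership in the computable field $\Q(\gamma)$ being decidable by Lemma \ref{le:compext} --- produces, after finitely many steps (the primitive element theorem guarantees such a $\gamma$ exists), an element $\gamma$ with $\Q(\gamma)=L$. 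With $\gamma$ in hand I would further compute, all without any oracle: polynomials $q_1,\ldots,q_n\in\Q[t]$ with $\alpha_i=q_i(\gamma)$, obtained by solving a linear system in the power basis $1,\gamma,\ldots,\gamma^{d-1}$ of $\Q(\gamma)$, where $d=[\Q(\gamma):\Q]$ (as in the proof of Lemma \ref{le:compext}); and the complete list $\gamma=\gamma_1,\ldots,\gamma_m$ of conjugates of $\gamma$ over $\Q$ (Lemma \ref{FindConjugates}).

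The algorithm computing $A_S$ from $\Gamma(S)$ is then: given a candidate $(\beta_1,\ldots,\beta_n)$, search among $\gamma_1,\ldots,\gamma_m$ for an index $j$ with $q_i(\gamma_j)=\beta_i$ for every $i\le n$. If none exists, declare $(\beta_1,\ldots,\beta_n)\notin A_S$ --- indeed every $\sigma\in\Gal(\overline{\Q}/\Q)$ sends $\gamma$ to some conjugate $\gamma_j$ and then $\sigma(\alpha_i)=q_i(\sigma(\gamma))=q_i(\gamma_j)$, so if the assignment $\alpha_i\mapsto\beta_i$ extended to a member of $\Gal(\overline{\Q}/\Q)$ at all, some $\gamma_j$ would pass the test. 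If such a $j$ is found it is unique, because $\Q(\gamma)=\Q(\alpha_1,\ldots,\alpha_n)$ forces any embedding of $\Q(\gamma)$ into $\overline{\Q}$ to be pinned down by its values on $\alpha_1,\ldots,\alpha_n$; I would then query the oracle whether $(\gamma,\gamma_j)\in\Gamma(S)$, i.e.\ whether $\gamma_j\in S(\gamma)$, and return that answer.

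Correctness is a short verification. If $(\beta_1,\ldots,\beta_n)\in A_S$ via $\sigma\in S$, then $\sigma(\gamma)$ is one of the conjugates $\gamma_j$ and $q_i(\gamma_j)=\sigma(\alpha_i)=\beta_i$, so the search returns this $j$ and $\gamma_j=\sigma(\gamma)\in S(\gamma)$, so the oracle answers yes. Conversely, if for the $j$ returned by the search we have $\gamma_j\in S(\gamma)$, pick $\sigma'\in S$ with $\sigma'(\gamma)=\gamma_j$; then $\sigma'(\alpha_i)=q_i(\gamma_j)=\beta_i$ for all $i$, so $(\beta_1,\ldots,\beta_n)\in A_S$. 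Hence $A_S\le_T\Gamma(S)$, i.e.\ $\Gamma(S)\ge_T A_S$.

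I expect the only subtle point to be the uniqueness-of-lift observation --- that a realizing automorphism's value at $\gamma$ is determined by the tuple $(\beta_1,\ldots,\beta_n)$ --- which is precisely why $\gamma$ must be chosen inside $\Q(\alpha_1,\ldots,\alpha_n)$ rather than merely in some field containing all the $\alpha_i$. Everything else is the standard effective algebraic number theory already recorded in Corollary \ref{cor:sequence}, Lemma \ref{le:compext}, and Lemma \ref{FindConjugates}; notably, the argument uses neither that $S$ is closed nor that $S$ is a subgroup.
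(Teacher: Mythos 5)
Your proposal is correct and follows essentially the same route as the paper: both reduce the tuple to a primitive element $\gamma$ (the paper via Lemma \ref{FindSingleGen}), determine the possible images of $\gamma$ under $S$ by querying $\Gamma(S)$ on its finitely many conjugates, and read off the induced images of $(\alpha_1,\ldots,\alpha_n)$. Your additional insistence that $\Q(\gamma)$ equal $\Q(\alpha_1,\ldots,\alpha_n)$ exactly is harmless but not needed, since even without uniqueness of the witnessing conjugate one can simply query all conjugates matching the candidate tuple and accept if any lies in $S(\gamma)$.
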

\begin{proof}
Using Lemma \ref{FindSingleGen}, we can effectively find an element $\gamma \in \overline{\Q}$ such that $\alpha_i \in \Q(\gamma)$ for all $i=1,\ldots, n$.  Using $\Gamma(S)$, we determine all possible images of $\gamma$ under the action of elements of $S$.  Each potential image of $\gamma$ will determine the image of the $n$-tuple $\alpha_1,\ldots,\alpha_n$.

\end{proof}
\begin{lemma}\label{GraphEqStrGraph} Let $S \subseteq\Gal(\overline{\Q}/\Q)$.  Then $\Gamma(S) \equiv_T \Gamma_+(S)$.
\end{lemma}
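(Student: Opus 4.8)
The plan is to establish the two reductions separately. The direction $\Gamma(S) \leq_T \Gamma_+(S)$ is immediate: the graph is recovered from the strong graph by reading off, for a single element $\alpha \in \overline{\Q}$, the tuple $(\alpha, \alpha_2, \ldots, \alpha_{k_\alpha})$ listing all images of $\alpha$, and then outputting the pairs $(\alpha, \alpha_j)$. So $(\alpha, \beta) \in \Gamma(S)$ iff $\beta$ appears in the $\Gamma_+(S)$-tuple indexed by the length-one tuple $(\alpha)$; this is a straightforward oracle computation. The substance is the reverse direction $\Gamma_+(S) \leq_T \Gamma(S)$, which I would prove by reducing first to the tuple-version of the (weak) graph and then to $\Gamma(S)$ itself.

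For $\Gamma_+(S) \leq_T \Gamma(S)$, fix a tuple $\vec{\alpha} = (\alpha_1, \ldots, \alpha_k) \in \overline{\Q}^k$; we must compute, from $\Gamma(S)$, the complete list of images $\{(\sigma(\alpha_1), \ldots, \sigma(\alpha_k)) : \sigma \in S\}$. First, using Lemma \ref{FindSingleGen} (or Corollary \ref{cor:sequence}), effectively produce a primitive element $\gamma$ with $\alpha_1, \ldots, \alpha_k \in \Q(\gamma)$, and write each $\alpha_i$ as a polynomial in $\gamma$ over $\Q$. Next, query $\Gamma(S)$ at $\gamma$: this yields the set $S(\gamma) = \{\sigma(\gamma) : \sigma \in S\}$ of all images of $\gamma$ — a finite set, since it is contained in the (effectively computable, by Lemma \ref{FindConjugates}) set of conjugates of $\gamma$ over $\Q$. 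For each $\delta \in S(\gamma)$, there is $\sigma \in S$ with $\sigma(\gamma) = \delta$; since $\sigma$ is a field automorphism fixing $\Q$ and $\alpha_i = p_i(\gamma)$ for known $p_i \in \Q[T]$, we get $\sigma(\alpha_i) = p_i(\delta)$, which is computable in $\overline{\Q}$. Conversely, every $\sigma \in S$ sends $\gamma$ to some element of $S(\gamma)$ and hence induces one of these tuples. Thus the $\Gamma_+(S)$-entry for $\vec{\alpha}$ is exactly $\{(p_1(\delta), \ldots, p_k(\delta)) : \delta \in S(\gamma)\}$, computed with one oracle query to $\Gamma(S)$ plus effective field arithmetic. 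This essentially re-runs the argument of Lemma \ref{GaloisGraphTuples}, observing that there the oracle was used only to list all images of the single element $\gamma$, which is precisely what $\Gamma(S)$ provides, and that the passage from the tuple-graph $A_S$ to the full strong-graph entry is just collecting all such tuples over the finitely many choices of $\delta$.

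The only subtle point — the step I would flag as the main obstacle — is ensuring that $S(\gamma)$ as returned by $\Gamma(S)$ is genuinely \emph{all} images and that the list terminates, so that we know we have found every tuple in $\Gamma_+(S)$'s entry. This is handled by the finiteness remarked on at the end of Section \ref{SecBaire}: the images of $\gamma$ under $S \subseteq \Gal(\overline{\Q}/\Q)$ are among the roots of the minimal polynomial of $\gamma$ over $\Q$, of which there are only $[\Q(\gamma):\Q]$ many, all effectively findable via Lemma \ref{FindConjugates}; so after the oracle has confirmed membership or non-membership of each pair $(\gamma, \delta)$ for $\delta$ ranging over this finite set of conjugates, the computation halts with the complete answer. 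With that finiteness in hand, both reductions are total computable functions relative to the respective oracles, giving $\Gamma(S) \equiv_T \Gamma_+(S)$.
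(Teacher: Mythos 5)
Your proof is correct and follows essentially the same route as the paper: the easy direction by restricting to singletons, and the converse by passing to a primitive element $\gamma$ and invoking the argument of Lemma \ref{GaloisGraphTuples}, using the fact that the $\Gamma(S)$ oracle decides membership of $(\gamma,\delta)$ for each of the finitely many conjugates $\delta$ of $\gamma$. Your closing paragraph makes explicit the same point the paper makes when it notes that a Turing oracle for $\Gamma(S)$ gives information about the complement as well.
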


\begin{proof} By letting $\vec{\alpha}$ range over singletons, it is clear that $\Gamma(S) \leq_T \Gamma_+(S)$.  Since a Turing oracle for $\Gamma(S)$ gives information not only about elements of $\Gamma(S)$, but also about elements of its complement, the converse follows immediately from Lemma \ref{GaloisGraphTuples}.
\end{proof}

\begin{proposition}
\label{prop:sg}
    If $G \subseteq \Gal(\overline{\Q}/\Q)$ and $\Gamma_+(G)$ is computably enumerable, then $\Gamma_+(G)$ is computable (and consequently $\Gamma(G)$ is also computable).
\end{proposition}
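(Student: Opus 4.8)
The plan is to exploit the one feature that distinguishes the \emph{strong} graph from the ordinary graph: a single element of $\Gamma_+(G)$ already exhibits \emph{all} images of its first component $\vec{\alpha}$, and, by the very definition of $\Gamma_+$, every tuple $\vec{\alpha}$ from $\overline{\Q}$ occurs as a first component. Fixing the convention that in an element $(\vec{\alpha},\vec{\alpha}_2,\ldots,\vec{\alpha}_{k_{\vec{\alpha}}})$ of $\Gamma_+(G)$ the images are listed in a canonical order (increasing $\sigma$-code, say), this says that $\Gamma_+(G)$ is, modulo the ambient coding of finite tuples of elements of $\overline{\Q}$, the graph of a \emph{total} function $f$, where $f(\vec{\alpha})$ is the unique element of $\Gamma_+(G)$ beginning with $\vec{\alpha}$. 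The proposition is then an instance of the standard fact that a computably enumerable graph of a total function is computable.

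Spelling this out, I would first show $f$ is computable. On input $\vec{\alpha}$, enumerate $\Gamma_+(G)$ (possible since it is c.e.) and halt at the first enumerated tuple whose first component equals $\vec{\alpha}$; such a tuple must be enumerated, because $\Gamma_+(G)$ contains an element with first component $\vec{\alpha}$, so the search terminates and its output is $f(\vec{\alpha})$. Here I use the coding of finite tuples already implicit in the definition of $\Gamma_+$, under which the arity and the individual entries of a coded tuple can be recovered effectively. Then membership in $\Gamma_+(G)$ is decided as follows: given a candidate code $t$, decode it, read off its first component $\vec{\alpha}$, compute $f(\vec{\alpha})$, and output ``yes'' exactly when $t=f(\vec{\alpha})$. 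This is correct because every element of $\Gamma_+(G)$ with first component $\vec{\alpha}$ equals $f(\vec{\alpha})$. If one prefers not to fix a canonical ordering of the image list, one instead recovers from $f(\vec{\alpha})$ the finite set of all images of $\vec{\alpha}$ and checks that the image-part of $t$ enumerates exactly that set --- still a decidable test. (No separate treatment of $G=\varnothing$ is needed, since in that case $f(\vec{\alpha})$ is simply $(\vec{\alpha})$ and the same procedure applies.)

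Finally, $\Gamma(G)$ is computable: this is immediate from Lemma~\ref{GraphEqStrGraph}, which gives $\Gamma(G)\equiv_T\Gamma_+(G)$, but one can also see it directly, since for singletons $(\alpha,\beta)\in\Gamma(G)$ iff $\beta$ occurs among the images recorded in $f(\alpha)$.

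I do not expect a serious obstacle. The one point that genuinely uses the hypothesis is the single-valuedness above, equivalently, the use of the \emph{strong} rather than the ordinary graph: for $\Gamma(G)$ itself, ``c.e.'' does not imply ``computable,'' because from an enumeration of $\Gamma(G)$ one can never be sure that all images of a given $\alpha$ have appeared, and it is precisely the fact that $\Gamma_+(G)$ bundles the complete list of images into each datum that removes this obstruction (with Lemma~\ref{GraphEqStrGraph} then passing computability back to $\Gamma(G)$). The only housekeeping is to pin down, once and for all, the tuple-coding and listing conventions used in defining $\Gamma_+$.
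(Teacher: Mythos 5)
Your proposal is correct and is essentially the paper's own argument: exploit the fact that for each first component $\vec{\alpha}$ the strong graph contains exactly one tuple, enumerate $\Gamma_+(G)$ until that tuple appears, and then decide membership by comparison. You are slightly more careful than the paper about the tuple-coding and the ordering of the image list (comparing image \emph{sets} if no canonical order is fixed), but the underlying idea is identical.
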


\begin{proof}
    Suppose we are given an enumeration of $\Gamma_+(G)$.  For every $\alpha \in \overline{\Q}$, the listing of $\Gamma_+(G)$ contains exactly one tuple of the form $(\alpha, \beta_1,\ldots,\beta_r),$ where $\beta_1,\ldots,\beta_r$ are all the possible images of $\alpha$ under $G$.  Therefore, the enumeration of $\Gamma_+(G)$ will eventually list the tuple corresponding to $\alpha$.  Since all elements of $\Gamma_+(G)$ are of the form above, we can effectively answer the question whether any tuple of the form $(\gamma,\delta_1,\ldots,\delta_s) \in \Gamma_+(G)$.
\end{proof}

Of course, it is possible that two sets can be Turing equivalent while one is c.e.\ and the other is not.  For instance, let $S$ be any set which is computably enumerable but not computable.  Then $S \equiv_T S^c$, but $S^c$ is not enumerable.  However, we believe that the relationship between $\Gamma(G)$ and $\Gamma_+(G)$ is a natural example of this phenomenon.

As we show below, $\Gamma_+(G)^c \leq_e \Gamma(G)^c$.  That is, given an enumeration of the complement of $\Gamma$, we can effectively produce an enumeration of the complement of $\Gamma_+$.  
Indeed, let $S \subset \Gal(\overline{\Q}/\Q)$ and  let $\bar x=(\alpha,\beta_1, \ldots,\beta_r)\in \overline{\Q}^{r+1}$. Then  $\bar x \not \in \Gamma_+(S)$ for one of two reasons.  Either some $\beta_i$ is not a conjugate of $\alpha$ over $\Q$ (and we can effectively determine that) or no element of $S$ sends $\alpha$ to $\beta_i$.  In the last case the pair $(\alpha,\beta_i)$ will appear in the complement of $\Gamma(S)$.  So a listing of the complement of $\Gamma(S)$ will produce a listing of the complement of $\Gamma_+(S)$.

Since, for any $S$, the graph of $S$ is contained in the set of all finite sequences of elements of $\overline{\Q}$, it follows that the graph of any subset of the absolute Galois group of $\Q$ is countable.

From the discussion in Section \ref{SecBaire} it follows that the following theorem holds.

\begin{theorem}\label{EqDnf:Computable} The following conditions on a subset $S$ of $\Gal(\overline{\Q}/\Q)$ are equivalent:
\begin{enumerate}
    \item For each $n \geq 1$ there is a computable function $I_n:\overline{\Q}^n \to \mathbb{N} \times \overline{\Q}^{<\omega}$ such that $I_n(\vec{\alpha}) = (m_{\vec{\alpha}},T_S)$
  if and only if $T_S$ is a sequence of length $m_{\vec{\alpha}}$, and is exactly a sequence of
  images of $\vec{\alpha}$ under elements of $S$.
  \item $\Gamma_+(S)$ is computable.
  \item $\Gamma(S)$ is computable.
  \item There is a computable function (as described in Section \ref{SecBaire})
\[
f_S(d,r):=\left\{\begin{array}{ll} 1 & \mbox{if $B_r(d) \cap {(^\sigma S)} \neq
  \emptyset$}, \\ 0 & \mbox{if $B_{2r}(d) \cap ({^\sigma S}) = \emptyset$}, \\ 0\mbox{
  or } 1 & \mbox{otherwise}\\ \end{array}\right.\]
where 
\begin{enumerate}
    \item $d \in \omega^{<\omega}$,
    \item $r$ is a rational number, and
    \item $B_r(d)$ is the ball of radius $r$ about $d$ in Baire space.
\end{enumerate}
\end{enumerate}
\end{theorem}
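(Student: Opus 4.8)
The plan is to establish $(2)\Leftrightarrow(3)$, $(1)\Leftrightarrow(2)$, and $(3)\Leftrightarrow(4)$, which together show all four conditions are equivalent. The single fact that makes everything work is that each $\alpha\in\overline{\Q}$ has only finitely many conjugates over $\Q$, computable from its minimal polynomial (cf.\ Lemma \ref{FindConjugates}); hence in the sequence-representation of a Galois automorphism every coordinate ranges over a finite, effectively computable set of admissible values. This is exactly the ``finite branching'' situation in which, by the discussion of Section \ref{SecBaire}, the initial-segment, single-graph-entry, and finite-graph-fragment representations all carry the same information. The equivalence $(2)\Leftrightarrow(3)$ then requires no new work: Lemma \ref{GraphEqStrGraph} gives $\Gamma(S)\equiv_T\Gamma_+(S)$, and if either set is computable then the other is too, since each is Turing reducible to the other.

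For $(1)\Leftrightarrow(2)$: if $I_n$ is computable, then to decide whether a tuple $(\vec{\alpha},\vec{\gamma}_1,\dots,\vec{\gamma}_k)$ lies in $\Gamma_+(S)$ one computes $I_n(\vec{\alpha})=(m_{\vec{\alpha}},T_S)$ and checks whether $\{\vec{\gamma}_1,\dots,\vec{\gamma}_k\}$ is precisely the set of entries listed in $T_S$. Conversely, suppose $\Gamma_+(S)$ is computable and let $\vec{\alpha}\in\overline{\Q}^n$ be given. Using Lemma \ref{FindSingleGen} find $\gamma$ with $\Q(\gamma)=\Q(\vec{\alpha})$, factor the minimal polynomial of $\gamma$ over $\Q$ to list all its conjugates, and thereby list the finite set $C$ of all images of $\vec{\alpha}$ under the full group $\Gal(\overline{\Q}/\Q)$; necessarily $S(\vec{\alpha})\subseteq C$. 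Now search through the finitely many tuples whose first coordinate is $\vec{\alpha}$ and whose remaining (pairwise distinct) coordinates are elements of $C$, querying $\Gamma_+(S)$; by definition at least one such tuple lies in $\Gamma_+(S)$, and the set of its non-initial entries is exactly $S(\vec{\alpha})$. Output its length together with the list. This defines a computable $I_n$.

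For $(3)\Leftrightarrow(4)$: a string $d\in\omega^{<\omega}$ of length $\ell$ codes the partial map $\sigma^{-1}(i)\mapsto\sigma^{-1}(d_i)$ for $i<\ell$, and with the standard metric on Baire space, for every rational $r$ the ball $B_r(d)$ is a basic clopen cylinder determined by an initial segment of $d$ whose length is computable from $r$; in particular $B_r(d)=B_{2r}(d)=[d]$ once $r$ is small relative to $\ell$. Whether such a cylinder $[d']$ meets ${}^{\sigma}S$ is exactly the question whether a prescribed tuple belongs to the set $A_S$ of Lemma \ref{GaloisGraphTuples}, and $A_S\leq_T\Gamma(S)$; so from $\Gamma(S)$ we can compute, for any $(d,r)$, both whether $B_r(d)\cap{}^{\sigma}S=\emptyset$ and whether $B_{2r}(d)\cap{}^{\sigma}S=\emptyset$, which suffices to compute a function $f_S$ of the required form. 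Conversely, given $f_S$ and a pair $(\alpha,\beta)$, there are only finitely many conjugate-consistent strings $d$ of length $\sigma(\alpha)+1$ with $d_{\sigma(\alpha)}=\sigma(\beta)$; evaluating $f_S(d,r)$ at a radius $r$ small enough that $B_r(d)=B_{2r}(d)=[d]$ decides whether $[d]$ meets ${}^{\sigma}S$, and $\beta\in S(\alpha)$ holds if and only if this happens for one of these finitely many $d$. Hence $\Gamma(S)$ is computable from $f_S$.

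I expect the main obstacle to be the bookkeeping in $(3)\Leftrightarrow(4)$: one has to pin down the precise meaning of $B_r(d)$ for a finite ``center'' $d$ and verify that the deliberate slack between radius $r$ and $2r$ built into the definition of $f_S$ is inert in our setting. This is exactly where finite branching is used --- small balls around a finite string collapse to the cylinder $[d]$, so the fuzzy condition ``$B_r(d)$ meets $S$ but $B_{2r}(d)$ need not'' never arises at the radii we query, and the condition becomes the sharp one ``$[d]$ meets $S$''. The reconstruction of $S(\vec{\alpha})$ from a decision procedure for $\Gamma_+(S)$ in $(2)\Rightarrow(1)$ is the other place that needs real input --- there one must be certain the splitting algorithm for $\Q$, via Lemmas \ref{FindSingleGen} and \ref{FindConjugates}, produces the correct finite ambient set $C$ of candidate images --- but this is routine given the earlier lemmas.
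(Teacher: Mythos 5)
Your proof is correct and rests on exactly the idea the paper itself invokes: since each $\alpha\in\overline{\Q}$ has finitely many, effectively computable conjugates, the initial-segment, single-pair, and full-image representations of $S$ all carry the same information, which is precisely the finite-branching discussion of Section \ref{SecBaire} that the paper cites in lieu of a written-out proof. Your version simply makes that argument explicit (routing $(2)\Leftrightarrow(3)$ through Lemma \ref{GraphEqStrGraph} and $(3)\Leftrightarrow(4)$ through the collapse of small balls to cylinders), so it matches the paper's intended approach while supplying the omitted details.
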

 \begin{remark} 
  Theorem \ref{EqDnf:Computable} also holds when ``computable" is replaced by ``computable relative to $X$" for any $X \subseteq \omega$ since the proof relativizes.  
  \end{remark}

We now define computability in the natural way.

\begin{definition}\label{dfn:computable} We say that $S \subseteq \Gal(\overline{\Q}/\Q)$ is computable if and only if it satisfies the equivalent conditions of Theorem \ref{EqDnf:Computable}.\end{definition}


\begin{definition}[Automorphism tree of the absolute Galois group of a field]
Let $K$ be a subfield of $\overline{\Q}$.  Let $K \subset F_1 \subset \cdots$ be a tower of fields such that $\bigcup_{i=1}^\infty F_i=\overline{\Q}$ and $F_{i+1}/F_i$ is finite for all $i \geq 1$, and such that the sequence $(F_i : i \in \mathbb{N})$ is uniformly computable in $K$ (See Definition \ref{UnifComp}).   Consider a tree of $\Gal(\overline{\Q} /K)$ constructed in the following fashion.  Let identity on $K$ be the root of the tree.  The level $i$ of the tree will contain all extensions of the identity on $K$ to embeddings of $F_i$ into $\overline{\Q}$, and if $\tau$ on level $i+1$ is a child of $\mu$ on level $i$, then $\mu$ corresponds to an embedding of $F_i$ into $\overline{\Q}$ and $\tau$ is an embedding of $F_{i+1}$ restricting to $\mu$ on $F_i$.
\end{definition}
\begin{proposition}
\label{prop:path}
Every path in the automorphism tree of a field $K$ corresponds to an element of the absolute Galois group of $K$.
\end{proposition}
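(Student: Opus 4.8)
The plan is to take a path $\mu_0 = \id_K, \mu_1, \mu_2, \ldots$ through the automorphism tree of $K$ — so that each $\mu_i$ is an embedding $F_i \to \overline{\Q}$ extending $\id_K$, and $\mu_{i+1}\upharpoonright F_i = \mu_i$ — and glue the $\mu_i$ into a single map $\sigma\colon \overline{\Q}\to\overline{\Q}$. Conceptually, a path is exactly a point of the inverse limit of the system of embeddings $\{\,\tau\colon F_i\to\overline{\Q} : \tau\upharpoonright K = \id_K\,\}$ under the restriction maps, and such a point should be the same thing as an embedding of $\overline{\Q}$ over $K$; since $\overline{\Q}/K$ is algebraic and $\overline{\Q}$ is algebraically closed, every such embedding is an automorphism, hence an element of $\Gal(\overline{\Q}/K)$. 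I would present this directly rather than through the limit formalism. First I would record the consequence of the parent–child relation that $\mu_j\upharpoonright F_i = \mu_i$ for all $j \geq i$, by an immediate induction on $j$ using $\mu_{j+1}\upharpoonright F_j = \mu_j$. Since $\overline{\Q} = \bigcup_i F_i$, every $x \in \overline{\Q}$ lies in some $F_i$, and I would set $\sigma(x) := \mu_i(x)$; the compatibility just noted shows the value is independent of the choice of $i$, so $\sigma$ is a well-defined function.

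Next I would verify that $\sigma$ is a field embedding of $\overline{\Q}$ that fixes $K$ pointwise. Given $x,y \in \overline{\Q}$, choose a single index $i$ with $x,y \in F_i$ (possible since the $F_i$ form a chain); then $\sigma(x+y) = \mu_i(x+y) = \mu_i(x)+\mu_i(y) = \sigma(x)+\sigma(y)$, and similarly $\sigma(xy)=\sigma(x)\sigma(y)$, while $\sigma(1)=\mu_i(1)=1$. Thus $\sigma$ is a ring homomorphism, and hence injective, being a nonzero homomorphism out of a field. For $x \in K$ we have $x \in F_i$ for every $i$ and each $\mu_i$ extends $\id_K$, so $\sigma(x)=x$; therefore $\sigma\upharpoonright K = \id_K$.

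It then remains to see that $\sigma$ is surjective, i.e.\ an automorphism of $\overline{\Q}$. This is the only step that is not purely formal, and I would handle it with the standard fact that an injective field endomorphism of an algebraic extension which fixes the ground field is automatically surjective: given $a \in \overline{\Q}$ with minimal polynomial $p$ over $\Q$, the (finite) set $R$ of roots of $p$ in $\overline{\Q}$ is carried into itself by $\sigma$, because $\sigma$ fixes the coefficients of $p$; an injective self-map of a finite set is a bijection, so $a \in \sigma(R) \subseteq \sigma(\overline{\Q})$. Hence $\sigma \in \Gal(\overline{\Q}/K)$, and this $\sigma$ is the element of the absolute Galois group associated with the path. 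I expect no real obstacle; the two points requiring care are the well-definedness argument (which needs the compatibility along the path and the fact that the $F_i$ exhaust $\overline{\Q}$) and not overlooking that surjectivity uses the algebraicity of $\overline{\Q}$ over $\Q$.
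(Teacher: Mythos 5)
Your proof is correct and follows essentially the same route as the paper's: glue the compatible embeddings along the path into a map $\sigma$ on $\overline{\Q}=\bigcup_i F_i$, check it is a well-defined injective homomorphism fixing $K$, and get surjectivity from the fact that $\sigma$ permutes the finite set of roots of each minimal polynomial. The only cosmetic difference is that the paper phrases surjectivity via conjugates over $K$ landing in some $F_i$, while you use the roots of the minimal polynomial over $\Q$; both amount to an injective self-map of a finite set being a bijection.
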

\begin{proof}
Let $\tau_0=\id, \tau_1,\ldots$ be a path through the automorphism tree of the absolute Galois group of an algebraic field $K$.  By construction $\tau_i$ is an embedding of $F_i$ into $\overline{\Q}$ keeping $K$ fixed and $\tau_i$ restricts to $\tau_{i-1}$ on $F_{i-1}$.  We show that the path defines an automorphism $\tau$ of $\overline{\Q}$ fixing $K$.  Let $\alpha \in \overline{\Q}$.  Then by construction of $\{F_i\}$, we have that for some $j$ the element $\alpha \in F_j$.  Thus $\tau_k(\alpha)$ for $k\geq j$ is defined and $\tau_k(\alpha)=\tau_r(\alpha)$ for any $r,k \geq j$.  We set $\tau(\alpha)=\tau_j(\alpha)$.  

Suppose $\beta \in \overline{\Q}$.  Now $\beta$ has finitely many conjugates over $K$, and they are all contained in some $F_i$, where $\tau_i$ must permute them.  Thus, $\beta$ is in the range of $\tau$.  
The function $\tau$ is clearly an injective homomorphism and by the argument above it is surjective.  Thus, it is an automorphism of $\overline{\Q}$ keeping $K$ fixed.
\end{proof}


\begin{proposition}\label{intersectinggroups}
Let $G_1, G_2$ be two absolute Galois groups (of some fields, but these fields will play no explicit role in the statement or proof). Let $\{\alpha_i: i \in \mathbb{N}\}$ be such that $\Q(\alpha_i) \subset \Q(\alpha_{i+1})$ and $\bigcup_{i\in \omega}\Q(\alpha_i)=\overline{\Q}$. Then $G_1 \cap G_2 \not =\{\id\}$ if and only if  there exists a sequence $\bar \gamma=\{\gamma_i\} \subset \overline{\Q}$ and a collection $\{T_{\bar{\gamma},i}\} \subset \Gal(\overline{\Q}/\Q)$  such that
\begin{enumerate}
    \item  for all $i$ we have that $\gamma_i \in G_1(\alpha_i) \cap G_2(\alpha_i) \subset \overline{\Q}$, 
    \item  for all $i$, we have $T_{\bar{\gamma},i-1}=\{\tau \in G_1 \cap G_2: \tau(\alpha_{i-1})=\gamma_{i-1}\}$, 
\item for all $i$ we have $\gamma_i \in T_{\bar \gamma,i-1}(\alpha_i)$, and
\item for all but finitely many $i$ we have $\gamma_i \neq \alpha_i$.
\end{enumerate} 
\end{proposition}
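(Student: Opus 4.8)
The plan is to prove both implications by moving between a single nontrivial automorphism in $G_1\cap G_2$ and the coherent system of partial images recorded by the sequence $\bar\gamma$.

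For the forward direction, I would fix some $\sigma\in(G_1\cap G_2)\setminus\{\id\}$ and simply set $\gamma_i=\sigma(\alpha_i)$ together with $T_{\bar\gamma,i}=\{\tau\in G_1\cap G_2:\tau(\alpha_i)=\gamma_i\}$. Conditions (1)--(3) are then essentially immediate: $\gamma_i=\sigma(\alpha_i)\in G_1(\alpha_i)\cap G_2(\alpha_i)$ since $\sigma\in G_1\cap G_2$; condition (2) is just the definition of $T_{\bar\gamma,i}$; and condition (3) holds because $\sigma$ itself lies in $T_{\bar\gamma,i-1}$ and sends $\alpha_i$ to $\gamma_i$. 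The one point that needs an argument is condition (4): since $\sigma\ne\id$ there is $\beta\in\overline{\Q}$ with $\sigma(\beta)\ne\beta$, and $\beta\in\Q(\alpha_j)$ for some $j$; if $\sigma(\alpha_i)=\alpha_i$ for some $i\ge j$, then $\sigma$ would fix $\Q(\alpha_i)\supseteq\Q(\alpha_j)\ni\beta$ pointwise, a contradiction, so $\gamma_i\ne\alpha_i$ for all $i\ge j$.

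For the converse, suppose $\bar\gamma$ and $\{T_{\bar\gamma,i}\}$ as described are given. By condition (1), each $\gamma_i$ is a conjugate of $\alpha_i$ over $\Q$, so $\alpha_i\mapsto\gamma_i$ determines a $\Q$-embedding $\nu_i\colon\Q(\alpha_i)\to\overline{\Q}$. I claim the $\nu_i$ are coherent, that is, $\nu_i\upharpoonright_{\Q(\alpha_{i-1})}=\nu_{i-1}$: by condition (3) there is $\tau\in T_{\bar\gamma,i-1}$ with $\tau(\alpha_i)=\gamma_i$, and then $\tau\upharpoonright_{\Q(\alpha_i)}=\nu_i$ while $\tau\upharpoonright_{\Q(\alpha_{i-1})}=\nu_{i-1}$ because $\tau(\alpha_{i-1})=\gamma_{i-1}$ by condition (2). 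Iterating, $\nu_j\upharpoonright_{\Q(\alpha_i)}=\nu_i$ for all $i\le j$, and since $\overline{\Q}=\bigcup_i\Q(\alpha_i)$ the $\nu_i$ glue to an embedding $\nu\colon\overline{\Q}\to\overline{\Q}$ fixing $\Q$; exactly as in the proof of Proposition~\ref{prop:path}, $\nu$ permutes the finite conjugate set of each element and is therefore an automorphism, so $\nu\in\Gal(\overline{\Q}/\Q)$.

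It remains to check that $\nu\in G_1\cap G_2$ and $\nu\ne\id$. For membership, fix $\alpha\in\overline{\Q}$ and pick $i$ with $\alpha\in\Q(\alpha_i)$; by condition (1) there is $\rho\in G_1$ with $\rho(\alpha_i)=\gamma_i$, hence $\rho\upharpoonright_{\Q(\alpha_i)}=\nu_i=\nu\upharpoonright_{\Q(\alpha_i)}$ and so $\nu(\alpha)=\rho(\alpha)\in G_1(\alpha)$. Since $G_1$ is closed (being an absolute Galois group), Lemma~\ref{parttowhole1} yields $\nu\in G_1$, and the same argument with $G_2$ gives $\nu\in G_2$. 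Finally, condition (4) provides some $i$ with $\gamma_i\ne\alpha_i$, whence $\nu(\alpha_i)=\gamma_i\ne\alpha_i$ and $\nu\ne\id$; thus $G_1\cap G_2\ne\{\id\}$. The genuinely delicate step is the gluing/coherence argument combined with the use of Lemma~\ref{parttowhole1} to promote the pointwise image data into actual membership in the closed groups $G_1$ and $G_2$; the remainder is bookkeeping, with a little care needed at the base index of the recursion.
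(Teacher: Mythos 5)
Your proposal is correct and follows essentially the same route as the paper: the forward direction restricts a nontrivial $\sigma\in G_1\cap G_2$ to the $\alpha_i$ (with the same cofinality argument for condition (4)), and the converse builds a coherent path of embeddings through the automorphism tree, glues it via the argument of Proposition~\ref{prop:path}, and promotes pointwise containment to membership in the closed groups via Lemma~\ref{parttowhole1}. Your explicit verification that $\nu(\alpha)\in G_i(\alpha)$ for \emph{every} $\alpha$ (not just the $\alpha_i$) before invoking Lemma~\ref{parttowhole1} is, if anything, slightly more careful than the paper's wording.
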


\begin{proof}
If $\sigma \in \Gal(\overline{\Q}/\Q)$ and $\sigma \neq \id$, then there is a greatest $i$ such that $\sigma(\alpha_i) = \alpha_i$.  Otherwise, let $\{i_j\}$ be a sequence of indices such that $\sigma(\alpha_{i_j})=\alpha_{i_j}$.  Since $\bigcup \Q(\alpha_{i_j})=\overline{\Q}$, we have that $\sigma$ does not move any element of $\overline{\Q}$ and therefore is equal to identity.  

First assume there exists $\sigma \in G_1 
\cap G_2$ with $\sigma  \ne \id$.  Then let $\gamma_i=\sigma(\alpha_i)$.  This satisfies the first requirement.  By the discussion above $\gamma_i \ne \alpha_i$ for all but finitely many $i$. So the sequence $\bar \gamma$ satisfies  the last requirement. Next define $T_{\bar \gamma, i}$ to satisfy the second requirement and observe that
by construction of $\bar \gamma$ we have that $\sigma \in T_{\bar \gamma,i-1}$.  Therefore, $\gamma_i \in T_{\bar \gamma,i-1}(\alpha_i)$ satisfying the third requirement.

Conversely, suppose  there exists a sequence $\bar \gamma \subset \overline{\Q}$ and a collection $\{T_{\bar \gamma,i}\} \subset \Gal(\overline{\Q}/\Q) $ satisfying all the requirements above.    It is enough to show that there exists an automorphism $\sigma \in \Gal(\overline{\Q}/\Q)$ such that $\sigma(\alpha_i)=\gamma_i$, since by Lemma \ref{parttowhole1} such an automorphism $\sigma \in G_1 \cap G_2$.  Further, the last requirement on the sequence $\{\gamma_i\}$ implies that $\sigma \ne \id$.

We define the automorphism $\sigma$ inductively.  Let $\sigma_0=\id$.  Assume we have defined $\sigma_{i-1}:\Q(\alpha_{i-1}) \longrightarrow \overline{\Q}$ by setting $\sigma_{i-1}(\alpha_{i-1})=\gamma_{i-1}$ and let $\sigma_i(\alpha_i)=\gamma_i$.  We claim that the sequence $\{\sigma_i\}$ is a path through an automorphism tree of $\Q$, where $F_i=\Q(\alpha_i)$.  In other words, we claim that $\sigma_{i|F_{i-1}}=\sigma_{i-1}$.

By assumption there exists $\tau \in G_1 \cap G_2$ such that $\tau(\alpha_{i-1})=\gamma_{i-1}$ and $\tau(\alpha_i)=\gamma_i$.  Therefore, if we set $\sigma_i=\tau_{|\Q(\alpha_i)}$ we can conclude that $\sigma_i: \Q(\alpha_i)\longrightarrow \overline{\Q}$ is an embedding and $\sigma_i{|_{\Q(\alpha_{i-1})}}=\sigma_{i-1}$.  Hence, by the definition of an automorphism tree, we have that $\{\sigma_i\}$ is a path.  Thus, by Proposition \ref{prop:path} we have that there exists $\sigma \in \Gal(\overline{\Q}/\Q)$ such that $\sigma(\alpha_i)=\gamma_i$.
\end{proof}

\subsection{Computably Enumerable Absolute Galois Groups}\label{ceagg}

Having defined computable absolute Galois groups, we now proceed to the more difficult situation of computably enumerable absolute Galois groups.  In formulating definitions of c.e.\ absolute Galois groups, we would like to preserve some algorithmic parity between the graph and the strong graph.  One difficulty is that while computability of the graph and the strong graph  ultimately give the same information, the same is not true for computable enumerability.  Indeed, Proposition \ref{prop:sg}, in combination with Lemma \ref{GraphEqStrGraph}, shows that if the graph is computably enumerable but not computable, then the strong graph is not even computably enumerable and, as we discussed above, the enumeration relation connects the complements of the graph and the strong graph.

Since the connection of enumerability is between the complement of the graph and the complement of the strong graph, we adopt the following definition.

\begin{definition}[C.e. Galois groups]
Let $G$ be an absolute Galois group.  Then we say that $G$ is c.e.\ if the complement of its graph is c.e..
\end{definition}

The additional reason for using the complement of the graph, instead of the graph itself, is that to enumerate the field, we would need to know the complement of the graph.
Since the focus of the present paper is on random fields, and not on random groups, we believe that the correct location of the enumerability is in the fields.  One alternative was to require \emph{both} the graph of the group and the field to be computably enumerable, which would collapse enumerability to computability, but does not seem to change many of the results of the present paper.

In general, we will refer to an absolute Galois group as having some algorithmic property (enumerability, computability relative to an oracle, etc.) if and only if the complement of its graph has this property.

Observe that if $G$ is computable, then both its graph and its complement are c.e..

\begin{proposition}\label{FieldsAndGaloisTeq} There is a Turing functional which, given the characteristic function of a subfield $K$ of a fixed computable algebraic closure $\overline{\Q}$, will compute the characteristic function of the graph of $\Gal(\overline{\Q}/K)$, and a Turing functional which will, given the characteristic function of the graph of a closed subgroup of $\Gal(\overline{\Q}/\Q)$, compute the characteristic function of its fixed field.
\end{proposition}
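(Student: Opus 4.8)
The plan is to establish the two directions separately, and in each direction to reduce the infinitary computation to a family of finite Galois-theoretic computations using the computable sequences of generators produced by the lemmas in this section. For the first functional, suppose we are given the characteristic function of a subfield $K \subset \overline{\Q}$. By Corollary \ref{cor:sequence} and Lemma \ref{le:compext} we may, uniformly in (the characteristic function of) $K$, produce a computable tower $F_0 = \Q \subset F_1 \subset \cdots$ with $\bigcup_i F_i = \overline{\Q}$, each $F_i/\Q$ finite, together with splitting algorithms for each $F_i$; indeed, this is exactly the kind of tower used in the definition of the automorphism tree. To decide whether a pair $(\alpha,\beta)$ lies in $\Gamma(\Gal(\overline{\Q}/K))$, I first locate an index $i$ with $\alpha \in F_i$ (effective, since the $F_i$ are uniformly computable and exhaust $\overline{\Q}$), compute the finitely many $K$-conjugates of $\alpha$ via the splitting algorithm for $K$ (Lemma \ref{FindConjugates}), and answer ``yes'' iff $\beta$ is among them. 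Correctness is immediate from infinite Galois theory: $\beta$ is an image of $\alpha$ under some automorphism fixing $K$ iff $\beta$ is a $K$-conjugate of $\alpha$; the nontrivial content is that each such partial assignment extends to a full automorphism, which is Proposition \ref{prop:path} (every path through the automorphism tree of $K$ is realized). This gives the characteristic function of $\Gamma(\Gal(\overline{\Q}/K))$, which by Theorem \ref{EqDnf:Computable} is the computability of the group; the whole procedure is uniform in $K$, so it is a single Turing functional.

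For the second functional, suppose we are given the characteristic function of $\Gamma(S)$ where $S$ is a closed subgroup of $\Gal(\overline{\Q}/\Q)$, and we wish to compute its fixed field $K_S = \{\alpha : \sigma(\alpha) = \alpha \text{ for all } \sigma \in S\}$. Given $\alpha \in \overline{\Q}$, I query $\Gamma(S)$ to enumerate $S(\alpha) = \{\beta : (\alpha,\beta) \in \Gamma(S)\}$; since $\alpha$ has only finitely many conjugates over $\Q$ and $S(\alpha)$ is contained among them, this set is finite and can be computed outright (test each conjugate). Then $\alpha \in K_S$ iff $S(\alpha) = \{\alpha\}$. The point requiring care is that membership in $K_S$ is genuinely about \emph{all} of $S$, not a single $\sigma$: but because $S$ is closed, Lemma \ref{parttowhole1} guarantees that $S(\alpha)$ computed pointwise from the graph really does capture the orbit of $\alpha$ under the whole group, and conversely if $\sigma(\alpha)\neq\alpha$ for some $\sigma\in S$ then $\sigma(\alpha)\in S(\alpha)$ witnesses $\alpha\notin K_S$. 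So $\alpha \mapsto [S(\alpha) = \{\alpha\}]$ is the characteristic function of $K_S$, computed uniformly from $\Gamma(S)$.

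The main obstacle, in both directions, is not the algorithm itself but verifying that the pointwise (``graph-level'') data faithfully reflects the global group-theoretic object --- that a coherent system of finite-level conjugacy choices assembles into an actual automorphism, and dually that the fixed field is detected by orbit sizes on single elements. Both of these are already isolated in this section: Proposition \ref{prop:path} handles the assembly of automorphisms from the tree, and Lemma \ref{parttowhole1} (applied with $S$ an absolute Galois group, the case explicitly flagged after its proof) handles the reduction of ``fixed by all of $S$'' to the pointwise condition, using that absolute Galois groups are closed. A minor bookkeeping point is ensuring the tower $(F_i)$ and all splitting algorithms are produced \emph{uniformly} in the oracle so that a single functional works for every $K$ (resp.\ every closed $S$); this is exactly the uniformity clause of Lemma \ref{le:compext} and Corollary \ref{cor:sequence}, so no new work is needed. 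Once these are in place the two functionals are routine, and Theorem \ref{EqDnf:Computable} lets us phrase the output in terms of computability of the group.
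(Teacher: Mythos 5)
Your proposal is correct and follows essentially the same route as the paper: one direction computes the $K$-conjugates of $x$ via Lemma \ref{FindConjugates} (uniformly in the characteristic function of $K$) and declares these the only pairs of $\Gamma(\Gal(\overline{\Q}/K))$ with first coordinate $x$, while the other reads off the fixed field as $\{\alpha : S(\alpha)=\{\alpha\}\}$ using the finiteness of each orbit. The additional scaffolding you invoke (the uniform tower, Proposition \ref{prop:path}, Lemma \ref{parttowhole1}) is not needed beyond what Lemma \ref{FindConjugates} and the definition of the graph already give, but it is harmless and only makes explicit the extension-of-embeddings fact the paper leaves implicit.
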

\begin{proof}  
Using the characteristic function of $\Gamma(\Gal(\overline{\Q}/K))$, we can determine the set of all $x \in \overline{\Q}$ which are fixed by $\Gal(\overline{\Q}/K)$; that is, the elements of $K$.


We now show that $\Gamma(\Gal(\overline{\Q}/K)) \leq_T K$.
Using Lemma \ref{FindConjugates}, we determine all the conjugates of $x$ over $K$: $x=x_1, \ldots, x_r$.  Then the pairs $(x,x_i)$ are the only pairs from $\Gamma(\Gal(\overline{\Q}/K))$ that have $x$ as its first component.

\end{proof}

\begin{lemma}\label{EnumeratingGroups} There is a $\emptyset'$-enumeration of the computably enumerable absolute Galois groups.
\end{lemma}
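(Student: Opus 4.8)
The goal is to produce a $\emptyset'$-computable listing $(G_e : e \in \mathbb{N})$ such that each $G_e$ is a c.e.\ absolute Galois group (i.e.\ the complement of its graph is c.e.), and every c.e.\ absolute Galois group appears in the list. The natural strategy is to start from the standard $\emptyset'$-effective enumeration of all c.e.\ sets (via the c.e.\ sets $W_e$, which are uniformly $\Sigma^0_1$, hence uniformly enumerable from $\emptyset'$ together with their complements being $\Pi^0_1$) and try to interpret the $e$-th c.e.\ set as ``the complement of the graph of a subset of $\Gal(\overline{\Q}/\Q)$.'' The issue is that an arbitrary c.e.\ set $W_e \subseteq \overline{\Q} \times \overline{\Q}$ need not be the complement of the graph of an absolute Galois group: the candidate set $S_e := \{(\alpha,\beta) : (\alpha,\beta) \notin W_e\}$ must (i) be the graph of \emph{some} set of maps, (ii) in fact be the graph of a \emph{closed subgroup} of $\Gal(\overline{\Q}/\Q)$, and (iii) that subgroup must be an absolute Galois group, i.e.\ of the form $\Gal(\overline{\Q}/K)$ for a subfield $K$. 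By the Fundamental Theorem of Infinite Galois Theory every closed subgroup is $\Gal(\overline{\Q}/K)$ for $K$ its fixed field, so (iii) is automatic once (ii) holds; the real work is (i) and (ii).

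The plan is therefore a \textbf{repair-in-the-limit} construction carried out with a $\emptyset'$ oracle. First I would fix a computable sequence $\{\alpha_i\}$ with $\Q(\alpha_i) \subset \Q(\alpha_{i+1})$ and $\bigcup_i \Q(\alpha_i) = \overline{\Q}$, so that, by Lemma \ref{le:smallbasis}, a closed subset of $\Gal(\overline{\Q}/\Q)$ is determined by, for each $i$, the set of embeddings $\tau \in T_i$ that it contains. For each index $e$, I would use $\emptyset'$ to approximate the complement of $W_e$ and thereby approximate, level by level, a subtree $\mathcal{T}_e$ of the full automorphism tree of $\Q$ (in the sense of the Automorphism tree definition, with $F_i = \Q(\alpha_i)$): at level $i$ we keep exactly those embeddings $\tau : \Q(\alpha_i) \to \overline{\Q}$ all of whose coordinates $(\alpha_j, \tau(\alpha_j))$, $j \le i$, lie in the complement of $W_e$. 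Then I would perform the following closure/correction steps, all $\emptyset'$-computably: (a) if some level of $\mathcal{T}_e$ is empty, or $\mathcal{T}_e$ fails to contain the trivial path (the identity), declare $G_e := \Gal(\overline{\Q}/\overline{\Q}) = \{\id\}$ as a default; (b) close $\mathcal{T}_e$ downward under the group operations — using that $\Gal(K(\alpha_i)/K)$ is finite and computable we can, with a $\emptyset'$ oracle, check at each finite level whether the set of retained embeddings forms a coset structure compatible with being (the level-$i$ quotient of) a subgroup, and if not, replace it by the subgroup it generates; (c) define $G_e$ to be the closed subgroup whose level-$i$ embeddings are the corrected ones, equivalently the set of all paths through the corrected tree, which by Proposition \ref{prop:path} is a genuine subgroup of $\Gal(\overline{\Q}/\Q)$, hence (being closed) equals $\Gal(\overline{\Q}/K_e)$ for $K_e$ its fixed field. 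Finally I would check that $\overline{\Gamma(G_e)}$ is c.e.: a pair $(\gamma,\delta)$ is outside $\Gamma(G_e)$ iff $\delta$ is not a conjugate of $\gamma$ over $\Q$ (decidable) or no retained embedding at the relevant finite level sends $\gamma$ to $\delta$ — and since the retained embeddings at each level stabilize and are obtained by a $\emptyset'$-effective monotone-ish process, this complement is $\emptyset'$-c.e.; but a $\emptyset'$-c.e.\ set that we also need to be genuinely c.e.\ requires a little more care, which brings us to the crux.

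\textbf{Coverage and the main obstacle.} For coverage: if $G$ is any c.e.\ absolute Galois group, then $\overline{\Gamma(G)} = W_e$ for some $e$; for that $e$, the complement of $W_e$ already \emph{is} the graph of a closed subgroup which is an absolute Galois group, so none of the correction steps (a),(b),(c) fire in the limit, and $G_e = G$. The main obstacle I anticipate is keeping the output genuinely \emph{computably enumerable} (not merely $\emptyset'$-c.e.) while doing repair with a $\emptyset'$ oracle: the definition of c.e.\ Galois group demands that $\overline{\Gamma(G_e)}$ be c.e., so it is not enough for the \emph{list} to be $\emptyset'$-effective — each entry must itself be a legitimate c.e.\ group. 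The resolution is that the repair is used only to \emph{select and, if necessary, truncate/default} among candidate indices: when the candidate $\overline{W_e}$ is already the graph of an absolute Galois group, we output it verbatim, so $\overline{\Gamma(G_e)} = W_e$ is c.e.\ by construction; when it is defective, we output a fixed computable default group such as $\{\id\}$ (whose graph-complement is computable, a fortiori c.e.) or, more usefully, $\Gal(\overline{\Q}/\Q(\alpha_0,\ldots,\alpha_{i_0}))$ truncated at the last level $i_0$ where $\mathcal{T}_e$ was still a valid subgroup-tree — again a computable, hence c.e., group. The only place the $\emptyset'$ oracle is essential is in \emph{deciding which of these two cases we are in} and in \emph{computing the truncation level}, both of which are $\Pi^0_2$-type questions about $W_e$ (``is $\overline{W_e}$ closed under the group operations at every level and does it project onto a subgroup of each $\Gal(\Q(\alpha_i)/\Q)$'') answerable from $\emptyset'' $ — so I should either (i) argue the relevant predicate is actually $\emptyset'$-decidable because the subgroup condition at level $i$ only involves the finitely many pairs with entries among the conjugates of $\alpha_0,\ldots,\alpha_i$ and a single $\exists$ over a finite set quantified against a $\Pi^0_1$ ``outside $W_e$'' condition, giving a $\Pi^0_1$ hence $\emptyset'$-decidable level-by-level test, and the global ``every level works'' only needs to be evaluated for the purpose of choosing the truncation point, which a $\emptyset'$-oracle can do step by step; or (ii) weaken the construction so that it never needs the global test, outputting at stage $s$ the longest valid truncation seen so far — this yields a $\emptyset'$-limit of computable groups, and one checks the limit is the intended $G$. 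I expect option (i), with a careful count of quantifiers showing the per-level validity test is $\Pi^0_1$ (equivalently co-c.e., hence $\emptyset'$-computable), to be the cleanest route, and that bookkeeping is the part of the proof that will require the most attention.
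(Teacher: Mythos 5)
Your plan is essentially the paper's proof: start from a $\emptyset'$-listing of the co-c.e.\ sets of pairs $(\alpha_i,y)$ for a fixed tower $\Q(\alpha_0)\subset\Q(\alpha_1)\subset\cdots$, and sieve the candidates by checking, level by level, consistency of images and closure of the induced partial automorphisms under composition --- each level being a finite, $\emptyset'$-decidable check. You go beyond the paper's write-up in one respect: you explicitly observe that the \emph{global} predicate ``every level passes'' is $\Pi^0_2$, hence not obviously $\emptyset'$-enumerable, and you propose to repair defective candidates by outputting a default or truncated group so that every list entry remains a genuine c.e.\ group. The paper's proof does not address this point, so your concern is legitimate rather than a defect of your argument relative to the paper's; but note that your repair is not yet watertight either, since deciding ``verbatim vs.\ default'' for a given index is exactly the global $\Pi^0_2$ question, and a truncated group is a different group, so coverage of $G$ by a single list entry is not guaranteed. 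A clean way to close this for good is to observe that $G$ is a c.e.\ absolute Galois group if and only if its fixed field $K$ is a c.e.\ subfield of $\overline{\Q}$ (from an enumeration of $\Gamma(G)^{c}$ one enumerates $\{\alpha : (\alpha,\beta)\in\Gamma(G)^{c}\text{ for every }\Q\text{-conjugate }\beta\neq\alpha\}$, and conversely from an enumeration of $K$ one enumerates $\Gamma(\Gal(\overline{\Q}/K))^{c}$ by searching for polynomials over $K$ separating $\alpha$ from $\beta$); since the c.e.\ subfields of $\overline{\Q}$ are uniformly enumerable (take the subfield generated by $W_e$), this yields the desired listing with no defective entries.
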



\begin{proof} Recall that we identify a group with its graph, and a group is called computably enumerable just in case the complement of its graph is computably enumerable.  The oracle $\emptyset'$ can enumerate (indeed, compute) not only the computably enumerable sets, but their complements, as well.  There is, then, a $\emptyset'$-enumeration of the co-computably enumerable sets $S$ of pairs $(\alpha_i,y)$ where $\{\alpha_i : i \in \mathbb{N}\}$ are as in Lemma \ref{parttowhole1}.  In other words, we enumerate the set of co-computably enumerable sets of the right type to be absolute Galois groups.

It remains to sieve the enumeration to list only genuine absolute Galois groups.  To this end, using $\emptyset'$, we will check, for each $i$, whether the images for $\alpha_i$ given are consistent with the images given in for $\alpha_j$ with $j<i$.  We also check whether the set of partial automorphisms specified by restriction to $\Q(\alpha_i)$ constitutes a group under composition.  For each $i$, there are only finitely many conditions to check.  This procedure allows $\emptyset'$ to enumerate the computably enumerable absolute Galois groups, as required.
\end{proof}



\section{Randomness}\label{SecRand}



\subsection{Defining Random Fields}

In the definition that follows, the Martin-L\"of tests are made against closed subgroups.  In definitions of random reals, we do not consider any group structure, and do not use closed sets.  However, here the test should be limited to subgroups that correspond to subfields of $\overline{\Q}$, and these are the closed subgroups.

\begin{definition} Let $\mu$ be the normalized Haar measure.
\begin{enumerate}
    \item A $\mu$-test is a uniformly computably enumerable sequence $(S_i: i \in \mathbb{N})$ of closed subgroups of $\Gal(\overline{\Q}/\Q)$ such that $\mu(S_i) < 2^{-i}$.
    \item We say that $\sigma \in\Gal(\overline{\Q}/\Q)$ is \emph{random} if for any $\mu$-test $(S_i: i \in \mathbb{N})$ we have $\sigma \notin \bigcap\limits_{i \in \mathbb{N}} S_i$.
    \item We say that an algebraic field is \emph{random} if and only if it is an infinite extension of $\rat$ and its absolute Galois group contains a random element.
\end{enumerate}
\end{definition}

We recall that when we describe a uniformly computably enumerable sequence of subgroups, we mean that the sequence of complements of the graphs of those groups is uniformly computable.

Again, the choice of definition is not obvious.  We might, from an algebraic perspective, be led to the following alternate definition.

\begin{definition} Let $\mu$ be the normalized Haar measure.
\begin{enumerate}
\item An element of $G = \Gal(\overline{\mathbb{Q}}/\mathbb{Q})$ is said to be \emph{not Haar random} if it is contained in a computably enumerable subgroup of $G$ of measure 0.
\item An algebraic field is said to be \emph{Haar random} if and only if it is an infinite extension of $\rat$ and its absolute Galois group contains a Haar random element.
\end{enumerate}

\end{definition}

We should note that this definition of Haar randomness is reminiscent of the standard definition of "weak $1$-randomness," while the definition of random in the previous definition corresponds more closely with 1-randomness.  Weak 1-randomness is a strictly weaker condition on real numbers than 1-randomness.  However, this distinction collapses in our context.

\begin{proposition} An element $\sigma \in \Gal(\overline{\Q}/\Q)$ is Haar random if and only if it is random.  Consequently, an algebraic field is Haar random if and only if it is random.
\end{proposition}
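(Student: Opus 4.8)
The plan is to show the two notions coincide by proving each implication contrapositive-style, using the fact that for closed subgroups of $\Gal(\overline{\Q}/\Q)$, ``measure $0$'' and ``not open'' are tightly linked, and that a single c.e.\ subgroup of measure $0$ can be unwound into a whole $\mu$-test. First I would handle the easy direction: if $\sigma$ is \emph{not} random, then it lies in $\bigcap_i S_i$ for some $\mu$-test $(S_i)$. The intersection $H=\bigcap_i S_i$ of closed subgroups is itself a closed subgroup, and $\mu(H)\le \mu(S_i)<2^{-i}$ for all $i$ forces $\mu(H)=0$. Moreover $H$ is c.e.\ in the sense of the paper (complement of its graph is c.e.): since each $S_i$ is uniformly c.e., the complement of the graph of $H$ is the union of the complements of the graphs of the $S_i$ (a pair $(\alpha,\beta)$ fails to be in $\Gamma(H)$ iff it fails for some $S_i$), so it is c.e. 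Hence $\sigma$ lies in a c.e.\ subgroup of measure $0$, i.e.\ $\sigma$ is not Haar random. Taking contrapositives, Haar random implies random.

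For the converse, suppose $\sigma$ is not Haar random, so $\sigma \in H$ for some c.e.\ subgroup $H\subseteq G$ with $\mu(H)=0$; I must build a $\mu$-test $(S_i)$ with $\sigma \in \bigcap_i S_i$. The natural candidate is to take $S_i = H$ for all $i$, but this fails the requirement $\mu(S_i)<2^{-i}$ only if we insist on strict membership in the test family as \emph{open-complemented} objects --- actually $\mu(H)=0<2^{-i}$ holds for every $i$, and $H$ is a closed subgroup, and the constant sequence $(H,H,\dots)$ is trivially uniformly c.e. So in fact $(S_i)_{i\in\mathbb N}$ with $S_i=H$ \emph{is} a $\mu$-test, and $\sigma\in\bigcap_i S_i=H$, witnessing that $\sigma$ is not random. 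Taking contrapositives again, random implies Haar random. Thus the two notions agree on elements.

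The final sentence about fields is then immediate: an algebraic field is random iff it is an infinite extension of $\rat$ whose absolute Galois group contains a random element, and Haar random iff it is an infinite extension of $\rat$ whose absolute Galois group contains a Haar random element; since ``random element'' and ``Haar random element'' denote the same subset of $\Gal(\overline{\Q}/\Q)$, the two field-level notions coincide verbatim.

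The only point requiring care --- and the one I would flag as the main obstacle --- is the verification that $H=\bigcap_i S_i$ (or, in the converse, the single group $H$) really qualifies as a c.e.\ absolute Galois group in the precise sense of Definition (C.e.\ Galois groups): one must check that the complement of its graph is c.e., which in turn uses that a closed subgroup is determined by its restrictions to the $\Q(\alpha_i)$ (Lemma~\ref{parttowhole1} and Lemma~\ref{le:smallbasis}), so that membership of a pair $(\alpha,\beta)$ in $\Gamma(H)^c$ can be certified by a finite amount of data about the $S_i$'s. One should also confirm that $H$ is genuinely a subgroup (closed under composition and inverses), which follows because each $S_i$ is, and an intersection of subgroups is a subgroup. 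With these routine checks in place the proposition follows.
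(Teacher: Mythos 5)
Your proposal is correct and follows essentially the same route as the paper's proof: the ``not Haar random $\Rightarrow$ not random'' direction via the constant test $S_i=H$, and the ``not random $\Rightarrow$ not Haar random'' direction by forming $H=\bigcap_i S_i$ and observing that $\Gamma(H)^c=\bigcup_i\Gamma(S_i)^c$ is c.e.\ by uniformity of the test. The only cosmetic differences are that you phrase both implications contrapositively and omit the paper's (unnecessary) case split on whether some $\mu(G_i)=0$; the substantive steps, including the identification of the complement of the graph of the intersection with the union of the complements, are identical.
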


\begin{proof}  Suppose that $H$ is a c.e.\ group of measure zero containing $\sigma$.  Then we may take $S_i = H$ for all $i \in \mathbb{N}$, showing that $\sigma$ is not random.  Suppose now that $\sigma$ is not random but is Haar random.  Then there exists a sequence of  of c.e.\ absolute Galois groups such that $\mu(G_i)<2^{-i}$ and $\sigma \in \bigcap\limits_{i \in \mathbb{N}} G_i$.  If $\mu(G_i) =0$, for some $i$, then $\sigma$ is not Haar random and we have a contradiction.  

Assume now that the measures of all groups are positive.  Let $H=\bigcap\limits_{i \in \mathbb{N}} G_i$.  Then $\mu(H)=0$, and the complement of $\Gamma(H)$ is the union of the complements of the $\Gamma(G_i)$.  We want to show that $H$ is c.e.\ (that is, that the complement of the graph of $H$ is c.e.) to obtain a contradiction. Note that, since the definition of a $\mu$-test required a uniform sequence (that is, a uniform enumeration of the complements of the $\Gamma(G_i)$), we know that the sequence $\left(\Gamma(G_i)^c : i \in \mathbb{N}\right)$ is uniformly computably enumerable.  It follows that the union of this sequence is computably enumerable, and $H$ is computably enumerable.\end{proof}

\subsection{Properties of Random Fields}

The following property is related to the concept of an immune set.

\begin{proposition}
Let $K$ be an infinite extension of $\rat$.  If $K$ contains an c.e.\ subfield of infinite degree, then $K$ is not random.
\end{proposition}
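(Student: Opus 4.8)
The plan is to show that if $K$ contains a computably enumerable subfield $L$ of infinite degree over $\rat$, then there is a $\mu$-test that captures every element of $\Gal(\overline{\Q}/K)$, so no such element is random, whence $K$ is not random. The idea is that $\Gal(\overline{\Q}/K) \subseteq \Gal(\overline{\Q}/L)$, and since $[L:\rat] = \infty$, the group $\Gal(\overline{\Q}/L)$ has Haar measure $0$ (by the Remark following Lemma \ref{le:notGalois}, writing $L = \bigcup_i L_i$ as an increasing union of number fields and using $\mu(\Gal(\overline{\Q}/L)) \le \mu(\Gal(\overline{\Q}/L_i)) = 1/[L_i:\rat] \to 0$). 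So the whole absolute Galois group of $K$ sits inside a measure-zero closed subgroup; it remains to exhibit this as (or inside) a genuine $\mu$-test, i.e.\ a uniformly c.e.\ sequence of closed subgroups of measure $< 2^{-i}$.

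First I would fix an effective enumeration of $L$ (as a c.e.\ subfield of $\overline{\Q}$, this is given) and extract from it an increasing computable-in-$L$ tower of number fields $L_1 \subset L_2 \subset \cdots$ with $\bigcup_i L_i = L$; replacing each $L_i$ by its Galois closure (effective by Lemma \ref{FindConjugates} / Corollary \ref{cor:sequence}) I may assume each $L_i/\rat$ is Galois, and I may pass to a subsequence so that $[L_i:\rat] > 2^{i}$, which is possible exactly because $[L:\rat]$ is infinite. Then set $S_i := \Gal(\overline{\Q}/L_i)$. Each $S_i$ is a closed (indeed clopen) subgroup, $\mu(S_i) = 1/[L_i:\rat] < 2^{-i}$ by Lemma \ref{le:notGalois}, and $\bigcap_i S_i = \Gal(\overline{\Q}/L)$. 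Since $L$ is c.e., its complement in $\overline{\Q}$ is... this is precisely the subtle point below; granting it, $\Gamma(S_i)^c$ is uniformly c.e.\ by Proposition \ref{FieldsAndGaloisTeq}-style reasoning applied level by level, so $(S_i)$ is a $\mu$-test. Any $\sigma \in \Gal(\overline{\Q}/K) \subseteq \Gal(\overline{\Q}/L) = \bigcap_i S_i$ therefore fails the randomness test, so $\Gal(\overline{\Q}/K)$ contains no random element, and $K$ is not random.

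The main obstacle is the uniform computable enumerability of the complements of the graphs $\Gamma(S_i)$: the definition of a $\mu$-test demands a \emph{uniformly c.e.} sequence of (complements of graphs of) closed subgroups, and a priori knowing that $L$ is c.e.\ does not hand us the complements of the $\Gamma(\Gal(\overline{\Q}/L_i))$ for free. The resolution is that each $L_i$ is a \emph{finite} extension of $\rat$ that appears, together with a generator and its degree, effectively in the enumeration of $L$; for a number field $L_i$ presented this way the field is in fact computable (Proposition \ref{prop:computable}, since finitely generated algebraic extensions of $\rat$ have a splitting algorithm by Lemma \ref{le:compext}), so $\Gamma(\Gal(\overline{\Q}/L_i))$ and its complement are computable, uniformly in $i$, by Proposition \ref{FieldsAndGaloisTeq}. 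Thus the sequence $(S_i)$ is not merely uniformly c.e.\ but uniformly computable, and is a bona fide $\mu$-test. I would take care to note that the only place infinitude of $[L:\rat]$ is used is in passing to a subsequence with $[L_i:\rat] > 2^i$, and that c.e.-ness of $L$ (rather than computability) is exactly what is needed to extract the tower effectively enough.
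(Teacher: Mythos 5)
Your overall strategy is sound and is a genuinely different route from the paper's. The paper's proof is one line: it takes the c.e.\ subfield $K^-$, notes $\Gal(\overline{\Q}/K)\subset\Gal(\overline{\Q}/K^-)$ with the latter c.e.\ and of measure zero, and then leans on the earlier equivalence between ``random'' and ``Haar random'' (being captured by a single c.e.\ measure-zero subgroup). You instead build an explicit $\mu$-test from the finite stages of the enumeration of $L$, which has the advantage of not needing the (unproved in the paper's one-liner) assertion that $\Gal(\overline{\Q}/K^-)$ is c.e.\ as a group; you only need uniform computability of the groups $\Gal(\overline{\Q}/L_i)$ for number fields $L_i$ given by generators, which follows from Lemma \ref{le:compext} and Proposition \ref{FieldsAndGaloisTeq}. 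Your observations about where infinitude of $[L:\rat]$ enters and why the $S_i$ end up uniformly computable are correct.

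However, one step as written would fail: replacing each $L_i$ by its Galois closure $L_i^G$ over $\rat$. The Galois closure need not be contained in $L$, let alone in $K$ (e.g.\ if $L=\rat(2^{1/2},2^{1/4},2^{1/8},\dots)$ then $L_i^G$ contains roots of unity that no real field contains). Consequently $\Gal(\overline{\Q}/K)\subseteq\Gal(\overline{\Q}/L_i^G)$ can fail, and $\bigcap_i\Gal(\overline{\Q}/L_i^G)=\Gal(\overline{\Q}/L^G)$ rather than $\Gal(\overline{\Q}/L)$, so your test may no longer capture the elements of $\Gal(\overline{\Q}/K)$ that you need it to capture. The fix is simply to omit that step: Lemma \ref{le:notGalois} (which you already cite) gives $\mu(\Gal(\overline{\Q}/L_i))=1/[L_i:\rat]$ for an arbitrary number field $L_i$, Galois or not, and $\Gal(\overline{\Q}/L_i)$ is already a closed subgroup. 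With the Galois closure step deleted, your argument goes through.
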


\begin{proof}
Let $K^-$ be an infinite degree c.e.\ field contained in $K$.  Then $\Gal(\overline{\Q}/K) \subset \Gal(\overline{\Q}/K^-),$ while $\Gal(\overline{\Q}/K^-)$ is c.e.\ and of measure zero.
\end{proof}

It would be tempting to think that this means that random fields are immune.  However, they do include the infinite c.e.\ set of rationals.  We could, however, define another property related to immunity, that a field contain no c.e.\ subfield of infinite degree.

\begin{lemma}
Let $\sigma \in \Gal(\overline{\Q}/\Q)$.  Then $\sigma$ is random if and only if the fixed field of $\sigma$ is of infinite degree and contains no c.e. subfields.
\end{lemma}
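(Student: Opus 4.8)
The statement has two independent strands. First, the clause that $K^\sigma$ has infinite degree holds for \emph{every} $\sigma \in \Gal(\overline{\Q}/\Q)$: by the lemma identifying $\overline{\langle\sigma\rangle}$ with $\Gal(\overline{\Q}/K^\sigma)$, the group $\Gal(\overline{\Q}/K^\sigma)$ is procyclic, hence has only abelian finite quotients, whereas the absolute Galois group of a number field is non-abelian (it has, for instance, $S_3$ as a quotient by Hilbert irreducibility), and by Artin--Schreier the only finite absolute Galois groups are the trivial one and $\Z/2$, the latter forcing $K^\sigma$ to be real closed and so again of infinite degree over $\Q$; in no case is $K^\sigma$ a number field. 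Second, the real content is: $\sigma$ is random $\iff$ $K^\sigma$ has no computably enumerable subfield of infinite degree over $\Q$ (this is how ``contains no c.e.\ subfields'' must be read, in the sense of the property introduced just before the lemma, since the finite-degree subfields of $K^\sigma$ are automatically computable). I would prove the two remaining implications in contrapositive form.

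For ``$K^\sigma$ contains a c.e.\ subfield of infinite degree $\Rightarrow$ $\sigma$ is not random'': let $L \subseteq K^\sigma$ be c.e.\ with $[L:\Q] = \infty$, fix a computable enumeration $\ell_0, \ell_1, \dots$ of $L$, and set $L_n = \Q(\ell_0,\dots,\ell_n)$. By Corollary~\ref{cor:sequence} the $L_n$ are uniformly computable and come equipped with splitting algorithms, so the degrees $[L_n:\Q]$ are uniformly computable, and $[L_n:\Q]\to\infty$ since $\bigcup_n L_n = L$ has infinite degree. Computably choose indices $n_0 < n_1 < \cdots$ with $[L_{n_k}:\Q] > 2^k$, and put $S_k = \Gal(\overline{\Q}/L_{n_k})$. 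By Lemma~\ref{le:notGalois}, $\mu(S_k) = 1/[L_{n_k}:\Q] < 2^{-k}$; by Proposition~\ref{FieldsAndGaloisTeq} applied uniformly, the graphs $\Gamma(S_k)$ and hence their complements are uniform, so $(S_k)$ is a $\mu$-test. Since $L_{n_k} \subseteq L \subseteq K^\sigma$ and $\sigma$ fixes $K^\sigma$ pointwise, $\sigma \in S_k$ for all $k$, so $\sigma \in \bigcap_k S_k$ and $\sigma$ is not random. (This is the constructive dual of the earlier proposition that a field with a c.e.\ subfield of infinite degree is not random.)

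For ``$\sigma$ is not random $\Rightarrow$ $K^\sigma$ contains a c.e.\ subfield of infinite degree'': take a $\mu$-test $(S_i)$ with $\sigma \in H := \bigcap_i S_i$. Then $H$ is closed, so by the Fundamental Theorem of Infinite Galois Theory $H = \Gal(\overline{\Q}/L)$ with $L$ its fixed field, and $\sigma \in H$ gives $L \subseteq K^\sigma$. From $\mu(H) \le \inf_i \mu(S_i) = 0$ and Lemma~\ref{le:notGalois} we get $[L:\Q] = \infty$. Finally $L$ is c.e.: exactly as in the proof of the proposition identifying Haar-random and random elements, $\Gamma(H) = \bigcap_i \Gamma(S_i)$ (two $\Q$-conjugate elements are conjugate over the compositum $L$ iff they are conjugate over every $L_i$, by comparing minimal polynomials over the increasing fields $L_i$), so $\Gamma(H)^c = \bigcup_i \Gamma(S_i)^c$ is c.e.; and then $x \in L$ iff each of the finitely many nontrivial $\Q$-conjugates $y$ of $x$ (computably listable via the minimal polynomial over $\Q$) satisfies $(x,y) \in \Gamma(H)^c$, a c.e.\ condition. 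Hence $L$ is a c.e.\ subfield of $K^\sigma$ of infinite degree, as required.

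The main obstacle is bookkeeping rather than conceptual: in the first direction the fields $L_n$ exhaust $L$ but their degrees may grow slowly, so one must thin the sequence \emph{computably}, which uses the computability of $[L_n:\Q]$ coming from Corollary~\ref{cor:sequence}; and in the second direction one must correctly pass from a co-c.e.\ graph to a c.e.\ fixed field, which relies on the identity $\Gamma(H)^c = \bigcup_i \Gamma(S_i)^c$. Both points are routine given Corollary~\ref{cor:sequence}, Lemma~\ref{le:notGalois}, and Proposition~\ref{FieldsAndGaloisTeq}, but should be written out rather than asserted.
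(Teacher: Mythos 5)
Your overall route is the same as the paper's: one direction by exhibiting the absolute Galois group of the c.e.\ subfield (or a shrinking sequence of its clopen approximations) as a test containing $\sigma$, the other by passing from a test $(S_i)$ with $\sigma\in\bigcap_i S_i$ to the fixed field of the intersection and checking that it is c.e., of infinite degree, and inside $K^\sigma$. Your first direction is fine and in fact more careful than the paper's one-liner, and the Artin--Schreier remark is correct though not needed. The problem is in the second direction, at the step ``$\Gamma(H)=\bigcap_i\Gamma(S_i)$, since two $\Q$-conjugates are conjugate over the compositum iff they are conjugate over every $L_i$.'' The ``if'' half of that parenthetical is false when the $L_i$ are not nested, and a $\mu$-test is not required to be nested. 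Take an irreducible cubic $f$ with group $S_3$, splitting field $N$, roots $x_1,x_2,x_3$; let $L_1=\Q(x_3)$ and let $L_2$ be the quadratic subfield. Then $x_2$ is conjugate to $x_1$ over $L_1$ (they are the roots of $f(T)/(T-x_3)$) and over $L_2$ (where $f$ stays irreducible), but not over $L_1L_2=N$, which contains $x_1$. So $(x_1,x_2)\in\Gamma(S_1)\cap\Gamma(S_2)\setminus\Gamma(S_1\cap S_2)$, and your enumeration procedure (``$x\in L$ iff every nontrivial $\Q$-conjugate $y$ of $x$ has $(x,y)\in\bigcup_i\Gamma(S_i)^c$'') would never enumerate $x_1$ even though it lies in the fixed field of $S_1\cap S_2$. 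The phrase ``increasing fields $L_i$'' is where the unwarranted nestedness assumption enters; and you cannot repair it by replacing $S_i$ with $\bigcap_{j\le i}S_j$, since uniform enumerability of the complements of the graphs of those finite intersections is the same problem over again. (To be fair, the paper's own proof of the Haar-random equivalence asserts the identity $\Gamma(H)^c=\bigcup_i\Gamma(G_i)^c$ without proof, so you have inherited this gap rather than invented it.)

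The fix is short and worth writing down. Each $S_i$ is closed, hence $S_i=\Gal(\overline{\Q}/L_i)$ where $L_i=\{x:\ (x,y)\in\Gamma(S_i)^c\ \text{for every $\Q$-conjugate } y\ne x\}$; this description makes the sequence $(L_i)$ uniformly c.e.\ from the uniform enumeration of the $\Gamma(S_i)^c$ (that direction of the fixed-point characterization is unproblematic, since only finitely many conjugates need to be witnessed). The fixed field of $H=\bigcap_i S_i$ is exactly the compositum of the $L_i$, i.e.\ the subfield generated by the c.e.\ set $\bigcup_i L_i$, and a subfield generated by a c.e.\ set is c.e.\ by Corollary \ref{cor:sequence} (enumerate the generators and test membership in $\Q(\ell_{j_1},\dots,\ell_{j_k})$ over all finite subsets). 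Infinite degree follows as you say from $\mu(S_i)<2^{-i}$ and Lemma \ref{le:notGalois}, since $[L_i:\Q]>2^i$. With that substitution in place of the graph identity, your argument is complete and agrees with the paper's intended proof.
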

\begin{proof}
If the fixed field of $\sigma$ contains an infinite degree c.e. field $K$,  then $\Gal(\overline{\Q}/K)$ is of measure 0 and c.e..  Further, $\sigma \in \Gal(\overline{\Q}/K)$.  Thus, $\sigma$ is not random.  Conversely, suppose the fixed field of $\sigma$ does not contain any c.e.\ subfields, but $\sigma$ is not random.  Then $\sigma \in \Gal(\overline{\Q}/K)$ for some c.e.\ infinite degree field $K$ and we have a contradiction.
\end{proof}
\begin{corollary}\label{nonrandombyfixedfield}
Let $G$ be a closed subgroup of $\Gal(\overline{\Q}/\Q)$; that is, $G$ is a Galois group of some field. Then $G$ is not random if and only if the fixed field of every non-trivial element of $G$ contains a c.e.\ field.
\end{corollary}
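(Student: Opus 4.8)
The plan is to read ``$G$ is not random'' as ``$G$ contains no random element'' (consistent with the definition of a random field via its absolute Galois group), and to derive the statement from the Lemma immediately preceding it, which characterizes a random $\sigma \in \Gal(\overline{\Q}/\Q)$ as one whose fixed field has infinite degree over $\Q$ and contains no c.e.\ subfield of infinite degree; throughout, ``c.e.\ field'' is understood in this sense (a c.e.\ subfield of infinite degree), in agreement with the preceding proposition. Since the Corollary only concerns \emph{non-trivial} elements, I would first record that the identity is never random: its fixed field is $\overline{\Q}$, which is itself a c.e.\ (indeed computable) field of infinite degree, so by the preceding Lemma $\id$ is not random. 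Hence ``$G$ has no random element'' is equivalent to ``every non-trivial $\sigma \in G$ is non-random.''

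Next I would eliminate the ``infinite degree'' clause of the preceding Lemma for non-trivial elements by showing that the fixed field $K^{\sigma}$ of any $\sigma \neq \id$ automatically has infinite degree over $\Q$. Suppose not, so $K^{\sigma}$ is a number field; then $\Gal(\overline{\Q}/K^{\sigma})$ is the absolute Galois group of a number field, hence has $(\Z/2\Z)^2$ as a continuous quotient (a number field has at least two distinct quadratic extensions) and so is not topologically generated by a single element. But by the earlier lemma that the closure of the cyclic group generated by $\sigma$ is $\Gal(\overline{\Q}/K^{\sigma})$, this group \emph{is} procyclic --- a contradiction. (For $\sigma$ of finite order one can instead invoke Artin--Schreier: such a $\sigma$ has real-closed fixed field of index $2$ in $\overline{\Q}$, hence of infinite degree over $\Q$; the infinite-order case is exactly the procyclicity argument.) With this in hand, the preceding Lemma reads, for non-trivial $\sigma$: $\sigma$ is random iff $K^{\sigma}$ contains no c.e.\ field, i.e.\ $\sigma$ is \emph{not} random iff $K^{\sigma}$ contains a c.e.\ field.

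The two directions of the Corollary now follow by bookkeeping. If the fixed field of every non-trivial $\sigma \in G$ contains a c.e.\ field, then each such $\sigma$ is non-random by the displayed equivalence, and $\id$ is non-random as noted, so $G$ contains no random element, i.e.\ $G$ is not random. Conversely, if $G$ is not random, then every non-trivial $\sigma \in G$ is non-random, so $K^{\sigma}$ contains a c.e.\ field by the same equivalence.

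The only non-routine point --- and the step I expect to be the main obstacle --- is the claim that a non-trivial element of $\Gal(\overline{\Q}/\Q)$ cannot have a fixed field of finite degree over $\Q$. It is needed precisely so that negating the preceding Lemma collapses to ``$K^{\sigma}$ contains a c.e.\ infinite-degree field'' without leaving a spurious ``$K^{\sigma}$ has finite degree'' alternative (a number field contains no c.e.\ field in our sense). Everything else amounts to unwinding the meaning of ``not random'' and quoting the preceding Lemma.
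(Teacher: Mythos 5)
Your proof is correct, and it follows the same basic route the paper intends: the corollary is stated with no proof at all, as an immediate consequence of the preceding lemma characterizing random $\sigma$ by the condition that $K^{\sigma}$ has infinite degree and contains no c.e.\ (infinite-degree) subfield. Your bookkeeping --- reading ``$G$ is not random'' as ``$G$ has no random element,'' disposing of $\sigma=\id$, and then quantifying the lemma over the non-trivial elements --- matches what the authors evidently have in mind.

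What you add, and what the paper silently skips, is the observation that negating the lemma leaves a spurious disjunct: a non-trivial $\sigma$ could a priori fail to be random because $K^{\sigma}$ has \emph{finite} degree over $\Q$, in which case $K^{\sigma}$ contains no c.e.\ field in the relevant (infinite-degree) sense and the ``only if'' direction of the corollary would break. Your argument ruling this out is sound: by the paper's earlier lemma the closure of $\langle\sigma\rangle$ equals $\Gal(\overline{\Q}/K^{\sigma})$, so this group is procyclic, whereas the absolute Galois group of a number field surjects continuously onto $(\Z/2\Z)^2$ (any number field admits two linearly disjoint quadratic extensions) and hence is not procyclic; the Artin--Schreier remark covers the finite-order case just as well. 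So your write-up is not merely a restatement --- it supplies a step that is genuinely needed for the corollary to follow from the lemma as stated, and it does so using only results already proved in the paper plus one standard fact about Galois groups of number fields. The only caveat worth recording explicitly is the one you already flag: ``contains a c.e.\ field'' must be read as ``contains a c.e.\ subfield of infinite degree over $\Q$,'' since every field contains the computable field $\Q$ and the statement would otherwise be vacuous.
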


\begin{corollary}
For any Turing degree $\mathbf{d}$ there is a field of degree $\mathbf{d}$ which is not random.
\end{corollary}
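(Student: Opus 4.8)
The plan is to realize the degree $\mathbf d$ by a field that visibly contains a fixed \emph{computable} infinite-degree extension of $\Q$, so that non-randomness is immediate from the Proposition above saying that an infinite extension of $\Q$ containing a computably enumerable subfield of infinite degree is not random. Fix a computable partition of the rational primes into two infinite computable sets $\{p_0<p_1<\cdots\}$ and $\{q_0<q_1<\cdots\}$, put $F_0=\Q(\sqrt{q_n}:n\in\N)$, and fix $D\subseteq\N$ of Turing degree $\mathbf d$. The field I would use is $K=F_0(\sqrt{p_n}:n\in D)=\Q\bigl(\{\sqrt{q_n}:n\in\N\}\cup\{\sqrt{p_n}:n\in D\}\bigr)$, which is an infinite algebraic extension of $\Q$; when $\mathbf d=\mathbf 0$ one takes $D=\emptyset$, so $K=F_0$.

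The first thing to check is that $F_0$ is a computable field. This rests on the classical ramification bound: given $x\in\overline{\Q}$, use the splitting algorithm for $\Q$ to compute the minimal polynomial $f$ of $x$ over $\Q$, and let $T=\{n:q_n\mid\operatorname{disc}(f)\}$, a finite set effectively computable from $x$. Every prime ramifying in the splitting field $N$ of $f$ divides $\operatorname{disc}(f)$; so if $\sqrt d\in N$ for a squarefree $d$, each prime dividing $d$ ramifies in $\Q(\sqrt d)\subseteq N$ and hence divides $\operatorname{disc}(f)$. By Kummer theory every finite subextension of $F_0$ is generated by square roots of squarefree products of the $q_n$, so $N\cap F_0\subseteq\Q(\sqrt{q_n}:n\in T)$ (absorbing the prime $2$ into $T$ if it is one of the $q_n$). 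Consequently $x\in F_0$ if and only if $x$ lies in the finite field $\Q(\sqrt{q_n}:n\in T)$, whose characteristic function is computable by Lemma~\ref{le:compext} (uniformly, by Corollary~\ref{cor:sequence}); so membership in $F_0$ is decidable. Since $F_0$ is then computable, a fortiori computably enumerable, of infinite degree over $\Q$, and contained in $K$, the Proposition on c.e.\ subfields of infinite degree yields that $K$ is not random.

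Next I would show $K\equiv_T D$, so that $K$ has Turing degree $\mathbf d$. For $D\leq_T K$: since the classes of $-1$ and of the rational primes form an $\F_2$-basis of $\Q^{\times}/(\Q^{\times})^{2}$, Kummer theory gives $\sqrt{p_m}\in K$ if and only if $m\in D$, and this equivalence computes $D$ from the characteristic function of $K$. For $K\leq_T D$: given $x\in\overline{\Q}$, compute $f$ and the finite set $S$ of primes dividing $\operatorname{disc}(f)$ as above; the same ramification argument shows that the subfield of $K$ generated by $x$ is contained in $K_S:=\Q\bigl(\{\sqrt{q_n}:q_n\in S\}\cup\{\sqrt{p_n}:p_n\in S,\ n\in D\}\bigr)\subseteq K$, so $x\in K$ if and only if $x\in K_S$. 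The field $K_S$ is a finite extension of $\Q$ with explicitly listed generators, determined by $S$ (computed from $x$ alone) together with the finitely many values $D\upharpoonright\{n:p_n\in S\}$; by Lemma~\ref{le:compext} its characteristic function is computable from this finite data, so membership of $x$ in $K_S$ is decidable with $D$ as an oracle. Hence $K\leq_T D$ and the corollary follows.

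The step I expect to be the main obstacle is the reduction $K\leq_T D$, where one must be sure that membership of an \emph{arbitrary} algebraic number in $K$ is pinned down by a finite, effectively located portion of $D$. The discriminant bound is exactly what provides this: it caps which generators $\sqrt{p_n}$ can appear in the piece of $K$ relevant to a given $x$, after which Lemma~\ref{le:compext} and the routine computability bookkeeping of Section~\ref{backgdcomp} finish the argument. (One could shorten the field-theoretic part by citing a standard account of the computable field $\Q(\sqrt p:p\ \text{prime})$ and its finite subfields instead of redoing Kummer theory and the ramification estimate, but the above keeps the development self-contained.)
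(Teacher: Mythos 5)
Your proof is correct and follows the same strategy as the paper: exhibit a computable infinite-degree subfield $F_0$ and extend it to a field of Turing degree $\mathbf{d}$, then invoke the proposition that a field containing a c.e.\ infinite-degree subfield is not random. The paper's own proof is a one-line sketch of exactly this; your multiquadratic construction with the discriminant/Kummer bookkeeping simply supplies the details (in particular the verification that $K\equiv_T D$) that the paper leaves implicit.
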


\begin{proof} Let $F_0$ be generated over the rationals by the $2^q$th roots of $2$ (where $q$ ranges over all natural numbers), and $X \in \mathbf{d}$.  Note that $F_0$ is computably enumerable.  Let $(p_i: i \in \mathbb{N})$ be the rational primes.  We can then build an algebraic extension $F_{\mathbf{d}}$ of $F_0$ which includes $\sqrt[(p_{2i+1})]{p_{2i+1}}$ if and only if $i \in X$ and $\sqrt[(p_{2i+2})]{p_{2i+2}}$ if and only if $i \notin X$.  Now $F_{\mathbf{d}}$ will have degree $\mathbf{d}$, and will contain a computably enumerable infinite extension $F_0$ of $\Q$.
\end{proof}

\begin{corollary}
There is an algebraic field of infinite degree which is not random.
\end{corollary}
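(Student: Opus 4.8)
The plan is to produce an explicit witness rather than to argue abstractly. The cleanest choice is $\overline{\Q}$ itself: it is an algebraic extension of $\Q$ of infinite degree, and its absolute Galois group $\Gal(\overline{\Q}/\overline{\Q})$ is the trivial group $\{\id\}$. By the lemma characterizing random automorphisms, an element $\sigma$ is random only if its fixed field has infinite degree over $\Q$ and contains no c.e.\ subfield; in particular $\id$, whose fixed field is all of $\overline{\Q}$ — a computable field, hence one containing plenty of c.e.\ subfields — is not random. Thus $\{\id\}$ contains no random element, and by the definition of a random field, $\overline{\Q}$ is not random. Equivalently, one may invoke Corollary \ref{nonrandombyfixedfield}: since $\{\id\}$ has no non-trivial elements, the characterizing condition there is satisfied vacuously.

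A slightly more flexible version of the same idea avoids even mentioning $\overline{\Q}$: let $K$ be any computable field with $\Q \subsetneq K \subsetneq \overline{\Q}$ of infinite degree over $\Q$ — for instance $K = \Q(\{\sqrt{p} : p \text{ prime}\})$, which has a splitting algorithm and is therefore computable by Proposition \ref{prop:computable}. Every element of $\Gal(\overline{\Q}/K)$ fixes $K$ pointwise, so the fixed field of each non-trivial such automorphism contains the c.e.\ (indeed computable) field $K$; by Corollary \ref{nonrandombyfixedfield}, $K$ is not random, while it is an infinite extension of $\Q$ by construction. One could also simply appeal to the immediately preceding corollary with $\mathbf{d} = \mathbf{0}$, since the field $F_{\mathbf{d}}$ produced there contains the infinite-degree field $F_0$.

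I expect no genuine obstacle: the statement is an immediate consequence of the machinery already assembled (essentially Corollary \ref{nonrandombyfixedfield} applied to a computable field). The only point requiring even minimal care is confirming that an infinite-degree computable extension of $\Q$ exists, which is routine once we recall that $\overline{\Q}$ is computable in our fixed presentation and that adjoining infinitely many square roots of distinct primes yields a field with a splitting algorithm.
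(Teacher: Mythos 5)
Your proposal is correct and matches the paper's own argument, which simply cites Corollary \ref{nonrandombyfixedfield} and observes that any computably enumerable algebraic field of infinite degree (such as your $\Q(\{\sqrt{p}:p\text{ prime}\})$) works, since every non-trivial element of its absolute Galois group has fixed field containing that c.e.\ field. Your alternative witness $\overline{\Q}$ is also valid under the letter of the definitions (the trivial group contains no random element), but it is a degenerate case rather than a genuinely different method.
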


\begin{proof}
By Corollary \ref{nonrandombyfixedfield}, any computably enumerable algebraic field of infinite degree will suffice.
\end{proof}
To prove the existence of random elements in $\Gal(\overline{\Q}/\Q)$, we need the following lemmas.
\begin{lemma}
\label{le:intersection}
    Let $L$ be a subfield of $\overline{\Q}$ and let $x \in \overline{\Q} \setminus L$.  Let $\sigma$ be an embedding of $L$ into $\overline{\Q}$.  Then there is an extension of $\sigma$ to $L(x)$ such that $\sigma(x) \ne x$.
\end{lemma}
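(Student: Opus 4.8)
The plan is to reduce the statement to a count of roots of a minimal polynomial. First I would let $p(T) \in L[T]$ be the minimal polynomial of $x$ over $L$. Since $x \notin L$, we have $\deg p = [L(x):L] \geq 2$, and since we are in characteristic $0$, $p$ is separable.

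Next, apply $\sigma$ coefficientwise to obtain $p^{\sigma}(T) \in \sigma(L)[T] \subseteq \overline{\Q}[T]$. Because $\sigma$ is a field embedding, $p^{\sigma}$ is irreducible over $\sigma(L)$, has the same degree $\geq 2$ as $p$, and is separable; hence it has exactly $\deg p \geq 2$ distinct roots in the algebraically closed field $\overline{\Q}$. I would then invoke the standard fact that the extensions of $\sigma$ to an embedding $L(x) \to \overline{\Q}$ are in bijection with the roots of $p^{\sigma}$ in $\overline{\Q}$, via $\widetilde{\sigma} \mapsto \widetilde{\sigma}(x)$: indeed $L(x) \cong L[T]/(p)$, and each root $y$ of $p^{\sigma}$ determines a well-defined homomorphism $L[T]/(p) \to \overline{\Q}$ sending the class of $T$ to $y$ and restricting to $\sigma$ on $L$.

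To finish, note that among the $\geq 2$ distinct roots of $p^{\sigma}$ at most one can equal $x$, so I would pick a root $y \neq x$ and let $\widetilde{\sigma}$ be the corresponding extension; then $\widetilde{\sigma}$ extends $\sigma$ to $L(x)$ and $\widetilde{\sigma}(x) = y \neq x$, as required.

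\textbf{Main obstacle.} There is essentially no hard step here; the only point needing a moment's care is the edge case in which $x$ itself is one of the roots of $p^{\sigma}$ (for instance when $\sigma = \id$ and $L(x)/L$ is a nontrivial normal extension). This is precisely why the hypothesis $x \notin L$, giving $\deg p \geq 2$, is used: it guarantees that a second, distinct root of $p^{\sigma}$ is available to serve as $\widetilde{\sigma}(x)$.
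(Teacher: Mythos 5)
Your proof is correct and follows essentially the same route as the paper: push the minimal polynomial $p$ of $x$ through $\sigma$, observe it still has $\deg p \ge 2$ distinct roots, and use the standard bijection between extensions of $\sigma$ and roots of $p^{\sigma}$ to choose an image of $x$ other than $x$. If anything, your uniform argument (at most one of the $\ge 2$ roots of $p^{\sigma}$ can equal $x$) is slightly tidier than the paper's case split on whether $\sigma(p)=p$, since it also covers the possibility that $x$ is a root of $\sigma(p)$ even when $\sigma(p)\ne p$.
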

\begin{proof}
    Let $P(T)$ be the monic irreducible polynomial of $x$ over $L$. Then $\deg P(T)\geq 2$.  If $\sigma(P)=P$, then we can set $\sigma(x)$ to be any root of $P(T)$ not equal to $x$.  If $\sigma(P) \ne P$, then we can set $\sigma(x)$ to be any root of $\sigma(P)$.
\end{proof}
\begin{lemma}
\label{le:notmove}
    Let $L$ be a finitely generated extension of $\Q$.  Let $\sigma$ be an embedding of $L$ into $\overline{\Q}$.  Then there exists infinitely many $x \in \overline{\Q}\setminus L$ such that $\sigma$ can be extended to $L(x)$ by setting $\sigma(x)=x$.
\end{lemma}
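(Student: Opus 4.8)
The plan is to use that $L\subseteq\overline{\Q}$, being finitely generated and algebraic over $\Q$, is a number field; set $n=[L:\Q]$. The guiding observation is this: if $x\in\overline{\Q}$ has minimal polynomial $P(T)$ over $L$ with all coefficients fixed by $\sigma$, then the $L$-algebra homomorphism $L[T]/(P(T))\to\overline{\Q}$ that acts as $\sigma$ on $L$ and sends the class of $T$ to $x$ is well defined, because $\sigma(P)=P$ and $P(x)=0$; and this map is exactly an extension of $\sigma$ to $L(x)=L[T]/(P(T))$ with $\sigma(x)=x$. Since every field embedding fixes the prime field $\Q$ pointwise, it therefore suffices to exhibit infinitely many $x\in\overline{\Q}\setminus L$ whose minimal polynomial over $L$ already lies in $\Q[T]$.

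To produce such elements, for each prime $r>n$ I would take $x_r$ to be a root of $T^r-2$. By Eisenstein's criterion $T^r-2$ is irreducible over $\Q$, so $[\Q(x_r):\Q]=r$; in particular $x_r\notin L$, since otherwise $\Q(x_r)\subseteq L$ and $r$ would divide $n$, contradicting $r>n$. Moreover $\gcd(r,n)=1$ forces $[L(x_r):L]=r$: from $[L(x_r):L]\,n=[L(x_r):\Q(x_r)]\,r$ the prime $r$ divides $[L(x_r):L]$, which is at most $[\Q(x_r):\Q]=r$. Hence the minimal polynomial of $x_r$ over $L$ has degree $r$ and divides $T^r-2$, so it equals $T^r-2\in\Q[T]$.

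Combining the two steps, $\sigma$ fixes the coefficients of $T^r-2$ and $x_r$ is a root of it, so $\sigma$ extends to an embedding of $L(x_r)$ into $\overline{\Q}$ with $\sigma(x_r)=x_r$. The $x_r$, as $r$ ranges over the infinitely many primes exceeding $n$, are pairwise distinct (they have distinct degrees $r$ over $\Q$) and all lie in $\overline{\Q}\setminus L$, which gives the conclusion.

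I do not expect a serious obstacle here; the only substantive point is the degree bookkeeping ensuring that the minimal polynomial over $L$ descends to $\Q[T]$, and that is handled by choosing the degree coprime to $[L:\Q]$. A slightly less elementary alternative would be to extend $\sigma$ to some $\widetilde\sigma\in\Gal(\overline{\Q}/\Q)$ and take $x$ in the fixed field $\overline{\Q}^{\widetilde\sigma}$, after verifying that this field is infinite over $\Q$ (which holds because $\Gal(\overline{\Q}/\overline{\Q}^{\widetilde\sigma})$ is procyclic, whereas absolute Galois groups of number fields are not); the explicit construction above avoids appealing to this fact.
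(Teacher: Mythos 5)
Your proof is correct and follows essentially the same route as the paper: both arguments choose $x$ whose degree over $\Q$ is coprime to $[L:\Q]$, deduce that the minimal polynomial of $x$ over $L$ already lies in $\Q[T]$ (equivalently, that $\Q(x)$ and $L$ are linearly disjoint), and then extend $\sigma$ by fixing $x$. Your instantiation with roots of $T^r-2$ for primes $r>[L:\Q]$ just makes the witnesses explicit, whereas the paper passes to the Galois closure of $L$ and argues abstractly; the content is the same.
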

\begin{proof}
    Without loss of generality assume that $L$ is Galois over $\Q$ so that every irreducible polynomial over $\Q$ either remains prime over $L$ or splits completely.  Then all polynomials irreducible over $\Q$ of degree prime to $[L:\Q]$ will remain prime over $L$.  Let $x \in \overline{\Q}$ be such that its monic irreducible polynomial over $\Q$ remains prime over $L$. Then $\Q(x)$ and $L$ are linearly disjoint over $\Q$.  If $\alpha \in 
    \overline{\Q}$ is such that $L=\Q(\alpha)$ then the monic irreducible polynomials of $\alpha$ over $\Q$ and $\Q(x)$ are the same.  Therefore, there exists an embedding $\tau$ of $L(x)$ into $\overline{\Q}$ such that $\tau$ is the identity on $\Q(x)$ and $\tau(\alpha)=\sigma(\alpha)$.
\end{proof}

\begin{proposition}[Existence of random elements]\label{ExRandomElts}
There exist a continuum of random elements $\sigma \in \Gal(\overline{\Q}/\Q)$.
\end{proposition}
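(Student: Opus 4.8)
The plan is to build, by a branching construction along automorphism trees, a perfect family of elements of $\Gal(\overline{\Q}/\Q)$ each of which is random. Recall from the characterization proved above that $\sigma\in\Gal(\overline{\Q}/\Q)$ is random precisely when its fixed field $K^{\sigma}$ has infinite degree over $\Q$ and contains no c.e.\ subfield of infinite degree. Since there are only countably many c.e.\ subsets of $\omega$, there are only countably many c.e.\ subfields of $\overline{\Q}$; I would let $E_{0},E_{1},\dots$ list those of infinite degree over $\Q$ (equivalently, by Lemma~\ref{EnumeratingGroups} together with the measure computations of Section~\ref{HaarSec}, the groups $\Gal(\overline{\Q}/E_{k})$ list exactly the c.e.\ absolute Galois groups of Haar measure $0$). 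It then suffices to produce continuum-many $\sigma\in\Gal(\overline{\Q}/\Q)$ with $[K^{\sigma}:\Q]=\infty$ and $E_{k}\not\subseteq K^{\sigma}$ for every $k$.

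Fix a tower $\Q=F_{0}\subset F_{1}\subset\cdots$ with each $F_{i+1}/F_{i}$ finite and $\bigcup_{i}F_{i}=\overline{\Q}$, and work in the associated automorphism tree of $\Q$. I would construct finite embeddings $\{\sigma_{s}:s\in 2^{<\omega}\}$, where each $\sigma_{s}$ is an embedding of some $F_{n_{s}}$ into $\overline{\Q}$ fixing $\Q$, with $\sigma_{\emptyset}=\id$, $n_{s}<n_{s0},n_{s1}$, $\sigma_{s}\subseteq\sigma_{s0}$, $\sigma_{s}\subseteq\sigma_{s1}$, and $\sigma_{s0},\sigma_{s1}$ disagreeing on some element. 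Fix a bookkeeping in which each requirement $R_{k}\colon E_{k}\not\subseteq K^{\sigma}$ and the requirement $P\colon[K^{\sigma}:\Q]=\infty$ is assigned to infinitely many levels. At a node $s$ of length $\ell$, with $\sigma_{s}\colon F_{n_{s}}\to\overline{\Q}$ given, I would first satisfy the requirement assigned to level $\ell$ and then branch. If that requirement is $R_{k}$: since $[E_{k}:\Q]=\infty>[F_{n_{s}}:\Q]$, pick $x\in E_{k}\setminus F_{n_{s}}$; by Lemma~\ref{le:intersection}, extend $\sigma_{s}$ to $\sigma'$ on $F_{n_{s}}(x)$ with $\sigma'(x)\neq x$, then extend $\sigma'$ further to an embedding $\sigma''$ of some $F_{m}\supseteq F_{n_{s}}(x)$. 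If instead it is $P$: by Lemma~\ref{le:notmove} (applied with $L=F_{n_{s}}$) there are infinitely many $y\in\overline{\Q}\setminus F_{n_{s}}$ to which $\sigma_{s}$ extends by $\sigma_{s}(y)=y$; choose one such $y$ and extend $\sigma_{s}$ to an embedding $\sigma''$ of some $F_{m}\supseteq F_{n_{s}}(y)$, so that the fixed field of $\sigma''$ properly contains that of $\sigma_{s}$. In either case, to branch I would choose $z\in\overline{\Q}$ of degree $\geq 2$ over $F_{m}$ (possible since $\overline{\Q}/F_{m}$ is infinite); then $\sigma''(\mathrm{minpoly}_{F_{m}}(z))$ is irreducible over $\sigma''(F_{m})$ and, being separable, has at least two distinct roots, so I can send $z$ to two distinct such roots and extend, obtaining embeddings $\sigma_{s0},\sigma_{s1}$ of fields $F_{n_{s0}},F_{n_{s1}}\supseteq F_{m}(z)$ with $n_{s0},n_{s1}>n_{s}$, which disagree on $z$.

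For $b\in 2^{\omega}$ set $\sigma_{b}=\bigcup_{s\subset b}\sigma_{s}$. Since $n_{s}\to\infty$ along $b$, $\sigma_{b}$ is (the automorphism associated to) a path through the automorphism tree, namely $j\mapsto\sigma_{b}|_{F_{j}}$, so $\sigma_{b}\in\Gal(\overline{\Q}/\Q)$ by Proposition~\ref{prop:path}. Every $R_{k}$ is met along $b$, so $E_{k}\not\subseteq K^{\sigma_{b}}$; and $P$ is met at infinitely many levels, producing a strictly increasing chain of (finite-degree) subfields of $K^{\sigma_{b}}$, so $[K^{\sigma_{b}}:\Q]=\infty$, since an infinite strictly increasing chain of subfields cannot lie inside a finite extension of $\Q$. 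Hence $\sigma_{b}$ is random. Finally, if $b\neq b'$ they split at some node, where the relevant extensions disagree on the corresponding $z$, so $\sigma_{b}\neq\sigma_{b'}$; thus $b\mapsto\sigma_{b}$ is injective and there are at least $2^{\aleph_{0}}$ random elements, and no more since $\lvert\Gal(\overline{\Q}/\Q)\rvert=2^{\aleph_{0}}$.

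The construction itself is routine once this framework is in place; the delicate point is the interaction between the requirements $R_{k}$ and $P$, and Lemma~\ref{le:notmove} is exactly what resolves it, guaranteeing that no finite commitment made to diagonalize against some $E_{k}$ can obstruct making $K^{\sigma}$ of infinite degree. (Alternatively, one can bypass the explicit construction entirely: there are only countably many $\mu$-tests, so the set of non-random elements is a countable union of closed null sets, hence a Borel null set; its complement then has Haar measure $1$, hence is an uncountable Borel subset of the Polish group $\Gal(\overline{\Q}/\Q)$ and so, by the perfect set property for Borel sets, has cardinality $2^{\aleph_{0}}$.)
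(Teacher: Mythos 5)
Your proof is correct, and its core is the same diagonalization the paper uses: enumerate the infinite-degree c.e.\ subfields, use Lemma~\ref{le:intersection} to move an element of each one and Lemma~\ref{le:notmove} to fix infinitely many fresh elements, then invoke Proposition~\ref{prop:path} to assemble the resulting coherent chain of embeddings into an automorphism. Where you genuinely diverge is in how the cardinality $2^{\aleph_0}$ is extracted. The paper runs a single linear construction and then remarks that ``at each stage there were infinitely many options for $x_s$, each leading to a different field $L_s$,'' which leaves the reader to verify that distinct choice sequences really yield distinct automorphisms. Your perfect-tree organization makes this airtight: the explicit branching step, sending a fixed $z$ to two distinct conjugates over $F_m$, guarantees $\sigma_b(z)\neq\sigma_{b'}(z)$ whenever $b\neq b'$, so injectivity of $b\mapsto\sigma_b$ is immediate. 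Your parenthetical alternative is a genuinely different route the paper does not take: since there are only countably many $\mu$-tests and each $\bigcap_i S_i$ is a closed null set, the non-random elements form an $F_\sigma$ null set, so the randoms form a measure-one Borel set, which is uncountable (Haar measure here is non-atomic) and hence of cardinality $2^{\aleph_0}$ by the perfect set property. That argument is shorter and proves the proposition as stated, though it is less informative than the construction, which additionally exhibits random elements whose fixed fields are explicitly of infinite degree --- the feature the paper immediately exploits in the corollary producing a random field.
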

\begin{proof}
Let $\{K_i: i \in \mathbb{N}\}$ be an enumeration (perhaps not computable) of all infinite degree c.e.\ subfields of $\overline{\Q}$.

Stage 1: Pick an element $x_1 \in K_1 - \Q$.  Define $\sigma(x_1)=y_1,$ where $y_1 \ne x_1$ is a conjugate of $x_1$ over $\Q$. Set $L_1$ to be the Galois closure over $\Q$ of $\Q(x_1)$, and extend $\sigma$ to $L_1$ in the natural way.

Stage 2: Pick an element $x_2 \not \in L_1$ such that $L_1$ is linearly disjoint from $\Q(x_2)$ over $\Q$ and set $\sigma(x_2)=x_2$. (Such an $x_2$ exists by Lemma \ref{le:notmove}.)   Set $L_2$ to be the Galois closure of  $\Q(x_1,x_2)$ over $\Q$, and extend $\sigma$ to $L_2$ in the natural way.

Stage $2n+1$:  Let $K_m$ be an infinite degree c.e.\ field with the smallest index that has not appeared in the construction so far.   Assume inductively that $\sigma$ is defined for elements of  $L_{2n}=\Q(x_1,\ldots,x_{2n})$.  Find an element $x_{2n+1} \in K_m - L_{2n}$.  This can always be done because $K_m$ is not finitely generated. By Lemma \ref{le:intersection} there is an extension of $\sigma$ to $L_{2n}(x_{2n+1})$ such that $\sigma(x_{2n+1})\ne x_{2n+1}$. Set $L_{2n+1}$ to be the Galois closure of $L_{2n}(x_{2n+1})$ over $\Q$, extending $\sigma$ to $L_{2n+1}$ as before.

Stage $2n+2$: Pick an element $x_{2n+2} \in \overline{\Q} - L_{2n+1}$ so  that there is an extension of $\sigma$ to $L_{2n+1}(x_{2n+2})$ such that $\sigma(x_{2n+2})=x_{2n+2}$. (We can find such an element by Lemma \ref{le:notmove}.) Set $L_{2n+2}$ to be the Galois closure of $L_{2n+1}(x_{n+2})$ over $\Q$, extending $\sigma$ to $L_{2n+2}$ as before.

Now we have $\sigma$ defined on $\bigcup_{n=1}^{\infty}L_n$.  If $\bigcup_{n=1}^{\infty}L_n \ne \overline{\Q}$, then extend $\sigma$ to $\overline{\Q}$.  

In the even stages, we have arranged that $\sigma$ fixes an infinite extension of $\Q$, while in the odd stages we have arranged that $\sigma$ does not fix any infinite degree c.e.\ field, so $\sigma$ is random.  Moreover, at each stage $s$ of the construction, we had infinitely many options for the choice of $x_s$ --- indeed, there are infinitely many options for $x_s$ each of which leads to a different field $L_s$ --- so the total number of random automorphisms that can be constructed in this way is $2^{\aleph_0}$.
\end{proof}

\begin{remark}
Naturally, if $\sigma$ is random, then $\sigma^{-1}$ must be random as well since they both have the same fixed field.  However, $\sigma \circ \sigma^{-1}$ is clearly not random, so that the set of random automorphisms is not closed under composition.
\end{remark}
\begin{corollary}
There exists a random field.
\end{corollary}

\begin{proof}
Let $\sigma$ be a random element as constructed in the proof of Proposition \ref{ExRandomElts}.  Observe that, by construction, the fixed field of $\sigma$ is of infinite degree over $\Q$.  Then let $G = \langle \sigma \rangle$ and $F$ be the fixed field of $G$.  Now the absolute Galois group of $F$ will contain $G$, and so it will contain a random element.
\end{proof}

The following result is immediate from examining the technique of proof of Proposition \ref{ExRandomElts}.

\begin{corollary}
There are $2^{\aleph_0}$ distinct random fields in $\overline{\Q}$.
\end{corollary}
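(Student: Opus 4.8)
The plan is to extract a $2^{\aleph_0}$-sized family of random automorphisms from a branching refinement of the construction in Proposition~\ref{ExRandomElts}, arranged so that distinct branches produce random elements with \emph{distinct} fixed fields; passing to the fixed field of each (as in the Corollary just above) then yields $2^{\aleph_0}$ distinct random fields. The matching upper bound is automatic, since the countable field $\overline{\Q}$ has only $2^{\aleph_0}$ subfields.

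First I would set up the branching. Fix a parameter $b \in 2^{\mathbb{N}}$, and run the construction of Proposition~\ref{ExRandomElts} verbatim on the odd ``diagonalization'' stages (where $\sigma$ is forced to move an element of the next infinite-degree c.e.\ field), always choosing witnesses of least code so that the run is deterministic. At the $n$-th even stage, with $\sigma$ already defined on the finitely generated Galois field $L$ built so far, I would do two things in place of the single step of the original proof: using Lemma~\ref{le:notmove}, pick the two least-code elements $x^{(0)}_n, x^{(1)}_n \in \overline{\Q}\setminus L$ to which $\sigma$ extends by fixing them; if $b(n)=0$ extend $\sigma$ to $L(x^{(0)}_n)$ by $\sigma(x^{(0)}_n)=x^{(0)}_n$ and then, by Lemma~\ref{le:intersection}, extend to $L(x^{(0)}_n,x^{(1)}_n)$ with $\sigma(x^{(1)}_n)\ne x^{(1)}_n$; if $b(n)=1$, do the same with the roles of $x^{(0)}_n$ and $x^{(1)}_n$ interchanged. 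Then replace $L$ by the Galois closure of $L(x^{(0)}_n,x^{(1)}_n)$ over $\Q$ and extend $\sigma$ to it as before, finally extending $\sigma=\sigma_b$ to all of $\overline{\Q}$.

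Next I would verify that each $\sigma_b$ is random and that $b \mapsto F_b$ is injective, where $F_b$ denotes the fixed field of $\sigma_b$ (equivalently, of $\langle\sigma_b\rangle$). The even stages still place infinitely many new elements strictly outside the current $L$ into the fixed field (namely $x^{(b(n))}_n$), so $F_b$ is of infinite degree; the odd stages are untouched, so $\sigma_b$ still moves an element of every infinite-degree c.e.\ field and hence fixes no such field. Thus $\sigma_b$ is random and, as its absolute Galois group contains a random element, $F_b$ is a random field in $\overline{\Q}$. For injectivity, suppose $b \ne b'$ and let $n$ be least with $b(n)\ne b'(n)$, say $b(n)=0$; the two runs agree on every stage up to the point where bit $n$ is first consulted (the odd stages do not depend on the parameter, and the earlier even stages used only $b{\upharpoonright}n=b'{\upharpoonright}n$), so $x^{(0)}_n$ is literally the same element in both. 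But $\sigma_b(x^{(0)}_n)=x^{(0)}_n$ is a permanent commitment while $\sigma_{b'}(x^{(0)}_n)\ne x^{(0)}_n$ is a permanent commitment, so $x^{(0)}_n \in F_b \setminus F_{b'}$ and in particular $F_b \ne F_{b'}$. Hence there are $2^{\aleph_0}$ distinct random fields.

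The work here is bookkeeping rather than a deep obstacle: one must check that the two extra commitments at each even stage (fixing one fresh element, moving another) are mutually consistent and consistent with the finitely many prior commitments, which is exactly what Lemmas~\ref{le:notmove} and~\ref{le:intersection} supply since $L$ is always finitely generated with infinitely many fresh candidates outside it; and that moving an element at an even stage cannot break a requirement, which is clear because the ``fixes an infinite extension'' requirement is witnessed by the fixed elements $x^{(b(n))}_n$ and the ``moves an element of every c.e.\ field'' requirement lives entirely on the odd stages. The one genuinely new point beyond Proposition~\ref{ExRandomElts} is engineering the even stage so that the branch $b$ is \emph{recoverable} from $F_b$ (through whether $x^{(0)}_n \in F_b$), which is what forces the fixed fields to be pairwise distinct.
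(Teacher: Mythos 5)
Your proof follows the same route the paper intends: the paper's own justification is simply to re-examine the construction of Proposition~\ref{ExRandomElts} and observe that the freedom available at each stage yields $2^{\aleph_0}$ outcomes. What you add --- and what the paper elides --- is the passage from $2^{\aleph_0}$ random \emph{automorphisms} to $2^{\aleph_0}$ random \emph{fields}: distinct automorphisms can share a fixed field, so one really does need your recoverability device, whereby the bit $b(n)$ can be read off from whether $x^{(0)}_n$ lies in $F_b$. That part of your argument is correct and is the right way to make the corollary rigorous.

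One step needs a touch-up. When $b(n)=0$ you fix $x^{(0)}_n$ and then invoke Lemma~\ref{le:intersection} to move $x^{(1)}_n$; that lemma requires $x^{(1)}_n \notin L(x^{(0)}_n)$, and choosing the two least-code elements that are each \emph{individually} fixable does not guarantee this (over $L=\Q$, for instance, $\sqrt{2}$ and $1+\sqrt{2}$ are both fixable, yet fixing either forces fixing the other, so neither order of your two commitments is realizable). The repair is immediate from the proof of Lemma~\ref{le:notmove}: choose $x^{(0)}_n$ and $x^{(1)}_n$ of distinct prime degrees, each coprime to $[L:\Q]$ and to one another; then each remains of full degree over $L$ with the other adjoined, so either one can be fixed while the other is moved, in both orders, and your symmetric branching goes through. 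With that adjustment the argument is complete.
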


\begin{corollary} Let $F_1, \dots, F_k$ be random fields.  Then
\begin{enumerate}
\item If $J$ is an infinite extension of $\Q$ and $J \subseteq F_1$, then $J$ is random.
    \item If $F=\bigcap\limits_{i=1}^k F_i$ is an infinite extension of $\Q$, then $F$ is random.
\end{enumerate}
\end{corollary}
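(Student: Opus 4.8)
The plan is to reduce both parts to the intrinsic nature of a single random automorphism, using that, by definition, whether $\sigma$ is random is a condition on $\sigma$ alone (a statement about $\mu$-tests) and does not depend on which absolute Galois group we regard $\sigma$ as belonging to.

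For part (1): since $F_1$ is random, $\Gal(\overline{\Q}/F_1)$ contains a random element $\sigma$. As $J\subseteq F_1$, every automorphism of $\overline{\Q}$ fixing $F_1$ pointwise also fixes $J$ pointwise, so $\Gal(\overline{\Q}/F_1)\subseteq\Gal(\overline{\Q}/J)$ and hence $\sigma\in\Gal(\overline{\Q}/J)$. Since $\sigma$ is (still) random and $J$ is an infinite extension of $\Q$ by hypothesis, $J$ is random by the definition of a random field; no computation is involved. For part (2) write $M=\bigcap_{i=1}^{k}F_i$. Exactly as above, $M\subseteq F_1$ gives $\Gal(\overline{\Q}/F_1)\subseteq\Gal(\overline{\Q}/M)$, so the random element $\sigma$ provided by randomness of $F_1$ lies in $\Gal(\overline{\Q}/M)$, and therefore $\Gal(\overline{\Q}/M)$ contains a random element. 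By the definition of a random field it then remains only to prove that $M$ is an infinite extension of $\Q$; by the Fundamental Theorem of infinite Galois theory this is the same as showing that $\Gal(\overline{\Q}/M)$ has infinite index in $\Gal(\overline{\Q}/\Q)$, equivalently is not an open subgroup. Note that, by the Galois correspondence, $\Gal(\overline{\Q}/M)$ is precisely the closed subgroup of $\Gal(\overline{\Q}/\Q)$ topologically generated by $\Gal(\overline{\Q}/F_1),\dots,\Gal(\overline{\Q}/F_k)$.

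The hard step is this last point, ruling out $[M:\Q]<\infty$. The properties of a random field $F_i$ that are on the table --- infinite over $\Q$, no infinite computably enumerable subfield, and $\Gal(\overline{\Q}/F_i)$ containing some random element --- do not by themselves forbid the composite group above from being open, so I expect genuine work here (or the use of a property of random fields beyond those already recorded). The clean case, which I would treat first, is when each $F_i$ is the fixed field $K^{\sigma_i}$ of a single random automorphism $\sigma_i$ --- the form in which random fields arise in Proposition \ref{ExRandomElts} --- for then $\Gal(\overline{\Q}/M)=\overline{\langle\sigma_1,\dots,\sigma_k\rangle}$ is topologically generated by the $k$ elements $\sigma_1,\dots,\sigma_k$, while the absolute Galois group of any number field is not topologically finitely generated; hence $[M:\Q]<\infty$ is impossible, and part (1) completes the proof. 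For general random $F_i$ I would try to reduce to this case --- for instance by showing that each $F_i$ contains the fixed field of a random automorphism and that these can be chosen compatibly --- and it is this reduction, not any group- or measure-theoretic estimate, that I would budget the most effort for.
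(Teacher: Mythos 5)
Your part (1) is correct and is essentially the intended argument: randomness of $\sigma$ is a property of $\sigma$ as an element of $\Gal(\overline{\Q}/\Q)$, so it persists when $\sigma$ is viewed inside the larger group $\Gal(\overline{\Q}/J)\supseteq\Gal(\overline{\Q}/F_1)$. (The paper instead phrases item 1 through the ``no infinite c.e.\ subfield'' characterization --- $J\subseteq F_1$ cannot contain an infinite c.e.\ subfield because $F_1$ would then contain it --- but your direct route through the definition is at least as clean.)

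For part (2) you have put your finger on exactly the right issue, and you should not expect to close it: the paper's own proof says only that item 2 ``follows immediately from item 1,'' which silently assumes that $M=\bigcap_i F_i$ is an \emph{infinite} extension of $\Q$, and that assumption fails in general. The construction of Proposition \ref{ExRandomElts} already produces a counterexample: at the even stages one adjoins elements $x_2,x_4,x_6,\dots$ fixed by the random $\sigma$, chosen via Lemma \ref{le:notmove} so that each $\Q(x_{2n+2})$ is linearly disjoint from $L_{2n+1}\supseteq\Q(x_2,\dots,x_{2n})$ over $\Q$. Then $F_1=\Q(x_2,x_6,x_{10},\dots)$ and $F_2=\Q(x_4,x_8,x_{12},\dots)$ are infinite extensions fixed pointwise by $\sigma$, hence both random by the definition, yet they are linearly disjoint over $\Q$, so $F_1\cap F_2=\Q$, which is not random. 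Your observation that the statement does hold when each $F_i$ is the \emph{full} fixed field $K^{\sigma_i}$ of a random automorphism is correct ($\Gal(\overline{\Q}/M)$ is then topologically generated by $\sigma_1,\dots,\sigma_k$, while the absolute Galois group of a number field is not topologically finitely generated, e.g.\ because it surjects onto $\Gal(K(\sqrt{a}:a\in K^\times)/K)\cong(\Z/2\Z)^{\aleph_0}$), but the reduction of the general case to this one that you budget effort for cannot exist, since a random field can be a proper (indeed, linearly disjoint from another such) infinite subfield of $K^{\sigma}$. So the honest conclusion is that your proposal proves item 1, proves item 2 under the added hypothesis $[M:\Q]=\infty$ (or when the $F_i$ are fixed fields of random automorphisms), and that item 2 as stated is not salvageable without such a hypothesis.
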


\begin{proof} Item 1 is immediate from the definition of a random field, since the absolute Galois group of $J$ contains that of $F_1$.  Then Item 2 follows from Item 1.
\end{proof}

\subsubsection{Does there exist a ``super random" group?}
One natural question one could ask is whether there exists a ``super random" absolute Galois group, that is the group where every non-trivial element is random.  The fixed field of such a group would have no extension containing an infinite degree c.e.\ subfield.  Existence of such a ``super random" field corresponds to the existence of a subset $X \subset \N$ such that for any co-infinite superset $S\supset X$ we have that $S$ contains no infinite c.e.\ subset.  Unfortunately, as is shown by the argument in Lemma \ref{star} such a set $X$ does not exist.  We summarize this finding in the following proposition.

\begin{proposition}
Every random absolute Galois group contains a non-random non-trivial element. Equivalently, every random field is contained in a non-random field not equal to the algebraic closure.
\end{proposition}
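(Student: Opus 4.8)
The plan is to exhibit a single explicit non-trivial automorphism of $\overline{\Q}$ that fixes the given random field $K$ and simultaneously fixes the infinite-degree computably enumerable field $\Q^{(2)} := \Q(\sqrt p : p \text{ prime})$. Any such automorphism $\tau$ is automatically non-random: it lies in $\Gal(\overline{\Q}/\Q^{(2)})$, which is closed, computably enumerable --- because $\Q^{(2)}$ is, being the increasing union of the uniformly computable number fields $\Q(\sqrt{p_1},\dots,\sqrt{p_n})$ produced by Corollary~\ref{cor:sequence} --- and of Haar measure $0$, since $\Q^{(2)}$ has infinite degree (the remark following Lemma~\ref{le:notGalois}); thus the constant sequence $S_i := \Gal(\overline{\Q}/\Q^{(2)})$ is a $\mu$-test with $\tau \in \bigcap_i S_i$. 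Such a $\tau$ exists exactly when $K \cdot \Q^{(2)} \neq \overline{\Q}$, because then $\Gal(\overline{\Q}/K) \cap \Gal(\overline{\Q}/\Q^{(2)}) = \Gal(\overline{\Q}/K\Q^{(2)})$ is non-trivial. So the whole statement reduces to the purely field-theoretic claim: \emph{if $K$ is random then $K\Q^{(2)} \neq \overline{\Q}$.}

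I would prove this claim by contradiction. If $K\Q^{(2)} = \overline{\Q}$, then restriction to $\Q^{(2)}$ gives a homomorphism $\Gal(\overline{\Q}/K) \to \Gal(\Q^{(2)}/\Q)$ whose kernel is $\Gal(\overline{\Q}/K\Q^{(2)}) = \{\id\}$, hence an injection. Since $\Q^{(2)}$ is a compositum of quadratic extensions, $\Gal(\Q^{(2)}/\Q)$ has exponent $2$, so $\Gal(\overline{\Q}/K)$ is an elementary abelian $2$-group. It is moreover infinite: a random $\sigma \in \Gal(\overline{\Q}/K)$ must have infinite order, for if $\sigma$ had finite order $n$ then $\mathrm{Fix}(\sigma)$ would have index $n$ in $\overline{\Q}$, forcing (Artin--Schreier) $n \in \{1,2\}$, and then either $\sigma = \id$ or $\mathrm{Fix}(\sigma)$ is real closed and so contains $\Q^{(2)}$ --- in either case $\sigma$ is not random, a contradiction. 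An infinite elementary abelian $2$-group has more than two elements, hence contains a Klein four subgroup $V$, whose fixed field $N_0$ satisfies $[\overline{\Q}:N_0] = |V| = 4$; but then $\overline{\Q}$ is a finite extension of $N_0$ of degree $\neq 2$, contradicting the Artin--Schreier theorem. This establishes $K\Q^{(2)} \neq \overline{\Q}$, and with it the ``super random'' phrasing of the proposition. (The combinatorial shadow of this argument --- that a hypothetical super random field would produce a subset of $\N$ with property~(*) --- is exactly what Lemma~\ref{star} rules out.)

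Finally I would read off the two formulations. With $\tau \in \Gal(\overline{\Q}/K\Q^{(2)})$ non-trivial as above, $\tau$ is a non-trivial non-random element of $\Gal(\overline{\Q}/K)$ --- the group formulation. For the field formulation put $N := \mathrm{Fix}(\tau)$: then $K \subseteq N$, $N \neq \overline{\Q}$ (since $\tau \neq \id$), and $N \supseteq \Q^{(2)}$, so $\Gal(\overline{\Q}/N) \subseteq \Gal(\overline{\Q}/\Q^{(2)})$ sits inside a computably enumerable group of measure $0$; hence $\Gal(\overline{\Q}/N)$ contains no random element and $N$ is not a random field, while $K \subseteq N \subsetneq \overline{\Q}$. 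The main obstacle, I expect, is the step $K\Q^{(2)} \neq \overline{\Q}$: this is not a computability fact at all but a structural constraint on absolute Galois groups, and the only route I see uses the Artin--Schreier theorem (that a field with a finite, non-trivial absolute Galois group has that group of order $2$). Everything else --- the effective construction of $\Q^{(2)}$, the passage from computable enumerability of a field to that of its absolute Galois group, the vanishing of Haar measure on infinite extensions, and the characterization of random elements by their fixed fields --- is already in place.
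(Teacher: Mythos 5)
Your proof is correct, but it takes a genuinely different route from the paper's. The paper argues combinatorially: a ``super random'' field would yield a subset of $\N$ with property (*) --- an infinite co-infinite set none of whose co-infinite supersets contains an infinite c.e.\ subset --- and Lemma \ref{star} (via the tension between immunity of the set and cohesiveness of its complement, Lemma \ref{le:cohesive}) rules such a set out. You instead work entirely inside Galois theory: you fix the concrete computable infinite-degree field $\Q(\sqrt p : p \text{ prime})$, reduce the proposition to the claim that its compositum with a random $K$ cannot be all of $\overline{\Q}$, and prove that claim with Artin--Schreier, since otherwise $\Gal(\overline{\Q}/K)$ would embed into an exponent-$2$ group while containing a random (hence infinite-order) element. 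What your approach buys is a fully explicit, self-contained field-theoretic argument that actually handles the case the paper's set-theoretic analogy glosses over (namely, what happens when the compositum of $K$ with a c.e.\ field exhausts $\overline{\Q}$ --- the field-theoretic analogue of ``$\overline{W}\cap\overline{X}$ finite'' has no direct translation, and the compositum of two fields is not the union of two sets); the cost is reliance on Artin--Schreier, an external theorem the paper never invokes. One small wrinkle: once you know $\Gal(\overline{\Q}/K)$ has exponent $2$ and contains an element of infinite order, you already have your contradiction; the detour through ``the group is infinite, hence contains a Klein four-subgroup whose fixed field has index $4$'' is valid but redundant, and the phrase ``it is moreover infinite'' sits awkwardly, since an exponent-$2$ group cannot contain the infinite-order element you use to infer infinitude. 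Neither issue is a gap.
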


\section{The Set of Indices for Random Fields}\label{SecIdx}

The usual representation of a field in effective structure theory regards the field as having universe $\mathbb{N}$, and then identifies the field with its atomic diagram; that is, with the set of polynomial equations and inequations of elements true in that field.  In particular, if the field is computable, or even computably enumerable, it may be identified by the index of a Turing machine whose range is that atomic diagram.

A common way to calibrate the complexity of a class of structures is to determine the Turing degree of the set of indices of its members \cite{idxsets,ecl1,ecl2,idxhsr}.  Suppose now that we fix an oracle of Turing degree $\mathbf{d}$.  In this section, we attempt to calculate the Turing degree of the set of $\mathbf{d}$-indices for $\mathbf{d}$-computable random fields.  Of course, if $\mathbf{d} = \mathbf{0}$, there are none.  On the other hand, for other $\mathbf{d}$, the problem becomes nontrivial.


\begin{proposition}
Let $X \subseteq \mathbb{N}$.  The set of indices for $X$-computably enumerable subfields of $\overline{\Q}$ with infinite degree is $m$-complete $\Pi^0_2(X)$.  Moreover, the same result holds when the language is expanded to include constant symbols for generators for the field over $\Q$.
\end{proposition}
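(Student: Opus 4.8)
The plan is to establish the two halves of the claim --- membership in $\Pi^0_2(X)$ and $\Pi^0_2(X)$-$m$-hardness --- separately, and then to check that adjoining constants for generators disturbs neither argument. Throughout, I write $W_e^X$ for the $X$-c.e.\ set with index $e$, viewed via the fixed bijection $\sigma$ as a subset of $\overline{\Q}$, and $W_{e,s}^X$ for its stage-$s$ approximation (computable with oracle $X$, uniformly in $e,s$).

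For membership, observe that $W_e^X$ is an infinite-degree subfield of $\overline{\Q}$ exactly when it satisfies (i) the field axioms and (ii) ``for every $n$ there is an element of degree $>n$ over $\Q$''. Each closure axiom, e.g.\ $\forall a\,\forall b\,\bigl(a\in W_e^X \wedge b\in W_e^X \to a+b\in W_e^X\bigr)$, is $\Pi^0_2(X)$: its matrix is a Boolean combination of the $\Sigma^0_1(X)$ statements ``$\cdot\in W_e^X$'' (the operations $+,\times$ and inversion being computable in our fixed copy of $\overline{\Q}$), hence $\Delta^0_2(X)$, and the universal quantifiers keep us inside $\Pi^0_2(X)$; the axioms ``$0,1\in W_e^X$'' are merely $\Sigma^0_1(X)$. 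For (ii) I would first record the algebraic fact that an algebraic extension $K/\Q$ is infinite iff it contains elements of arbitrarily large degree over $\Q$ --- the nontrivial direction holding because, by the primitive element theorem, every finitely generated subextension is simple and so has degree at most the largest degree of an element it contains. Since $\Q$ has a splitting algorithm (Proposition~\ref{prop:computable}), $[\Q(\alpha):\Q]$ is computable from $\alpha$ (as in the proof of Lemma~\ref{FindConjugates}); hence ``$\exists\,\alpha\in W_e^X$ with $[\Q(\alpha):\Q]>n$'' is $\Sigma^0_1(X)$, and prefixing $\forall n$ makes (ii) a $\Pi^0_2(X)$ condition. A finite conjunction of $\Pi^0_2(X)$ conditions is $\Pi^0_2(X)$.

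For hardness I would reduce from $\mathrm{Inf}^X=\{e:W_e^X\text{ is infinite}\}$, which is $m$-complete $\Pi^0_2(X)$ (the standard proof relativizes and yields a computable reduction, via $s$-$m$-$n$). Fix a computable enumeration $p_0<p_1<\cdots$ of the rational primes and, for each $k$, a root $\sqrt{p_k}\in\overline{\Q}$ of $T^2-p_k$; the map $k\mapsto\sqrt{p_k}$ is computable. Given $e$, let $f(e)$ be an index (obtained uniformly) for the following $X$-computable enumeration: at stage $s$, compute $W_{e,s}^X$ with the oracle, use Corollary~\ref{cor:sequence} (applied to the subsequence indexed by $W_{e,s}^X$) to produce the finite computable field $\Q(\{\sqrt{p_k}:k\in W_{e,s}^X\})$, and enumerate its elements of code $\le s$. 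Since $(W_{e,s}^X)_s$ is nondecreasing, the set enumerated is exactly $K_e:=\Q(\{\sqrt{p_k}:k\in W_e^X\})$, a subfield of $\overline{\Q}$; and because $[\Q(\sqrt{p_{k_1}},\dots,\sqrt{p_{k_m}}):\Q]=2^m$ for distinct primes (unique factorization), $K_e$ has infinite degree iff $W_e^X$ is infinite. Thus $f$ is a computable $m$-reduction of $\mathrm{Inf}^X$ to the index set in question.

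For the version with constant symbols naming a generating set, the upper bound gains only the clauses ``each constant lies in the field'' and ``the constants generate it'', the latter being $\forall\alpha\,\bigl(\alpha\in K \to \exists n\,(\alpha\in\Q(c_0,\dots,c_n))\bigr)$, which is $\Pi^0_2(X)$ because $\Q(c_0,\dots,c_n)$ is computable in $X$ uniformly in $n$ and the (finite) data for the constants; so the bound is unchanged. For hardness I would keep $K_e$ and interpret the $i$-th constant as $\sqrt{p_{e_i}}$, where $e_i$ is the $(i{+}1)$-st element of $W_e^X$ in order of enumeration --- a total $X$-computable assignment precisely when $W_e^X$ is infinite, in which case $\{c_i\}_i$ generates $K_e$ --- so $f(e)$ names a structure of the required kind iff $e\in\mathrm{Inf}^X$. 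I expect the membership direction to be the delicate part, specifically pinning down that ``infinite degree'' sits exactly at the $\Pi^0_2$ level (via the primitive-element observation together with computability of degrees over $\Q$) rather than higher up; once the coprime--squarefree coding by the $\sqrt{p_k}$ is in place, the hardness direction is routine.
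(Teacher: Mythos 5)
Your proof is correct and follows the same structure as the paper's: the upper bound rests on the same characterization of infinite degree via elements of arbitrarily large degree over $\Q$ (a $\forall\exists$ condition over $\Sigma^0_1(X)$ data), and hardness is obtained by the same reduction from the $m$-complete $\Pi^0_2(X)$ set of indices of infinite $X$-c.e.\ sets, coded into a field whose degree is infinite exactly when $W_e^X$ is infinite. The only differences are cosmetic: you count quantifiers directly rather than invoking Ash's theorem on computable $\Pi_2$ formulas, you code via $\Q(\sqrt{p_k}:k\in W_e^X)$ where the paper uses the union $\bigcup_{i\in W_e}K_i$ of a nested uniformly computable tower, and you spell out the constants clause that the paper leaves implicit.
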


\begin{proof} We give a uniform proof, but omit the notation of $X$.  To show that the set of indices for infinite degree fields is $\Pi^0_2$, we will write a computable infinitary formula (see Section \ref{SecComputability}) which is true exactly in infinite degree fields.  By a standard result of Ash (see \cite{ashknight}), this will show that the set of indices must be $\Pi^0_2$.  

We first let $I_n$ denote the (decidable \cite{friedjarden}) set of irreducible polynomials over $\Q$ of degree strictly greater than $n$, and notice that the following collection of subfields $F \subset \overline{\Q}$ is exactly the set of infinite algebraic extensions of $\rat$.
\[ \left\{F |\bigwedge\limits_{n \in \mathbb{N}}\hspace{-0.2in}\bigwedge \left(\exists \alpha\in F  \bigvee\limits_{p \in I_n}\hspace{-0.22in}\bigvee p(\alpha)=0\right)\right\}.
\]

Toward completeness, we note that the set of indices for infinite computably enumerable sets is $m$-complete $\Pi^0_2$ (see, for instance, Theorem 4.3.2 of \cite{soare}).  To show that the set of indices for infinite extensions of $\rat$ is $m$-complete $\Pi^0_2$ it suffices to give a computable function $f$ such that $f(e)$ is the index for an infinite extension of $\rat$ if and only if $e$ is the index of an infinite c.e.\ set.

It suffices, then, to give a uniformly computable sequence of fields $(F_e : e \in \mathbb{N})$ such that $F_e$ is an infinite extension of $\rat$ if and only if $W_e$ is infinite.  We let $(K_i : i \in \mathbb{N})$ be a uniformly computable sequence of fields (given by their atomic diagrams) with \[\Q = K_0 \subsetneq K_1\subsetneq  \cdots \subsetneq \overline{\Q},\] where $\overline{\Q} = \bigcup\limits_{i\in \mathbb{N}} K_i$, and $K_{i+1}$ is a finite extension of $K_i$.  We note that for any $i$, the field $K_i$ contains all $K_j$ for $j<i$, so that any union, finite or infinite, of these fields will still be a field.  

Now, to each computably enumerable set $W_e$ we associate the subfield $F_e = \bigcup\limits_{i \in W_e} K_i$.  The result follows, since every infinite c.e.\ set $W_e$ will give $F_e = \overline{\Q}$, and every finite c.e.\ set $W_e$ will give a finite extension $F_e \supseteq \Q$.  Effectively recovering the indices of Turing machines for the fields $F_e$, we have the desired function $f$ on Turing machine indices. 
\end{proof}

\begin{corollary}
A set $X''$ can generate a listing of all $X$-computably enumerable absolute Galois groups of measure zero via their graphs.
\end{corollary}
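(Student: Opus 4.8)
The plan is to feed the relativized enumeration of Lemma \ref{EnumeratingGroups} through the index--set computation of the preceding Proposition, and to observe that the oracle $X''$ is strong enough both to run the first and to decide the output of the second.

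First I would record the measure-theoretic dictionary. By Lemma \ref{le:notGalois} a number field $K$ gives $\mu(\Gal(\overline{\Q}/K)) = \tfrac1{[K:\Q]}>0$, while by the Remark following that lemma an infinite algebraic $K$ gives $\mu(\Gal(\overline{\Q}/K))=0$; hence an absolute Galois group has Haar measure zero precisely when its fixed field has infinite degree over $\Q$. Next, the proof of Lemma \ref{EnumeratingGroups} relativizes verbatim, so $X'$ enumerates the $X$-computably enumerable absolute Galois groups, each presented (per the conventions of the paper) by an $X$-c.e.\ index for the complement of its graph; say $X'$ produces a list of such indices $(n_k)_{k\in\mathbb N}$. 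From an index $n_k$ for the complement of the graph of $\Gal(\overline{\Q}/K)$ one extracts, uniformly and $X$-computably, an $X$-c.e.\ index $g(n_k)$ for the fixed field $K$ itself: using Lemma \ref{FindConjugates} compute the finitely many conjugates $\beta\ne\alpha$ of $\alpha$ over $\Q$, and enumerate $\alpha$ into $K$ once every pair $(\alpha,\beta)$ for those $\beta$ has appeared in the enumeration of the complement of the graph, since $\alpha\in K$ exactly when $\Gal(\overline{\Q}/K)$ fixes $\alpha$.

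Finally, by the preceding Proposition the predicate ``the $X$-c.e.\ set with index $g(n_k)$ is a subfield of $\overline{\Q}$ of infinite degree'' is $\Pi^0_2(X)$ in $n_k$, and $\Pi^0_2(X)=\Pi^0_1(X')$ is decidable by $X''=(X')'$. Thus $X''$ may compute $X'$, run the enumeration $(n_k)_k$, and retain exactly those $n_k$ whose associated field has infinite degree; the retained indices list precisely the $X$-c.e.\ absolute Galois groups of Haar measure zero, via (the complements of) their graphs. The one step needing genuine care is the uniform passage from the complement of a group's graph to a c.e.\ index for its fixed field, which is what allows the index--set bound of the preceding Proposition to be invoked; the remainder is the bookkeeping observation that $X''$ absorbs both the enumeration oracle $X'$ and the $\Pi^0_2(X)$ sieve.
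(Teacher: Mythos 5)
Your proposal is correct, and it rests on the same two pillars as the paper's proof: the $\Pi^0_2(X)$ bound on the index set of infinite-degree $X$-c.e.\ subfields from the preceding proposition (together with the dictionary ``measure zero $\Leftrightarrow$ infinite degree'' from Lemma \ref{le:notGalois} and the remark following it), and a uniform passage between absolute Galois groups and their fixed fields. The route is organized in the opposite direction, though: the paper enumerates the $\Pi^0_2(X)$ set of infinite-degree $X$-c.e.\ fields directly from $X''$ and then converts each field to the graph of its group via Proposition \ref{FieldsAndGaloisTeq}, whereas you first list \emph{all} $X$-c.e.\ groups via the relativized Lemma \ref{EnumeratingGroups}, pass to the fixed fields, and then sieve by degree. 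The net effect is identical, but your version buys a little extra rigor in one place: Proposition \ref{FieldsAndGaloisTeq} as stated is a Turing equivalence of characteristic functions, while the objects in play here are presented only by enumerations of the complements of their graphs, so your explicit enumeration operator (wait until every pair $(\alpha,\beta)$, with $\beta$ a $\Q$-conjugate of $\alpha$ distinct from $\alpha$, has appeared in the complement of the graph, then enumerate $\alpha$ into the field) is exactly the uniformity the argument needs in the c.e.\ setting. Conversely, your route leans on the relativized Lemma \ref{EnumeratingGroups}, which the paper's proof avoids; since your final sieve runs under $X''$ anyway, nothing is lost even if that preliminary enumeration is taken only up to an $X''$-sieve itself.
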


\begin{proof} By Proposition \ref{FieldsAndGaloisTeq}, fields are uniformly Turing equivalent to the graphs of their absolute Galois groups.  The Corollary, then, is equivalent to enumerating the $X$-computably enumerable infinite extension fields of $\rat$.  By the previous proposition, this set is $\Pi^0_2(X)$.  Such a set is certainly enumerable from $X''$.
\end{proof}

\begin{proposition}\label{CheckIsec}
Let $G_c$ be a computably enumerable absolute Galois group.  Let $G$ be an arbitrary absolute Galois group. Then $\Gamma(G)''$ can determine whether $G_c \cap G$ is trivial.  
\end{proposition}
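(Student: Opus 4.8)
The plan is to recast ``$G_c\cap G=\{\mathrm{id}\}$'' as the nonexistence of an infinite branch through a suitable finitely–branching tree, and then read off the arithmetical complexity. Fix a computable sequence $\{\alpha_i\}$ with $\Q(\alpha_i)\subseteq\Q(\alpha_{i+1})$ and $\bigcup_i\Q(\alpha_i)=\overline{\Q}$, so that, by Corollary \ref{cor:sequence}, the tower $(\Q(\alpha_i))_i$ is uniformly computable. Let $T_0$ be the tree whose nodes of length $m$ are the tuples $(\gamma_1,\dots,\gamma_m)$ such that $\alpha_j\mapsto\gamma_j$ $(j\le m)$ defines an embedding of $\Q(\alpha_m)$ into $\overline{\Q}$ fixing $\Q$, and such that $\gamma_j\in G(\alpha_j)\cap G_c(\alpha_j)$ for every $j\le m$. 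First I would prove the following: $G_c\cap G\neq\{\mathrm{id}\}$ if and only if for some $n$ and some conjugate $\beta\neq\alpha_n$ of $\alpha_n$ over $\Q$, the subtree $\{(\gamma_1,\dots,\gamma_m)\in T_0:\ m\ge n,\ \gamma_n=\beta\}$ is infinite.

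The forward direction is immediate: if $\sigma\in G_c\cap G$ is nontrivial it moves some $\alpha_n$, and $(\sigma(\alpha_i))_i$ is a branch of $T_0$ through $\beta:=\sigma(\alpha_n)$, since $\sigma(\alpha_j)\in(G_c\cap G)(\alpha_j)\subseteq G(\alpha_j)\cap G_c(\alpha_j)$. For the converse, each $\alpha_j$ has only finitely many $\Q$-conjugates, so $T_0$ is finitely branching and the displayed subtree has finitely many minimal nodes; by K\"onig's lemma an infinite such subtree has an infinite branch $(\gamma_i)_i$, which is a path through an automorphism tree of $\Q$ in the sense of Proposition \ref{prop:path} and hence determines $\sigma\in\Gal(\overline{\Q}/\Q)$ with $\sigma(\alpha_i)=\gamma_i$; as $\gamma_n=\beta\neq\alpha_n$ we get $\sigma\neq\mathrm{id}$. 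The step I expect to need the most care is checking that this $\sigma$ actually lies in $G_c\cap G$, not merely in each of $G$ and $G_c$. The point is that $\sigma(\alpha_i)\in G(\alpha_i)$ for \emph{every} $i$ already forces $\sigma(\alpha)\in G(\alpha)$ for every $\alpha\in\overline{\Q}$: given $\alpha\in\Q(\alpha_i)$, pick $\tau\in G$ with $\tau(\alpha_i)=\sigma(\alpha_i)$; then $\tau$ and $\sigma$ agree on the generator $\alpha_i$, hence on all of $\Q(\alpha_i)$, so $\sigma(\alpha)=\tau(\alpha)\in G(\alpha)$. Since $G$ is closed, Lemma \ref{parttowhole1} gives $\sigma\in G$, and the symmetric argument gives $\sigma\in G_c$. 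This is exactly why the bare tree $T_0$ suffices and no auxiliary recording of extendibility into $G_c\cap G$ is needed: a finite partial embedding can extend into $G$ and into $G_c$ without extending into $G_c\cap G$, but a whole branch cannot.

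With the characterization in hand the complexity count is routine. To decide whether a tuple $t=(\gamma_1,\dots,\gamma_m)$, with each $\gamma_j$ drawn from the computable finite set of $\Q$-conjugates of $\alpha_j$, is a node of $T_0$, one checks: that $\alpha_j\mapsto\gamma_j$ is a coherent embedding, which is decidable from the sequence $\{\alpha_i\}$ alone; that $(\alpha_j,\gamma_j)\in\Gamma(G)$ for $j\le m$, which is decidable from the oracle $\Gamma(G)$; and that $(\alpha_j,\gamma_j)\in\Gamma(G_c)$ for $j\le m$, which is a $\Pi^0_1$ condition because $G_c$ being c.e.\ means $\Gamma(G_c)^c$ is c.e. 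Thus ``$t\in T_0$'' is $\Pi^0_1$ relative to $\Gamma(G)$. Since there are only finitely many candidate tuples of each length, ``there is a node of length $m$ with $\gamma_n=\beta$'' is a finite disjunction of $\Pi^0_1(\Gamma(G))$ statements, hence again $\Pi^0_1(\Gamma(G))$; conjoining over all $m\ge n$ (and over the finitely many $\beta$) leaves it $\Pi^0_1(\Gamma(G))$; and prefixing the unbounded quantifier ``$\exists n$'' shows that ``$G_c\cap G\neq\{\mathrm{id}\}$'' is $\Sigma^0_2(\Gamma(G))$. Every $\Sigma^0_2(\Gamma(G))$ set, and its complement, is computable from $\Gamma(G)''$, so $\Gamma(G)''$ decides whether $G_c\cap G$ is trivial.

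The only genuine obstacle, then, is the equivalence in the first paragraph — specifically, the verification via Lemma \ref{parttowhole1} that an infinite branch of $T_0$ lands inside $G_c\cap G$; once that is in place, everything reduces to K\"onig's lemma plus bookkeeping in the arithmetical hierarchy, both of which behave well here precisely because each coordinate $\gamma_i$ ranges over a finite set.
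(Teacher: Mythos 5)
Your proof is correct, and it takes a somewhat different route from the paper's. The paper disposes of the proposition in two sentences by citing Proposition \ref{intersectinggroups}: nontriviality of $G_c\cap G$ is equivalent to the existence of a coherent sequence $\{\gamma_i\}$ of common images of the $\alpha_i$, and the paper then asserts that checking this costs two jumps of $\Gamma(G)$. The weakness of that packaging is that the conditions in Proposition \ref{intersectinggroups} refer to the sets $T_{\bar\gamma,i}=\{\tau\in G_1\cap G_2:\tau(\alpha_{i-1})=\gamma_{i-1}\}$, i.e.\ they quantify over the very intersection whose triviality is being decided, so the arithmetical complexity of the characterization is not transparent and the paper's quantifier count is left implicit. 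You replace this with a finitely branching tree whose node relation is visibly $\Pi^0_1(\Gamma(G))$ (decidable embedding coherence, $\Gamma(G)$-decidable membership, and a co-c.e.\ condition from $\Gamma(G_c)$), use K\"onig's lemma to convert ``infinite subtree'' into ``infinite branch,'' and then use Lemma \ref{parttowhole1} twice --- once for each closed group --- to land the resulting automorphism in $G_c\cap G$. That last step is exactly the point you flag as delicate, and your argument for it (the value on the generator $\alpha_i$ determines the embedding on all of $\Q(\alpha_i)$, so pointwise membership of each $\gamma_i$ in $G(\alpha_i)$ propagates to all of $\overline{\Q}$ and then closedness finishes) is the same mechanism that powers the converse direction of Proposition \ref{intersectinggroups}, so the underlying mathematics coincides; what your version buys is a self-contained proof with an explicit $\Sigma^0_2(\Gamma(G))$ normal form, which makes the ``computable from $\Gamma(G)''$'' conclusion a routine observation rather than an assertion.
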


\begin{proof} By Corollary \ref{intersectinggroups}, it suffices to check whether, for each $i$ there is some $j>i$ for which there exists a nontrivial element of $\Gamma(G_c)(\alpha_j) \cap \Gamma(G)(\alpha_j)$.  This can be done in the second jump of $\Gamma(G)$, since $G_c$ is computably enumerable.
\end{proof}

\begin{corollary}
Let $G$ be as above.  Then $\Gamma(G)'''$ can determine whether $G$ is random.
\end{corollary}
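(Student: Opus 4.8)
The plan is to replace the (prima facie $\Pi^1_1$) condition ``$G$ contains a random element'' --- a quantifier over the paths of the automorphism tree of $G$ --- by an arithmetical condition on $\Gamma(G)$ of bounded quantifier complexity, and then count jumps. The key step is the equivalence
\[
G \text{ is random}\quad\Longleftrightarrow\quad \text{the fixed field }F\text{ of }G\text{ contains no c.e.\ subfield of infinite degree,}
\]
equivalently, $G$ is contained in no computably enumerable absolute Galois group of measure zero. One direction is immediate from the characterization of random elements behind Corollary \ref{nonrandombyfixedfield}: if $F\supseteq L$ with $L$ an infinite c.e.\ field, then $\Gal(\overline{\Q}/F)\subseteq\Gal(\overline{\Q}/L)$, so every nontrivial element of $G$ fixes the infinite c.e.\ field $L$ and is therefore non-random, whence $G$ is not random. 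For the converse I would argue contrapositively: assuming $F$ has no infinite c.e.\ subfield, run the construction of Proposition \ref{ExRandomElts} relativized so as to fix $F$ pointwise throughout --- at even stages adjoining (via Lemma \ref{le:notmove}, applied over $F$) a fresh element that $\sigma$ fixes, so the fixed field of $\sigma$ keeps growing, and at odd stages defeating the next infinite c.e.\ field $L_m$ by extending $\sigma$ to move one of its elements (Lemma \ref{le:intersection}). What makes this go through is that every infinite c.e.\ field $L_m$ satisfies $L_m\not\subseteq F$ (else it would be an infinite c.e.\ subfield of $F$); a little bookkeeping in the choice of the finitely many elements adjoined before $L_m$ is treated guarantees that $L_m$ is not yet contained in the finite extension of $F$ built so far, so the odd stage can always be carried out. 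The resulting $\sigma$ lies in $G$ and is random, so $G$ is random.

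Granting the equivalence, the complexity count is routine. ``$G$ is not random'' asserts: there is an index $i$ such that $W_i$ is the complement of the graph of an absolute Galois group, that group has measure zero, and its fixed field $L_i$ is contained in $F$. By Lemma \ref{EnumeratingGroups} together with the proposition above identifying the index set of infinite-degree c.e.\ fields as $m$-complete $\Pi^0_2$, the clause ``$W_i$ codes an infinite-degree c.e.\ absolute Galois group'' is $\Pi^0_2$ (at worst $\Sigma^0_3$ after sieving). The clause ``$L_i\subseteq F$'' is $\Pi^0_1$ relative to $\Gamma(G)$: $L_i$ is c.e.\ and, by Proposition \ref{FieldsAndGaloisTeq}, $F$ is computable from $\Gamma(G)$, so ``$L_i\subseteq F$'' reads ``for all $x$ and all $s$, if $x$ has been enumerated into $L_i$ by stage $s$ then $x\in F$''. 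Hence ``$G$ is not random'' is $\Sigma^0_3$ relative to $\Gamma(G)$, so ``$G$ is random'' is $\Pi^0_3$ relative to $\Gamma(G)$, and the third jump $\Gamma(G)'''$ decides it. (Equivalently, one may observe directly that ``$F$ has no infinite c.e.\ subfield'' is $\Pi^0_3$ relative to $\Gamma(G)$.)

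I expect the forward direction of the displayed equivalence to be the only real obstacle: one must verify that the relativized construction of Proposition \ref{ExRandomElts} never gets stuck. The sole way an odd stage for $L_m$ could fail is for $\sigma$ to have already been committed on a field containing all of $L_m$ and fixing it pointwise; excluding this rests on $L_m\not\subseteq F$ together with the fact that Lemmas \ref{le:intersection} and \ref{le:notmove} each leave infinitely many admissible choices, so one can always adjoin the finitely many earlier elements without having them generate $L_m$ over $F$. Once this bookkeeping is in place, the $\Sigma^0_3$ estimate, and hence the conclusion, follow mechanically.
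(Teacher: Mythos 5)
Your quantifier count is fine, and the forward implication (an infinite-degree c.e.\ subfield of the fixed field $F$ of $G$ kills randomness) is correct, but the whole argument stands on the converse of your displayed equivalence, and that converse is a genuine gap: the paper proves only the forward direction (a field containing an infinite-degree c.e.\ subfield is not random), and its Corollary \ref{nonrandombyfixedfield} characterizes non-randomness of $G$ by a condition on the fixed field of \emph{every non-trivial element} of $G$, not by a condition on the fixed field of $G$ itself, which is the much stronger statement you need. Your relativization of Proposition \ref{ExRandomElts} breaks at exactly the point you flag. In the original construction the partial automorphism at stage $s$ lives on a field of \emph{finite degree over $\Q$}, so every infinite-degree c.e.\ field automatically has an element outside the current domain and Lemma \ref{le:intersection} applies. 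In your version the current domain is a finite extension $M$ of $F$, hence typically of infinite degree over $\Q$, and an infinite-degree c.e.\ field $L_m$ can lie entirely inside $M$ --- indeed entirely inside the fixed field of the partial automorphism already built --- even though $L_m\not\subseteq F$: a finite extension of a field with no infinite c.e.\ subfield can perfectly well contain one, since $L_m\cap F$ has finite index in $L_m$ but need not itself be c.e. Your proposed bookkeeping must therefore, at each stage, make one choice subject to infinitely many future constraints (one for each $L_m$ not yet treated), each of which can in principle exclude infinitely many of the admissible choices supplied by Lemmas \ref{le:intersection} and \ref{le:notmove}; no argument is given that a consistent choice always exists, and none is obvious. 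Relatedly, the set of non-random elements of $G$ is the countable union of the closed subgroups $\Gal(\overline{\Q}/FL)$ over infinite-degree c.e.\ fields $L$; some of these can have finite index in $G$ even when $L\not\subseteq F$, and a profinite group can be covered by countably many proper open subgroups, so even the truth of the equivalence is in doubt, not merely your proof of it.

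The paper's own proof sidesteps this entirely: it takes the $\emptyset'$-enumeration of c.e.\ absolute Galois groups (Lemma \ref{EnumeratingGroups}) and, for each enumerated group $G_c$, decides with $\Gamma(G)''$ whether $G_c\cap G$ is trivial (Proposition \ref{CheckIsec}, which rests on Proposition \ref{intersectinggroups}); one further quantifier over the enumeration then gives the $\Gamma(G)'''$ bound, with no need to characterize randomness of $G$ by a property of $F$ alone. To repair your argument you would either have to prove the converse implication --- a substantial new theorem --- or replace your test ``some infinite-degree c.e.\ field is contained in $F$,'' which is a containment of $G$ in a c.e.\ group, by the paper's test on non-trivial intersections with c.e.\ groups, which is what \ref{CheckIsec} makes arithmetical in $\Gamma(G)$; your complexity bookkeeping would then go through essentially unchanged.
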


\begin{proof}
By Lemma \ref{EnumeratingGroups}, we can enumerate the computably enumerable subgroups of $\Gal(\overline{\Q}/\Q)$ under $\emptyset'$.  Then for each of these groups we will perform the computation of Proposition \ref{CheckIsec}.  The final judgment on the randomness of $G$ is determined by whether, for every group enumerated, we have an empty intersection, each of which can be done by $\Gamma(G)''$, so that the full computation can be completed under $\Gamma(G)'''$, as required.\end{proof}


\bibliographystyle{plain}%
\bibliography{RandomFields}
\end{document}